\definecolor{darkblue}{rgb}{0,0,1}
\numberwithin{equation}{section}
\newtheorem{proposition}{Proposition}
\newtheorem{theorem}[proposition]{Theorem}
\newtheorem{lemma}[proposition]{Lemma}
\theoremstyle{remark}
\newtheorem{corollary}[proposition]{Corollary}
\newtheorem{remark}[proposition]{Remark}
\newtheorem{example}[proposition]{Example}
\theoremstyle{definition}
\newtheorem{definition}[proposition]{Definition}
\newtheorem{notation}[proposition]{Notation}
\newcommand{\re}{{\rm Re}}
\newcommand{\im}{{\rm Im}}
\newcommand{\ZZZ}{\mathbbm Z}
\newcommand{\RRR}{\mathbbm R}
\newcommand{\CCC}{\mathbbm C}
\newcommand{\HHH}{\mathbbm H}
\newcommand{\X}{\mathfrak{X}}
\newcommand{\e}{\mathfrak{e}}
\renewcommand{\o}{\mathfrak{o}}
\renewcommand{\a}{{\rm a}}
\newcommand{\cl}{{\rm C\ell}}
\definecolor{grau}{rgb}{0.97,0.97,0.97}
\definecolor{hellgrau}{rgb}{0.90,0.90,0.90}
\newcommand{\grau}{\textcolor{grau}}
\newcommand{\hellgrau}{\textcolor{hellgrau}}
\begin{document}

\sloppy \raggedbottom

\title{Supersymmetric Killing Structures}

\begin{start}
	\author{Frank Klinker\\}
	\\

\noindent
\makebox[0.8\textwidth]{%
\begin{minipage}{0.86\textwidth}
\begin{Abstract}
In this text we combine the notions of supergeometry and supersymmetry. 
We construct a special class of supermanifolds whose reduced manifolds are  (pseudo) Riemannian manifolds. 
These supermanifolds allow us to treat vector fields on the one hand and spinor fields on the other hand as equivalent geometric objects. 
This is the starting point of our definition of {supersymmetric Killing structures}. 
The latter combines subspaces of vector fields and spinor fields, provided they fulfill certain field equations. 
This naturally leads to a superalgebra that extends the supersymmetry algebra to the case of non-flat reduced space.  
We examine in  detail the additional terms that enter into this structure and  we give a lot of examples.
\\[1ex]
{\bf 2000 MSC:} 17B66, 17B70, 53C27, 53C20, 58A50.
\\[1ex]
{\bf Keywords:} Supersymmetry $\cdot$ Supermanifold $\cdot$ Special holonomy.
	\end{Abstract}
\end{minipage}}

\let\thefootnote\relax\footnote{
\hspace*{-1.4em}Faculty of Mathematics, TU Dortmund University,  44221 Dortmund, Germany\\[1ex]
\hspace*{1.em}E-mail address: \href{mailto:frank.klinker@math.tu-dortmund.de}{frank.klinker@math.tu-dortmund.de}\\[1ex]
\hspace*{1.em}This is the preprint version of {\em Comm.~Math.~Phys.}, {\bf 255} (2005), no.2, 419-467

\href{http://dx.doi.org/10.1007/s00220-004-1277-2}{DOI 10.1007/s00220-004-1277-2}
} 

\renewcommand{\dateseparator}{-}
\end{start}

\setcounter{tocdepth}{2}

\runningheads{%
\small\sf Preprint 
\hspace*{8.2cm}
\sf Supersymmetric Killing Structures
}{%
\small\sf Preprint
\hspace*{10.9cm}   
\sf Frank Klinker
}

\section*{Introduction}

One of the key ingredients of this work is the concept of supersymmetry. This topic became very famous in the 1970s in the context of the investigation of supergravity theories. In the early 1980s  supersymmetry was combined with string theory to gain superstring theory. There is a vast amount of literature on these topics of which we will cordially refer the reader to the standard books \cite{WessBagger} and  \cite{GreenSchwarzWitten}. 
From the mathematical point of view the investigation of supersymmetry is interesting because of at least two reasons. 
The first reason is that the notion of super Lie algebra naturally enters and therefore leads to an extension of the classical Lie theory. A classification of simple super Lie algebras is given in \cite{ScheunertNahm1} and \cite{ScheunertNahm2}, for example. 
From the very nature, supersymmetry is related to spin geometry. First constructions were given by Wess and Zumino in \cite{WessZumino2} and the universality of this construction was proven by Haag, {\L}opusza{\'n}ski, and Sohnius in \cite{HaagLopusSohn}.
The second reason is the occurrence of vector fields and spinor fields in this context, in particular the fact that both are treated on the same level, namely as infinitesimal transformations. 
If we consider  a supersymmetric field theory on a curved manifold, one part of the even transformations of the field configuration is  formed by vector fields on the manifold. 
One part of the odd transformations is formed by spinor fields. 
If we claim that these transformations are the generators of a certain superalgebra, we end up with vector fields  and  spinor fields that obey certain differential equations, the so called Killing equations. 
This discussion naturally leads to the question whether we can construct superalgebras by considering certain vector fields and spinor fields that solve such Killing equations. 
This is done by considering the subset of these Killing objects within the direct sum of both spaces and imposing an algebra structure. The superalgebra structure is usually achieved by defining the brackets between the certain objects in an appropriate but, more or less, ad hoc way. In \cite{Habermann1} and \cite{Fig1} the bracket  between two vector fields is the usual bracket, between two spinors it is defined to be the supersymmetry bracket and between a vector field and a spinor it is the Lie derivative of the spinor. This ad hoc definition forces us to check in each case whether or not the Jacobi Identity is fulfilled. This problem has been noticed in \cite{Habermann1} to give a negative answer to the question if the imaginary Killing spinors are the odd part of a superalgebra in general.
\\
There are several ways to take into account the concept of anticommutativity and put it in a geometric setting. 
One way is the notion of superspace. By this we mean a free graded module over a superalgebra rather than a vector space, i.e.\ the coefficients of the space are elements in a superalgebra rather than a field. 
There are some attempts to formulate gravity theories on such spaces (see \cite{WessZumino1} or \cite{BriGellRamSchw1}). 
Furthermore, flat superspace is the model space for the construction of  supermanifolds in the context of \cite{deWittBook}, \cite{Rogers1} 
or \cite{Leites1} and their investigation in e.g.\ \cite{Yagi2} or \cite{BruPes1}. 
A second main geometric way to introduce anticommuting elements  is the concept of graded manifold, see \cite{Kostant}. 
In contrast to the above construction -- where the coefficients of the model space are replaced -- in the case of graded manifolds the sheaf of functions on a manifold is replaced by a sheaf of commutative superalgebras. 
In fact, as proven in \cite{Rogers5}, the concept of deWitt manifolds is equivalent to the  concept of graded manifolds. In \cite{Rothstein} we find an axiomatic approach that covers the different kinds of supermanifolds. 
M.\ Batchelor showed in \cite{Bat1} that graded manifolds can be characterized by a manifold and a vector bundle over this manifold. 
Combining this with the description of  vector fields on the graded manifold we see that the latter are given by the vector fields on the base manifold and the sections in the vector bundle. 
This is another way to bring vector fields and  spinors together. To treat them in this way  has the advantage that we get a priori a graded Lie algebra structure on the direct sum of vector fields and spinors by the bracket of vector fields on the graded manifold.  
This structure is used in \cite{AlCorDevSem} to define  infinitesimal automorphisms  on a supermanifold that leads to  the proposition that the  twistor spinors on a Riemannian manifold are exactly the odd Killing fields on an associated supermanifold. 
One difference between the  result presented in our text to the result in \cite{AlCorDevSem} is that in our construction the bracket of two odd Killing fields does not always vanish, which means that we generate a nonvanishing motion on the base space. 

We present an ansatz for a geometric structure on a graded manifold leading to a superalgebra that recovers in lowest order the classical supersymmetry algebra. In the flat case we  recover the latter exactly. Moreover, the supersymmetric Killing structure admits an ideal, the center, whose dependence on the Killing fields is of purely algebraic nature. In general, this center turns out to be infinite. Nevertheless it is finite in the case of special  base manifolds. Although the center is infinite in the general case, we show that it admits a uniform shape. Roughly speaking, it consists of two types of elements.

\section{Special graded manifolds}

\subsection{Graded manifolds}

A {\sc graded manifold} is a locally  ringed space 
$\hat{M}=(M,\mathfrak{A})$ consisting  of a $D$ dimensional manifold $M$ and a sheaf $\mathfrak{A}=\mathfrak{A}_{0}\oplus \mathfrak{A}_{1}$  of commutative superalgebras with the following properties: 
\begin{itemize}[leftmargin=2em]
\item The reduced space  $M_{\rm red}$, which is the locally ringed space with sheaf $\mathcal{C}:=\mathfrak{A}/(\mathfrak{A}_1+(\mathfrak{A}_1)^2)$, is isomorphic to an ordinary manifold.
\item $\hat M$ is locally decomposable, i.e.\ for all  $x\in M$ there exists a neighborhood  $U$ of $x$  such that  $(U,\mathfrak{A}|_U)$ is isomorphic to $(M_{\rm red} ,\Lambda_{\mathcal{C}_{U}}\mathcal{E})$ where $\mathcal{E}$ is a rank-$r$ locally free sheaf  over $U$ of  $\mathcal{C} _{U}$ modules.
\end{itemize} 
A morphism between two locally ringed spaces is a pair of maps $(f,f^*):(M,\mathfrak{A})\to(M',\mathfrak{A}')$ with a map $f:M\to M'$ and a local morphism $f^*:\mathfrak{A}'\to f_*\mathfrak{A}$ of sheaves. The morphism is an isomorphism if $f$ is a diffeomorphism and $f^*$ an isomorphism. The pair $(D,r)$ is called the dimension of the graded manifold $\hat{M}$ and we identify the sheaf $\mathcal{C}$ with the sheaf of functions on $M$.
\begin{example}
Let $E\to M$ be a vector bundle. Denote the sheaf of sections of  the exterior bundle by $\Gamma \Lambda E$. Then $(M,\Gamma \Lambda E)$  is a graded manifold of  dimension $(\dim M, {\rm rank}E)$
\end{example}
This example is universal due to the following remarkable theorem.
\begin{theorem}[M.\ Batchelor \cite{Bat1}]
Every graded manifold over a manifold $M$ is isomorphic to $(M,\Gamma\Lambda E)$ for some vector bundle $E\to M$.
\end{theorem}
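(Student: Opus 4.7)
The plan is to extract a canonical candidate vector bundle $E\to M$ from the sheaf $\mathfrak{A}$ via its nilpotent filtration, and then glue local splittings into a global isomorphism $\mathfrak{A}\cong\Gamma\Lambda E$ by a \v{C}ech argument along that filtration combined with partitions of unity.

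First I would construct $E$ intrinsically. Set $\mathcal{J} = \mathfrak{A}_1 + \mathfrak{A}_1^2 \subset \mathfrak{A}$, so that $\mathfrak{A}/\mathcal{J} = \mathcal{C}$ by the definition of the reduced space. Reading off the local model $\mathfrak{A}|_U\cong\Lambda_{\mathcal{C}_U}\mathcal{E}$, the ideal $\mathcal{J}|_U$ coincides with the part of positive exterior degree and $\mathcal{J}^k|_U$ with the part of degree $\geq k$, so $\mathcal{J}^k/\mathcal{J}^{k+1}\cong\Lambda^k\mathcal{E}$ and $\mathcal{J}^{r+1}=0$. In particular $\mathcal{J}/\mathcal{J}^2$ is a globally well-defined locally free $\mathcal{C}$-module of rank $r$; let $E$ be the vector bundle with $\Gamma E=\mathcal{J}/\mathcal{J}^2$. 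The theorem then becomes equivalent to the statement that the canonical projection onto the associated graded, $\mathfrak{A}\twoheadrightarrow \bigoplus_k\mathcal{J}^k/\mathcal{J}^{k+1}\cong\Gamma\Lambda E$, admits a splitting as sheaves of $\ZZZ_2$-graded $\mathcal{C}$-algebras.

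I would produce this splitting by induction on the filtration. Choose a good cover $\{U_\alpha\}$ on which local isomorphisms $\varphi_\alpha:\Gamma\Lambda E|_{U_\alpha}\to\mathfrak{A}|_{U_\alpha}$ exist; using the canonical identification $\Gamma E = \mathcal{J}/\mathcal{J}^2$ one can arrange that each $\varphi_\alpha$ induces the identity on $\mathrm{gr}$. The transitions $g_{\alpha\beta}=\varphi_\alpha^{-1}\varphi_\beta$ then lie in the sheaf of groups $\mathrm{Aut}_1$ consisting of algebra automorphisms of $\Gamma\Lambda E$ congruent to $\id$ modulo $\mathcal{J}$. This sheaf carries a decreasing filtration by subsheaves $\mathrm{Aut}_k$ of automorphisms equal to $\id$ modulo $\mathcal{J}^k$, terminating at $\mathrm{Aut}_{r+1}=\{\id\}$. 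Each successive quotient $\mathrm{Aut}_k/\mathrm{Aut}_{k+1}$ is abelian and canonically identified with a sheaf of $\mathcal{C}$-linear maps, concretely a direct sum of sheaves of the form $\mathrm{Hom}_{\mathcal{C}}(\Gamma\Lambda^\bullet E,\Gamma\Lambda^{\bullet+k}E)$ compatible with the $\ZZZ_2$-grading. Assuming inductively that $\{g_{\alpha\beta}\}$ already takes values in $\mathrm{Aut}_k$, its image in the abelian quotient is a \v{C}ech $1$-cocycle with values in a sheaf of $\mathcal{C}$-modules, hence a fine sheaf, and is therefore a coboundary $\delta(h_\alpha^{(k)})$; replacing $\varphi_\alpha$ by $\varphi_\alpha\circ h_\alpha^{(k)}$ pushes the cocycle into $\mathrm{Aut}_{k+1}$. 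After at most $r$ steps the cocycle becomes trivial and the modified $\varphi_\alpha$ glue to the required global isomorphism.

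The main obstacle is the non-abelianness of $\mathrm{Aut}_1$: one cannot kill the whole obstruction at once but must proceed order by order, and at every step the vanishing $\check H^1(M,\mathrm{Aut}_k/\mathrm{Aut}_{k+1})=0$ has to be verified. This is precisely where the smoothness of $M$ enters, through the existence of $\mathcal{C}^\infty$ partitions of unity which make every $\mathcal{C}$-module sheaf fine; it is well known that the analogous statement fails in the holomorphic or algebraic category, where the \v{C}ech cohomology of the corresponding automorphism sheaves need not vanish.
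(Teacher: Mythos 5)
The paper does not prove this statement at all --- it is quoted verbatim from Batchelor's article \cite{Bat1} and used as a black box --- so there is no internal proof to compare against. Your proposal is, in substance, the standard proof (essentially Batchelor's own): extract $E$ from $\mathcal{J}/\mathcal{J}^2$ with $\mathcal{J}=\mathfrak{A}_1+\mathfrak{A}_1^2$, normalize the local trivializations so that they induce the identity on the associated graded, and kill the resulting non-abelian \v{C}ech cocycle step by step along the $\mathcal{J}$-adic filtration, using that each abelian quotient of the automorphism sheaf is a fine sheaf of $\mathcal{C}$-modules and hence has vanishing $\check H^1$. The outline is correct, and you correctly identify where smoothness of $M$ enters and why the holomorphic analogue fails. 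One point deserves more care than your phrasing suggests: the local isomorphisms are isomorphisms of locally ringed spaces, so the transition automorphisms are $\RRR$-algebra automorphisms of $\Gamma\Lambda E$ that need not be $\mathcal{C}$-linear (they may move the functions in $\mathcal{C}\subset\Gamma\Lambda E$). The correct identification of $\mathrm{Aut}_k/\mathrm{Aut}_{k+1}$ is therefore with degree-$(+k)$ \emph{derivations} of $\Gamma\Lambda E$, i.e.\ with $\mathfrak{X}(M)\otimes\Gamma\Lambda^kE\,\oplus\,\Gamma E^*\otimes\Gamma\Lambda^{k+1}E$ (compare the decomposition \eqref{isom} in the paper), rather than with $\mathcal{C}$-linear Hom sheaves; only the even $k$ contribute because the automorphisms preserve parity. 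Since these derivation sheaves are still $\mathcal{C}$-modules, they are fine and the cohomological vanishing you invoke goes through unchanged, so the argument closes.
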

The sections in $\mathfrak{A}$ are called {\sc functions} on the graded manifold. If we choose local coordinates $(x_i)_{1\leq i\leq D}$ of $M$ and a local frame $(\theta_\alpha)_{1\leq\alpha\leq r}$ of $E$ we write the functions in the usual way
\begin{equation}
f(x_i,\theta_\alpha)=\sum_{a\in\ZZZ^r_2}f_{a}(x_i)\theta_1^{a_1}\cdots\theta_r^{a_r}.
\end{equation}
We call $(x_i,\theta_\alpha)$ a local coordinate system of $\hat M$.


On the graded manifold $\hat M$ we consider the sheaf of derivations, i.e.
\begin{equation}
\mathfrak{X}(\hat M)= \left\{X:\Gamma\Lambda E\to\Gamma\Lambda E\
 \left| \begin{array}{l}\text{X is }\RRR\text{-linear and }\\
  X(fg)=X(f)g+(-)^{Xf}fX(g)\\
  \text{for homogeneous }X,f,g\end{array} \right.\right\}\,.
\end{equation}
There exist canonical inclusions $\jmath:\X(M)\to \X(\hat M)$ and $\jmath:\Gamma E^*\to \X(\hat M)$ which lead to the isomorphism 
\begin{equation}
\X(\hat M) \cong \Gamma\Lambda E \otimes \mathfrak{X}(M) \oplus \Gamma \Lambda E  \otimes \Gamma E^*\label{isom}
\end{equation}
see \cite{Kostant}. 
In \eqref{isom} we denote the first and the second summand by $\X^v$ and $\X^s$ respectively and call the elements v-like and  s-like. The two summands we get by the splitting of $\X(\hat M)$ with respect to the natural $\ZZZ_2$-grading are denoted by $\X_0$ and $\X_1$. 
A local frame of $\hat M$ will  be a local frame of $\X(M)\oplus\Gamma E^*$, i.e.\ $\{e_i,\theta^\alpha\}$ where the two parts span the two summands.


\subsection{Special graded manifolds}

We consider a (pseudo) Riemannian spin manifold $M$ of dimension $D=s+t$ and signature $\sigma=t-s$ with spinor bundle $S$. The spinor bundle is an associated vector bundle\footnote{
We denote the bundles and the standard fibres by the same symbol.}
$S=P_{Spin(s,t)}\times_\rho S$ where the typical fibre $S$ is a spinor representation of $Spin(s,t)\subset\cl_{s,t}$. This can be a real or a complex space, see Appendix \ref{secSpingeo}. 
We sometimes add the label $\CCC$ or $\RRR$ if we want to stress the kind of fibre. $P_{Spin}$ denotes the spin  structure of $M$. This is a $Spin$-principal bundle over $M$ with a double cover $\hat\lambda$ of the $SO$-principal bundle $P_{SO}$ such that the diagram on the right  commutes.

\begin{wrapfigure}[9]{r}{185pt}
\vspace*{-10ex}
$\qquad {\xymatrix  @+0pc @ur{
&P_{Spin}\times Spin\ar[dl]_-{\hat\lambda\times\lambda}\ar[dr]^-{R}& \\
P_{SO}\times SO \ar[dr]^-{R}&&P_{Spin}\ar[dl]^-{\hat\lambda}\ar[d] \\
 &P_{SO} \ar[r]& M
}}
$
\end{wrapfigure}

The typical fibre comes with a $\mathfrak{spin}$-invariant bilinear form $C$ which may give rise to supersymmetry, depending on the dimension and the signature of $M$, see Appendix \ref{secSUSY}. The bilinear form on the standard fibre extends to a bilinear form on the bundle. 
The Levi-Civita connection $\nabla$ on $M$ induces on $S$ (and on $\Lambda S$) a connection, also denoted by $\nabla$, that respects  the Clifford multiplication 
\begin{equation}
\nabla(X\phi)=(\nabla X)\phi+X\nabla\phi.
\end{equation}
and the charge conjugation $C$
\begin{equation}
\nabla C(\phi,\psi)= C(\nabla\phi,\psi)+C(\phi,\nabla\psi).
\end{equation}
The above considerations and the constructions from the last subsection lead to the next definition which provides us with  the basic notion of manifold we need in our following constructions.

\begin{definition}
A {\sc special graded manifold}  (SGM) is a graded manifold $\hat M=(M,\mathfrak{A})$ together with a map $C$ where
\begin{itemize}[leftmargin=2em]
\item $M$ is a (pseudo) Riemannian spin manifold,
\item the sections in the sheaf $\mathfrak{A}$ are sections in the exterior bundle of a spinor bundle $S$, and
\item $C$ is the charge conjugation on $S$ and gives rise to supersymmetry.
\end{itemize}
We call the SGM {\sc real} or {\sc complex} if the structure sheaf $\mathfrak{A}$ comes from a real or  complex spinor bundle, respectively.
\end{definition}
If we consider real SGM the form of the typical fibre and the bilinear form $C$ depends on whether the representation of the $Spin$ group is real, complex or quaternionic. The next remark follows immediately from theorems \ref{whencomplex} and \ref{whenreal}.
\begin{remark}\label{whenSGM}
The base manifold of a  real special graded manifold has dimension and signature given in Theorem \ref{whenreal}, and 
the base of a complex special graded manifold is of dimension $D\leq 4$ mod 8, see Theorem \ref{whencomplex}.
\end{remark}
The charge conjugation yields the isomorphism
\begin{equation}
\mathfrak{X}(\hat M)\simeq 
\Gamma\Lambda S \otimes \mathfrak{X}(M) \oplus \Gamma\Lambda S \otimes \Gamma S \label{IsomorphFields}.
\end{equation}
In our ansatz we use an inclusion that has been proposed by \cite{AlCorDevSem}.  It is  given by
\begin{equation}\begin{aligned}
\jmath& :\X(M)\oplus\Gamma S\hookrightarrow\X(\hat M)\,,\\
\jmath& :\ X\ \mapsto\  \nabla_X\,, \\
\jmath& :\ \phi\ \mapsto\  \phi\rfloor \,,
\end{aligned}\label{iso2}\end{equation}
where $\nabla$ is the Levi-Civita connection on $M$ and the interior multiplication is defined via the charge conjugation, i.e.\ 
\begin{enumerate}[leftmargin=2.5em]
\item[(1)] $\phi\rfloor f_0=0$ for $f_0\in C^\infty(M)\subset\Gamma\Lambda S$.
\item[(2)] $\phi\rfloor\psi=C(\phi,\psi) $
for $\psi\in\Gamma S\subset\Gamma\Lambda S$.
\item[(3)] $\phi\rfloor\left(\omega\wedge\tau\right)
 =\left(\phi\rfloor\omega\right)\wedge\tau+(-)^k\omega\wedge\left(\phi\rfloor\tau\right)$
 for $\omega\in\Gamma\Lambda^kS\subset\Gamma\Lambda S,\tau\in\Gamma\Lambda S$.
\end{enumerate}

\begin{proposition}\label{basiccomm}
For $X,Y\in\X(M)$ and $\phi,\psi\in\Gamma S$ the brackets are given by
\begin{align}
\big[\jmath(X),\jmath(Y)\big]&= \nabla_X\circ\nabla_Y-\nabla_Y\circ\nabla_X
  =  R^{\Lambda S}(X,Y)+\jmath([X,Y]_0)\\
\big[\jmath(X),\jmath(\phi)\big]&=
  \nabla_X\circ\phi\rfloor-\phi\rfloor\circ\nabla_X
  =  \jmath(\nabla_X\phi)\\
\big[\jmath(\phi),\jmath(\psi)\big]&=\phi\rfloor\circ\psi\rfloor+\psi\rfloor\circ\phi\rfloor
  = 0
\end{align}
where $R$ is the curvature of $\nabla$ acting on the bundle added as a subscript, and $[.,.]_0$ denotes the usual commutator of vector fields on $M$.\footnote{We will drop the $\jmath$ in our notation if there is no danger of confusion.}
Furthermore we have the graded Jacobi identity
\begin{equation}
(-)^{XZ}\left[[X,Y],Z]\right]+(-)^{ZY}\left[[Z,X],Y]\right]+(-)^{XY}\left[[Y,Z],X]\right]=0
\end{equation}
for all $X,Y,Z\in\X(\hat M)$. 
\end{proposition}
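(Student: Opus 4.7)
The plan is to verify each of the three brackets separately by reducing to action on generators of the sheaf $\Gamma\Lambda S$, and then dispatch the graded Jacobi identity by an abstract argument.

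For the first bracket, note that $\jmath(X)=\nabla_X$ is the extension of the Levi-Civita connection to the exterior algebra bundle $\Lambda S$, so the statement is just the standard curvature identity on that bundle. Writing $\nabla_X\nabla_Y - \nabla_Y\nabla_X$ and expanding on a section via the Leibniz rule gives the curvature term $R^{\Lambda S}(X,Y)$ plus the contribution of the ordinary commutator $[X,Y]_0$ on $M$, which is recovered under $\jmath$.

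For the second bracket, observe that both $\nabla_X$ and $\phi\rfloor$ are derivations of $\Gamma\Lambda S$ (the former of even degree, the latter of odd degree), so their graded commutator $\nabla_X\circ\phi\rfloor - \phi\rfloor\circ\nabla_X$ is again an odd derivation. Similarly $\jmath(\nabla_X\phi)=(\nabla_X\phi)\rfloor$ is an odd derivation, so it suffices to check equality on generators. On $f\in C^\infty(M)$ both sides give $0$ by defining property (1), and on $\psi\in\Gamma S$ the compatibility $\nabla_X C(\phi,\psi)=C(\nabla_X\phi,\psi)+C(\phi,\nabla_X\psi)$ of the connection with the charge conjugation, combined with property (2), gives exactly $C(\nabla_X\phi,\psi)=(\nabla_X\phi)\rfloor\psi$. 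By the derivation property the equality extends to all of $\Gamma\Lambda S$.

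For the third bracket, the same strategy applies: the graded anticommutator of two odd derivations is an even derivation, so I check $\phi\rfloor\circ\psi\rfloor + \psi\rfloor\circ\phi\rfloor$ on generators. On $f\in C^\infty(M)$ this is $0$ by (1), and on a spinor $\chi\in\Gamma S$ we get $\phi\rfloor C(\psi,\chi)+\psi\rfloor C(\phi,\chi)=0$ since each interior product annihilates the function $C(\cdot,\cdot)$ by (1). Being a derivation vanishing on generators, the anticommutator vanishes identically.

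Finally, the graded Jacobi identity requires no separate verification: elements of $\X(\hat M)$ are $\RRR$-linear endomorphisms of $\Gamma\Lambda S$, and in the associative superalgebra $\mathrm{End}_\RRR(\Gamma\Lambda S)$ the graded commutator automatically satisfies the graded Jacobi identity (this is a purely algebraic identity obtained by expanding all brackets and collecting terms with the correct signs). The main conceptual point — and the only place where geometric input is genuinely used — is the compatibility $\nabla C=0$ in the verification of the mixed bracket; everything else is a routine derivation-on-generators argument.
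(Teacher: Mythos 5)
Your proof is correct and follows essentially the same route as the paper: the three brackets are verified on generators using the defining properties of the interior product, the definition of curvature, and $\nabla C=0$, while the graded Jacobi identity is dispatched by the general algebraic fact that the graded commutator on $\mathrm{End}_\RRR(\Gamma\Lambda S)$ satisfies it and derivations are closed under it. The paper's only addition is a remark (not a gap in your argument) that unpacking the identity against the computed brackets implicitly encodes $\left[R^{\Lambda S}(X,Y),\phi\right]=R^S(X,Y)\phi$ and the second Bianchi identity.
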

\begin{proof}
The third bracket is a property of the interior derivation. The first bracket is the definition of the curvature, and the second bracket is a consequence of $C$ being invariant under the Levi-Civita connection.

The identity for the cyclic sum is valid for every vector field. But it should be mentioned that in our case the identity does not only follow from the same identity for the bracket $[.,.]_0$  on $M$, but that we also need the following two facts: 
\begin{itemize}[leftmargin=2em]
\item $\left[R^{\Lambda S}(X,Y),\phi\right]=R^S(X,Y)\phi$.
\item The $2^{\rm nd}$ Bianchi identity for the curvature $R$, i.e.\ $\nabla_{[i}R_{jk]}+R_{n[i}T^{n}_{jk]}=0$ where $T$ denotes the torsion of $\nabla$.
\end{itemize}
\end{proof}
The $\Gamma\Lambda S$-linear extension of this inclusion leads to the following splitting of vector fields and action on functions:
\begin{equation}
\left(f\otimes X+g\otimes\theta\right)(h) = f\wedge\nabla_Xh+g\wedge\theta\rfloor h \label{iso}
\end{equation}
for $f,g,h\in \Gamma\Lambda S$, $X\in\X(M)$ and $\theta\in\Gamma S$.

We describe the sheaf of functions $\mathfrak{A}$ of the real special graded manifold in terms of the complex spinors, see Appendix \ref{realspinorsapp}. The product structure on the real functions agrees with the one on the complex functions in the case of  real and complex representations, but differs in the case of quaternionic representation.
\begin{itemize}[leftmargin=2em]
\item  {\it The real case}.\ \

We have $S\subset S_\CCC$ and the product in $\Gamma\Lambda S$  is given by  the restriction of the wedge product in $\Gamma \Lambda S_\CCC$. The evaluation $\Gamma S\times \Gamma S\to C^\infty M$ is given by the bilinear form $C$.

A local basis  $\{\varphi_\alpha\}_{1\leq \alpha\leq r}$ of $S_\CCC$  gives rise to a local real basis of $S$ if we consider the set\footnote{In fact only one half of these $2r$ vectors are linearly independent.}  $\left\{\frac{1}{2}(\varphi_\alpha+\tau(\varphi_\alpha)),\frac{1}{2i}(\varphi_\beta-\tau(\varphi_\beta))\right\}_{1\leq\alpha,\beta\leq r}$.

\item  {\it The complex case}.\ \

The story is the same as in the real case. An additional feature of the complex case is the fact that we have the chiral structure not compatible with the the real spin structure but reversed by the conjugation.

For a complex basis $\left\{\varphi^+_\alpha\right\}_{1\leq\alpha\leq\frac{r}{2}}$ of $S_\CCC^+$ our preferred choice of local basis of $S$ is  $\left\{(\varphi_\alpha^+,\varphi_\alpha^-),(i\varphi_\beta^+,-i\varphi_\beta^-)\right\}_{1\leq\alpha,\beta\leq \frac{r}{2}}$ with $\varphi^-_\alpha:=\tau(\varphi_\alpha^+)\in S^-_\CCC$.

\item {\it The quaternionic case}.\ \

We have $S\subset S_\CCC\oplus S_\CCC $ with charge conjugation given by Remark \ref{bilinear}, i.e.\ $C({\rm H},\Xi)=\re(\bar\eta\xi)$ or $=\im(\bar\eta\xi)$, respectively. 

Let $\{\varphi_\alpha\}_{1\leq\alpha\leq r}$ be a local complex basis of  $S_\CCC$. A real basis of $S$ is then given by 
$\{ \Phi_\alpha=(\varphi_\alpha,\varphi_\alpha^C), \Phi_{\beta+r}=(i\varphi_\beta,-i\varphi_\beta^C)\}_{1\leq\alpha,\beta\leq r}$. We will also use the more common notation $\{\phi_\alpha=(\varphi_\alpha,0),\phi_{\bar \alpha}=(0,\varphi_\alpha^C)\}$  which is related to the additional $R$-symmetry. 
The relation between these notations is
\begin{equation}
\phi_\alpha=\frac{1}{2}(\Phi_\alpha-i\Phi_{\alpha+r}) \ \text{ and }\
\phi_{\bar\alpha}=\frac{1}{2}(\Phi_\alpha+i\Phi_{\alpha+r})\label{adaptedbasis}
\end{equation}
i.e.\ $\Psi= a^\alpha\Phi_\alpha+b^\alpha\Phi_{\alpha+r}
=z^\alpha\phi_\alpha+\bar z^\alpha\phi_{\bar\alpha}$ with $z^\alpha=a^\alpha+i b^\alpha$.
\end{itemize}


\subsection{Natural bilinear forms on special graded manifolds}

The two key ingredients for the vector fields on $\hat M$ are given by the vector fields on $M$ and the sections in $S$. They both come with bilinear maps given by the metric $g$ and the charge conjugation $C$. These two maps give rise to a bilinear form on the SGM. 

A {\sc bilinear form} on the graded manifold $(M,\mathfrak{A})$ is a map
\begin{equation}
b:\X(\hat M)\times\X(\hat M) \to \mathfrak{A}
\end{equation}
with
\begin{itemize}[leftmargin=2em]
\item $b$ is additive in both entries
\item $b$  is linear with respect to the multiplication by even elements in both arguments and with respect to odd elements in the first argument. In the second argument the latter comes with a sign.
\end{itemize}
We call a bilinear form {\sc homogeneous} if for all odd functions $f\in\mathfrak{A}_1$ we have
\begin{equation}
b(X,fY)= (-)^{b+X}f\,b(X,Y)
\end{equation}
with a fixed sign $(-)^b$. This provides the space of bilinear forms with a natural grading and turns it into a $\mathfrak{A}$ module. $b$ is called {even} or  {odd} if this sign is $+$ or $-$, respectively.

A homogeneous bilinear form is called
\begin{itemize}[leftmargin=2em]
\item\label{basic}
 {\sc basic} if for all $X,Y\in \X(M)\oplus\Gamma S$ we have $\pi b(X,Y)=b(X,Y)$
\item\label{susybilinear}
{\sc supersymmetric} if 
\begin{equation}
b(X,Y)=(-)^{b(X +Y)+XY}b(Y,X)
\end{equation}
and {\sc skew supersymmetric}  if
\begin{equation}
b(X,Y)=-(-)^{b(X +Y)+XY}b(Y,X)
\end{equation}
for all $X,Y\in\X$.
\end{itemize}

The metric and the charge conjugation give rise to an even  basic bilinear form $h$ on $\hat M$, called the {\sc metric},  by considering the direct sum $g\oplus C$ and its bilinear extension in the above sense, i.e.\ we define
\begin{equation}
h(\jmath X,\jmath Y)=g(X,Y),\quad
h(\jmath\eta,\jmath\xi)=C(\eta,\xi),\quad
h(\jmath X,\jmath\xi)=0
\end{equation}

\begin{remark}
$h$ is a supersymmetric bilinear form if the charge conjugation $C$ on the spinor bundle is skew-symmetric.
 If $C$ is symmetric $h$ is neither supersymmetric nor skew supersymmetric. 
If we denote by $D$ the  dimension of the base manifold $M$  and by  $r$ the rank of the spinor bundle,
the algebra that preserves this bilinear forms is 
\begin{itemize}[leftmargin=2em]
\item the orthosymplectic algebra $\mathfrak{osp}(D,r)\subset\mathfrak{gl}_{D|r}$ with even part $\mathfrak{osp}(D,r)_0\simeq\mathfrak{so}(D)\times\mathfrak{sp}(\frac{r}{2})$ in the first case, and 
\item the purely even sub algebra $\mathfrak{so}(D)\times\mathfrak{so}(r)$ of $\mathfrak{gl}_{D|r}$ in the second case (see \cite{ScheunertNahm1}).
\end{itemize}
\end{remark}

\begin{definition}
The Lie derivative $\mathcal{L}_X b$ of a bilinear form $b$ in direction $X\in\X$ is defined by
\begin{equation}
\mathcal{L}_Xb(Y,Z):=(-)^{XY}\big(X(b(Y,Z)) -b([X,Y],Z)-(-)^{X(b+Y)}b(Y,[X,Z])\big)\,.
\label{LieMetric}
\end{equation}
\end{definition}
\begin{remark}
The Lie derivative $\mathcal{L}_Xb$ of $b$ in direction $X$ is itself a bilinear form and its grading is given by $|\mathcal{L}_Xb|=|b|+|X|$.
\end{remark}

\section{Fields on special graded manifolds}

\subsection{Fields and spinors}

Some fields\footnote{
From now on we call the vector fields on the supermanifold $\hat M=(M,\mathfrak{A})$ fields. We do this to distinguish them from vector fields on the base manifold $M$.}
on the special graded manifold $\hat M$ arise naturally by considering sections in the spinor bundle $S$ or in the tensor product $S\otimes TM$.  More precisely, we consider  s-like fields of the form $\varphi\in\Gamma S\subset\X^s\cap\X_1$ and  v-like fields of the form $\Phi\in\Gamma S\otimes\X(M)\subset\X^v\cap\X_1$. Therein  the parallel spinors as well as the inclusion of $\Gamma S$ into $\Gamma S\otimes \X(M)$, see \eqref{spinordiagram},  turn out to be of particular interest for our construction.
\begin{equation}\label{spinordiagram}
\xymatrix{
&&&\Gamma S\ar@<0.3ex>[dl]^{\imath}\\
\Gamma S\ar[r]^-{\nabla}&\Gamma S\otimes\X(M)\ar[r]^-{\simeq}&\Gamma S\oplus\Gamma S_{\frac{3}{2}}\quad  \ar@<0.3ex>[ur]^-{\pi_1}\ar[dr]_-{\pi_2}&\\
&&&\Gamma S_{\frac{3}{2}}
}
\end{equation}
Introducing the orthonormal frame $\{e_i\}$ and their images $\{\gamma_i\}$ in the Clifford algebra the projections and the inclusion are respectively given by
\begin{align}
\pi_1(\Phi) & = \gamma^i\Phi(e_i)\,,\nonumber \\
\imath(\varphi) &=-\frac{1}{D} \gamma^k\varphi\otimes e_k\,, \nonumber \\ \intertext{and}
\pi_2(\Phi)&= \Phi-\imath\circ\pi_1(\Phi)=\Phi+\frac{1}{D}\gamma^j\gamma^k\Phi(e_k)\otimes e_j\,, \nonumber 
\end{align}
in particular $\imath(\varphi)(e_j)=-\frac{1}{D}\gamma_j\varphi$, $\pi_1(\varphi^k\otimes e_k)=\gamma^k\varphi_k$,  and $\pi_2(\phi^k\otimes e_k)(e_j)=\frac{D-1}{D}\phi_j + \frac{1}{D}\gamma_{jk}\phi^k $.

\subsection{Fields and endomorphisms}

One summand in the even part of $\X(\hat M)$ which is of special interest is
\begin{equation}
{\rm End}(\Gamma S)\ \simeq\  \Gamma S\otimes\Gamma S^*\ \stackrel{C}{\simeq}\ 
\Gamma S\otimes\Gamma S\ \subset\ \X(\hat M).
\end{equation}
In local coordinates $\{\theta_\alpha\}$ of $S$ the endomorphism $\Phi$ is expressed as
$\Phi\phi=\Phi^\alpha_\beta\phi^\beta\theta_\alpha$ and we get
\begin{equation}
\Phi=\Phi^\alpha_\beta C^{\beta\gamma}\theta_\alpha\otimes\theta_\gamma .\label{localendo}
\end{equation}
The fields in ${\rm End}(\Gamma S)$ act on functions via $\Phi(f)=0$ for $f\in C^\infty(M)\subset\Gamma\Lambda S$,  $\Phi(\vartheta)=  \Phi\vartheta$ for $\vartheta\in\Gamma S\subset\Gamma \Lambda S$,
and  on elements in $\Gamma\Lambda^k S$ for $k\geq 2$ via the natural extension as derivations.

The various brackets that  will be frequently used are given in the next proposition.

\begin{proposition}\label{unique}
Let $\Phi\in\Gamma({\rm End}(S))$.
If we consider $\Phi$ as an even field it has the following properties
\begin{equation}\begin{aligned}
\big[\Phi,\phi\big] 	&= -\Phi^{T_C}\phi &
\big[\Phi,X\big]		&= -\nabla_X\Phi   \\
\big[\Phi_1,\Phi_2\big]	&= \Phi_1\Phi_2-\Phi_2\Phi_1 &&
\end{aligned}\end{equation}
for $\phi\in\Gamma S\subset\X(\hat M)$ and $X\in\X(M)\subset\X(\hat M)$. 
\end{proposition}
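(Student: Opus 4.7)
The plan is to verify each bracket by showing that both sides agree as derivations of the algebra $\Gamma\Lambda S$. Since any derivation is determined by its action on a set of generators, it suffices to check the identities on functions $f\in C^\infty(M)\subset\Gamma\Lambda S$ and on sections $\vartheta\in\Gamma S\subset\Gamma\Lambda S$. In each case the extension to higher exterior degrees follows by the graded Leibniz rule, using that $\Phi$, $\phi\rfloor$, and $\nabla_X$ are derivations of the appropriate parity; once two derivations of matching parity agree on generators they agree everywhere.

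For the first identity, I would compute $[\Phi,\phi](\vartheta)=\Phi(C(\phi,\vartheta))-\phi\rfloor(\Phi\vartheta)$, observe that the first term vanishes because $\Phi$ annihilates $C^\infty(M)$, and rewrite the second as $-C(\phi,\Phi\vartheta)=-C(\Phi^{T_C}\phi,\vartheta)$ by the very definition of the $C$-transpose. The right-hand side equals $(-\Phi^{T_C}\phi)\rfloor\vartheta$, so the two derivations coincide on the spinor generator, and they both annihilate $C^\infty(M)$. Thus they agree throughout $\Gamma\Lambda S$.

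For the second identity I would again evaluate on $\vartheta\in\Gamma S$: the Leibniz rule for $\nabla_X$ together with $\nabla_X(\Phi\vartheta)=(\nabla_X\Phi)\vartheta+\Phi(\nabla_X\vartheta)$ (which is exactly the compatibility of the induced connection with Clifford/endomorphism actions) gives
\[
[\Phi,X]\vartheta=\Phi(\nabla_X\vartheta)-\nabla_X(\Phi\vartheta)=-(\nabla_X\Phi)\vartheta,
\]
and on $f\in C^\infty(M)$ both sides vanish because $\Phi f=0$ and $\nabla_X f=Xf$ is again in $C^\infty(M)$. Hence $[\Phi,X]=-\nabla_X\Phi$ as endomorphism-valued fields. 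The third identity is immediate from the fact that $\Phi_1,\Phi_2$ are even derivations acting as endomorphisms on $\Gamma S$ and as zero on $C^\infty(M)$, so their graded commutator is the ordinary commutator $\Phi_1\Phi_2-\Phi_2\Phi_1$ of endomorphisms, whose extension to $\Gamma\Lambda S$ is again the derivation associated to this composition.

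There is no serious obstacle here; the main care is bookkeeping. The one spot where sign conventions need to be tracked is the first identity, where one must use that $\Phi$ is even (so its graded commutator with the odd interior derivation $\phi\rfloor$ is the ordinary commutator) and that $\phi\rfloor$ acts with the graded sign on wedge products, so that the resulting derivation on $\Gamma\Lambda S$ matches $-(\Phi^{T_C}\phi)\rfloor$ on all higher exterior powers. Once this is checked on $\Gamma S$, the derivation property handles the rest.
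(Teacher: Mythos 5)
Your proof is correct, and since the paper states Proposition \ref{unique} without any proof (it only supplies the definition of $\Phi^{T_C}$ afterwards), your verification-on-generators argument is exactly the natural one, in the same style as the paper's own sketch for Proposition \ref{basiccomm}. The key steps — $\Phi$ annihilating $C^\infty(M)$ so that $\Phi(C(\phi,\vartheta))=0$, the defining relation $C(\phi,\Phi\vartheta)=C(\Phi^{T_C}\phi,\vartheta)$, and the Leibniz rule for $\nabla_X\Phi$ — are all used correctly, and the extension from generators to all of $\Gamma\Lambda S$ via the derivation property is sound.
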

The right hand side of the last bracket is given by the composition of endomorphisms and ${\ }^{T_C}$ denotes the  transpose  with respect to $C$, i.e.\  $C(\Phi^{T_C}\psi,\varphi)=C(\psi,\Phi\varphi)$ and is locally given by $(\Phi^{T_C})^\beta_\alpha= C_{\alpha\gamma}\Phi^\gamma_\sigma C^{\sigma\beta}$.

The next definition is dedicated to another  natural assignment.
\begin{definition}\label{E-endomorphism}
Let $\mathfrak{B}^{(k)}:\Gamma(\otimes^k T^*M)\otimes {\rm End}\Gamma S\otimes \Gamma (\otimes^kS)\to \X(\hat M)$ be given by
\begin{equation}
\mathfrak{B}^{(k)}(A;\xi_1,\ldots,\xi_k)=\gamma^{i_1}\xi_1\wedge\ldots\wedge\gamma^{i_k}\xi_k\owedge A_{i_1\cdots i_k}.
\end{equation}
with $\xi_\alpha\in\Gamma S$ and $A \in \Gamma (\otimes^kT^*M)\otimes {\rm End}(\Gamma S)$. For a function $g\in\Gamma\Lambda S$ this is 
\begin{equation*}
\mathfrak{B}^{(k)}(A;\xi_1,\ldots,\xi_k)(g) \ =\ \gamma^{i_1}\xi_1\wedge\ldots\wedge\gamma^{i_k}\xi_k\wedge A_{i_1\cdots i_k}(g) .
\end{equation*}
\end{definition}
The operation $\owedge$ is always used when we want to emphasize the fact that the field, that  a function  is  multiplied with, is not of order zero (in that case we used $\otimes$) but a field of higher order.

The map $\mathfrak{B}^{(k)}$ is even with respect to the $\ZZZ_2$ grading on the SGM in the sense that we have
\begin{equation}\begin{aligned}
|\mathfrak{B}^{(k)}(A;\xi_1,\ldots,\xi_k)| &= |\mathfrak{B}^{(k)}|+|A|+|\xi_1|+\cdots+|\xi_k| \\
 &= |\xi_1|+\cdots+|\xi_k|
\end{aligned}
\end{equation}
for all $A$ and $\xi_i$ like above. $\mathfrak{B}^{(k)}$ is one of the maps that  will play in important r\^ole in Section \ref{secCenter}. 

A special class of  fields from endomorphisms is given by the endomorphisms which are skew-symmetric with respect to $C$. In this case the bracket with odd zero-order fields and the action  on  functions of order one coincide.
The elements of the form  $\frac{1}{4}A_{ij}\gamma^{ij}$ are of this special type because of the $\mathfrak{spin}$-invariance of $C$. 
For example, the curvature of the Levi-Civita connection is of this kind.

In the next step we associate a field in $\X(\hat M)$ to an endomorphism of the tangent bundle of $M$ --- more precisely, to an element of ${\rm End}\X(M)$.

Therefore let $A\in{\rm End}(\X(M))$ which has  the local form ${A_k}^j=A_{ki}g^{ij}$ with respect to an orthonormal frame.
Consider the assignment
\begin{equation}
A\ \mapsto\ \frac{1}{4}A_{ij}\gamma^i\gamma^j \label{ass}
\end{equation}
Let us denote  by $A_\a$ the endomorphism we get by skew-symmetrizing $A$ with respect to the Riemannian metric on $M$.
In local coordinates we will write 
$A_\a^{ij}=A^{[ij]}=\frac{1}{2}(A^{ij}-A^{ji})$ with $A^{ij}=g^{ik}{A_k}^j$.
This skew-symmetric endomorphism $A_\a$  is mapped in ${\rm End}(\Gamma S)$.
The image is called $A_\a=\frac{1}{4}A_{[ij]}\gamma^{ij}$ too. $A_\a$ is skew-symmetric with respect to $C$.

We see that \eqref{ass} can be written as
\begin{align}
\frac{1}{4}A_{ij}\gamma^i\gamma^j
 & =  \frac{1}{8}\big(A_{ij}\gamma^i\gamma^j+A_{ij}\gamma^i\gamma^j\big)
  =   \frac{1}{8}\big(A_{ij}-A_{ji}\big)\gamma^i\gamma^j-\frac{1}{4}A_{ij}g^{ij} \nonumber\\
 & = \frac{1}{4}A_{[ij]}\gamma^{ij}-\frac{1}{4}A_{ij}g^{ij}
  =  A_\a - \frac{1}{4}{\rm tr}(A)\,.
\label{ass2}
\end{align}

In this section we restricted ourself to some natural constructions of  fields that involve endomorphisms. One can, of course, think of  other and more complicated  assignments. We will come back to this later.

\subsection{Even Killing fields of special graded manifolds}

We define even fields on the special graded manifold from vector fields on the base manifold. This is done by extending the inclusion $\jmath$. A natural definition leads to the conclusion that  the even Killing fields of the SGM are given by the Kiling vector fields on the base manifold. Furthermore we add some comments on conformal vector fields.

Consider the fields given by
\begin{equation}
\X(M)\ni X\ \mapsto\ \mathfrak{G}(X) = X+\Phi(X)\in\X_0
\end{equation}
where $\Phi(X)\in \Gamma S\otimes\Gamma S\subset\X^s\cap \X_0$ for all $X$ .
\begin{proposition}\label{satz20}
For the vector field $\mathfrak{G}(X)=X+\Phi(X)$ we have
\begin{equation}
\big[\mathfrak{G}(X),\mathfrak{G}(Y)\big]= \mathfrak{G}([X,Y]_0)+R(X,Y)+\nabla\Phi(X,Y)+\Phi\wedge\Phi(X,Y)
\end{equation}
with
\[
\nabla\Phi(X,Y):=\nabla_X(\Phi(Y))-\nabla_Y(\Phi(X))-\Phi([X,Y]_0)
\]
and
\[
\Phi\wedge\Phi(X,Y):=\Phi(X)\Phi(Y)-\Phi(Y)\Phi(X).
\]
\end{proposition}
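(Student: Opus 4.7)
The plan is to expand $[\mathfrak{G}(X),\mathfrak{G}(Y)]$ by bilinearity of the graded commutator on $\X(\hat M)$ and then identify each of the four resulting pieces using the bracket formulas already recorded in Propositions \ref{basiccomm} and \ref{unique}. Writing $\mathfrak{G}(X)=\jmath(X)+\Phi(X)$ and $\mathfrak{G}(Y)=\jmath(Y)+\Phi(Y)$, and noting that all four summands are even so every graded commutator reduces to an ordinary one, the bracket splits into $[\jmath(X),\jmath(Y)]+[\jmath(X),\Phi(Y)]+[\Phi(X),\jmath(Y)]+[\Phi(X),\Phi(Y)]$.

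Each piece is then handled in turn. The first bracket is evaluated directly by Proposition \ref{basiccomm} and produces the curvature term $R^{\Lambda S}(X,Y)$ — written as $R(X,Y)$ in the statement, viewing the endomorphism $R^S(X,Y)\in\Gamma(\mathrm{End}\,S)$ as an even field via the embedding \eqref{localendo} — together with $\jmath([X,Y]_0)$. For the two mixed brackets I would use the second identity of Proposition \ref{unique}: since $[\Phi(Y),\jmath(X)]=-\nabla_X\Phi(Y)$, one has $[\jmath(X),\Phi(Y)]=\nabla_X\Phi(Y)$ and, analogously, $[\Phi(X),\jmath(Y)]=-\nabla_Y\Phi(X)$. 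The fourth bracket is delivered by the third identity of Proposition \ref{unique} as the endomorphism commutator $\Phi(X)\Phi(Y)-\Phi(Y)\Phi(X)$.

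The final step is purely a bookkeeping reassembly: add and subtract $\Phi([X,Y]_0)$ so that $\jmath([X,Y]_0)+\Phi([X,Y]_0)$ regroups into $\mathfrak{G}([X,Y]_0)$, the remainder $\nabla_X\Phi(Y)-\nabla_Y\Phi(X)-\Phi([X,Y]_0)$ is exactly the definition of $\nabla\Phi(X,Y)$, and the endomorphism commutator is $\Phi\wedge\Phi(X,Y)$. I do not anticipate a genuine obstacle, since everything reduces to the two preceding propositions; the only point worth verifying carefully is that the $R^{\Lambda S}(X,Y)$ arising from Proposition \ref{basiccomm} is the same even field that the statement calls $R(X,Y)$, i.e.\ the derivation extension of $R^S(X,Y)\in\Gamma(\mathrm{End}\,S)$ to $\Gamma\Lambda S$. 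Once this identification is recorded, the four terms on the right-hand side of the asserted identity match the four blocks produced by the expansion and the proof is complete.
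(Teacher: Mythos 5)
Your proposal is correct and follows essentially the same route as the paper: expand the bracket bilinearly into four pieces, evaluate the first via Proposition \ref{basiccomm} and the mixed and endomorphism brackets via Proposition \ref{unique}, then add and subtract $\Phi([X,Y]_0)$ to regroup into $\mathfrak{G}([X,Y]_0)$ plus the curvature, $\nabla\Phi$, and $\Phi\wedge\Phi$ terms. Your explicit remark identifying $R^{\Lambda S}(X,Y)$ with the even field $R(X,Y)$ is a harmless extra precision the paper leaves implicit.
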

\begin{remark}
For $\Phi$ being a 1-form on  $M$ with values in ${\rm End}S$ the abbreviations given in the proposition are just the usual formulas for bundle-valued forms and its exterior covariant derivative.
\end{remark}
\begin{proof}
We use Proposition \ref{basiccomm} to calculate
\begin{align*}
\big[\mathfrak{G}(X),\mathfrak{G}(Y)\big] 
&= [X,Y]+\big[X,\Phi(Y)\big]+\big[\Phi(X),Y\big]+\big[\Phi(X),\Phi(Y)\big]\\
&=  R(X,Y)+[X,Y]_0+\nabla_X(\Phi(Y))-\nabla_Y(\Phi(X)) +\Phi\wedge\Phi(X,Y)\\
&=  [X,Y]_0+\Phi([X,Y]_0)+ R(X,Y)+\nabla\Phi(X,Y)+\Phi\wedge\Phi(X,Y)\\
&= \mathfrak{G}([X,Y]_0)+ R(X,Y)+\nabla\Phi(X,Y)+\Phi\wedge\Phi(X,Y)
\end{align*}
\end{proof}
\begin{definition}
An even field $W\in\X(\hat M)$ is called an {\sc even Killing field of the SGM} if it satisfies
\begin{equation}
\mathcal{L}_{W}h = 0.
\end{equation}
\end{definition}
We recall the definition of the Lie-derivative $\mathcal{L}$ of the metric $h$:
\[
(-)^{WY}(\mathcal{L}_W h)(Y,Z) = W(h(Y,Z))-h([W,Y],Z)-(-)^{WY}h(Y,[W,Z])
\]
 for all $W,Y,Z\in\X$.
\begin{theorem}\label{7}
Let $X$ be a vector field on the base manifold $M$ and
\begin{equation}
\e_\alpha(X) = X-(\nabla X)_\a+\alpha\,{\rm tr}(\nabla X) \label{evenansatz}
\end{equation}
where $(\nabla X)_\a$ is the image of the skew symmetrized endomorphism $\nabla X$ of $\X(M)$ and ${\rm tr}$ denotes its trace, see Construction \eqref{ass2}. Then
$
\e_\alpha(X)$  is an even Killing field on the SGM $\hat M$ if and only if
$X$   is a  Killing vector field on $M$.

Furthermore, for two even Killing fields we have
\begin{equation*}
\big[\e_\alpha(X),\e_\alpha(Y)\big] = \e_\alpha([X,Y]_0).
\end{equation*}
\end{theorem}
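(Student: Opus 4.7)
The plan is to recognize $\e_\alpha(X)$ as an instance of the field $\mathfrak{G}(X)=X+\Phi(X)$ from Proposition \ref{satz20}, with the endomorphism-valued even field $\Phi(X)=-(\nabla X)_\a+\alpha\,{\rm tr}(\nabla X)\in\Gamma({\rm End}\,S)\subset\X_0$. All bracket computations then reduce to Propositions \ref{basiccomm} and \ref{unique}, and both assertions boil down to identities in ${\rm End}(TM)$ that lift to ${\rm End}(\Gamma S)$ through the Clifford assignment $A\mapsto A_\a$ of \eqref{ass2}.

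For the ``if and only if'' direction, I would verify $\mathcal{L}_{\e_\alpha(X)}h=0$ by evaluating on pairs of generators $Y,Z\in\X(M)\oplus\Gamma S$, which suffices since $h$ is basic. On two vector fields $Y_0,Z_0\in\X(M)$, the endomorphism contributions $R^{\Lambda S}(X,Y_0)$ and $\nabla_{Y_0}\Phi(X)$ to $[\e_\alpha(X),Y_0]$ are s-like and hence pair trivially with $Z_0$ via $h$; only $[X,Y_0]_0$ survives, so the computation collapses to $(\mathcal{L}_X g)(Y_0,Z_0)$, vanishing if and only if $X$ is Killing, which already gives the ``only if'' direction. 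On two spinors $\phi,\psi\in\Gamma S$, Proposition \ref{unique} yields $[\e_\alpha(X),\phi]=\nabla_X\phi-(\nabla X)_\a\phi-\alpha\,{\rm tr}(\nabla X)\phi$; when summing $h([\e_\alpha(X),\phi],\psi)+h(\phi,[\e_\alpha(X),\psi])$ the $(\nabla X)_\a$-terms cancel by $C$-skew-symmetry of the spin lift, the $\nabla_X$-terms recombine into $X(C(\phi,\psi))$ via $\nabla C=0$, and only $2\alpha\,{\rm tr}(\nabla X)\,C(\phi,\psi)$ remains, which vanishes because Killing vectors are divergence-free. On a mixed pair $(Y_0,\phi)$, after the identification $h(\Psi,\phi)=\Psi\phi$ that follows from the local expression \eqref{localendo} of an endomorphism field, the Lie derivative reduces to $-R^S(X,Y_0)\phi-\nabla_{Y_0}((\nabla X)_\a)\phi$, which is the Clifford image of the Kostant identity $\nabla_V(\nabla X)=R(V,X)$ valid for any Killing $X$.

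For the bracket identity, Proposition \ref{satz20} gives directly
\[
[\e_\alpha(X),\e_\alpha(Y)]=\e_\alpha([X,Y]_0)+R^{\Lambda S}(X,Y)+\nabla\Phi(X,Y)+\Phi\wedge\Phi(X,Y),
\]
so only the vanishing of the last three terms for Killing $X,Y$ remains. The $\alpha$-trace pieces drop out because $X$, $Y$ and $[X,Y]_0$ are all divergence-free and the scalar parts commute through the skew pieces. The remainder lives in ${\rm End}(\Gamma S)$ and, exploiting that the Clifford assignment of \eqref{ass2} is a Lie algebra homomorphism on skew endomorphisms and satisfies $R^{\Lambda S}(X,Y)=R(X,Y)_\a$ on spinors, reduces to the ${\rm End}(TM)$ identity
\[
[\nabla X,\nabla Y]+\nabla[X,Y]_0-R(X,Y)=0,
\]
which follows from Kostant's formula applied to each of $X$ and $Y$, combined with the first Bianchi identity. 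The main technical obstacle is precisely this classical curvature bookkeeping: tracking sign conventions across the Kostant formula, the commutator in $\mathfrak{so}(T_xM)$, and the spin lift, so as to confirm that the three residual terms conspire to zero.
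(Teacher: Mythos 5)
Your proposal is correct and follows essentially the same route as the paper: the Lie derivative is evaluated case by case on v-like, s-like and mixed pairs of generators (reducing respectively to $\mathcal{L}^0_Xg$, the trace term killed by divergence-freeness, and the Kostant identity $\nabla_Y(\nabla X)=-R(X,Y)$), while the bracket identity comes from Proposition \ref{satz20} together with $[\nabla X,\nabla Y]=-\nabla[X,Y]_0+R(X,Y)$ for Killing fields, exactly as in the paper's Lemmas \ref{lemma1} and \ref{lemma2}. The only cosmetic difference is that in the mixed case you suppress the $\alpha\,A({\rm tr}(\nabla X))$ contribution, which the paper records explicitly before noting it vanishes for Killing $X$.
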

\begin{remark}
Consider an orthonormal frame $\{e_i\}$ of $M$ and a frame $\{\theta_\kappa\}$ of $S$. This gives rise to a frame $\{\jmath(e_i),\jmath(\theta_\kappa)\}$ of the SGM $\hat M$, see \eqref{iso2}. Furthermore, consider the connection coefficients $\nabla_ie_j=\omega_{ij}{}^ke_k$ . With respect to this local structure and if we recall \eqref{localendo}  the field $\e_\alpha(e_i)$ is written as 
\[
\e_\alpha(e_i)  = 
\jmath(e_i) 
-\tfrac{1}{4}\omega_{jik}(\gamma^{jk}{})_{\mu}^{\lambda}C^{\mu\kappa}\theta_\lambda\otimes\jmath(\theta_\kappa  )
+\alpha\omega_{ji}{}^j   C^{\lambda\kappa}\theta_\lambda\otimes \jmath(\theta_\kappa)\, .
\] 
\end{remark}
For the proof of theorem \ref{7} we need some facts on (conformal) Killing vector fields summarized in the next lemma.
\begin{lemma}\label{lemma1}
\begin{enumerate}[leftmargin=2.5em]
\item[(1)] \label{1a}For all vector fields $X,Y$ the following identity holds.
\begin{equation}
R(X,Y) = \nabla_X(\nabla Y)-\nabla_Y(\nabla X)-\nabla[X,Y]_0- [\nabla X,\nabla Y]
\end{equation}
\item[(2)]\label{Kobayashi2} The endomorphism $\nabla X$ is skew symmetric, i.e.\ $(\nabla X)_\a=\nabla X$, if and only if $X$ is a Killing vector field.
\item[(3)]\label{3} For a Killing vector field $X$ and for all vector field $Y$ we have
\begin{equation}
\nabla_Y(\nabla X)=-R(X,Y).\label{CurvatureKilling}
\end{equation}
 Furthermore for two Killing vector fields $X,Y$ we have
\begin{equation}
[\nabla X,\nabla Y]=-\nabla[X,Y]_0+R(X,Y).
\end{equation}
\item[(4)] For two conformal fields we have
\begin{equation}
{\rm tr}(\nabla[X,Y]_0)=X({\rm tr}(\nabla Y))-Y({\rm tr}(\nabla X)).
\end{equation}
\end{enumerate}
\end{lemma}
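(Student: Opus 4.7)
Identity (1) is endomorphism-valued in the arguments $X, Y$, so it suffices to evaluate both sides on a test vector $Z$. Expanding $(\nabla_X(\nabla Y))(Z) = \nabla_X \nabla_Z Y - \nabla_{\nabla_X Z} Y$ and its counterparts, using the torsion-free identity $\nabla_X Z - \nabla_Z X = [X,Z]_0$ and the defining formula $R(X,Z) = [\nabla_X, \nabla_Z] - \nabla_{[X,Z]_0}$, one finds that the right-hand side collapses to $R(X,Z)Y - R(Y,Z)X$. The first Bianchi identity $R(X,Y)Z + R(Y,Z)X + R(Z,X)Y = 0$ then converts this into $R(X,Y)Z$, as required. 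Part (2) is the textbook characterization: since $(\mathcal{L}_X g)(Z,W) = g(\nabla_Z X, W) + g(Z, \nabla_W X)$, vanishing of $\mathcal{L}_X g$ is equivalent to $\nabla X$ being skew as an endomorphism with respect to $g$.

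For (3), set $A = \nabla X$, which is skew by (2). The standard Ricci identity applied to the vector field $X$ gives
\begin{equation*}
(\nabla_Y A)(Z) - (\nabla_Z A)(Y) = R(Y,Z) X,
\end{equation*}
and metric compatibility together with skewness of $A$ shows that each $\nabla_Y A$ is itself skew. To promote this antisymmetric identity to the full claim $\nabla_Y A = -R(X,Y)$, I would introduce the auxiliary tensor
\begin{equation*}
U(Y,Z,W) := g\bigl((\nabla_Y A)Z,\, W\bigr) + g\bigl(R(X,Y)Z,\, W\bigr),
\end{equation*}
which is manifestly skew in $(Z,W)$. A short calculation using the identity above and the first Bianchi identity for $R$ shows that $U$ is symmetric in $(Y,Z)$. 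A tensor simultaneously skew in $(Z,W)$ and symmetric in $(Y,Z)$ vanishes identically by the usual six-term permutation argument, so $U \equiv 0$ and (3) follows. The commutator identity for two Killing fields is then immediate from (1) by substituting $\nabla_X(\nabla Y) = R(X,Y)$ and $\nabla_Y(\nabla X) = -R(X,Y)$ and solving.

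For (4), I would dispose of the conformal hypothesis altogether: since ${\rm tr}(\nabla Z) = {\rm div}(Z)$ is characterized by $\mathcal{L}_Z\,{\rm vol}_g = {\rm div}(Z)\,{\rm vol}_g$, applying $\mathcal{L}_{[X,Y]_0} = [\mathcal{L}_X, \mathcal{L}_Y]$ to ${\rm vol}_g$ together with the Leibniz rule yields ${\rm div}([X,Y]_0) = X({\rm div}\, Y) - Y({\rm div}\, X)$ for arbitrary vector fields. Alternatively, expanding in local coordinates, the difference of the two sides reduces to $X^j(\nabla_i\nabla_j - \nabla_j\nabla_i) Y^i - Y^j(\nabla_i\nabla_j - \nabla_j\nabla_i) X^i = (X^j Y^k - Y^j X^k) R_{jk}$, which vanishes by the symmetry of the Ricci tensor.

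The main obstacle is the passage from the Ricci identity to the pointwise equality in (3): the Ricci identity supplies only the antisymmetric combination $(\nabla_Y A)(Z) - (\nabla_Z A)(Y)$, and isolating the individual term requires the symmetry-breaking role of the first Bianchi identity encoded in the tensor $U$. Once (3) is established, the remaining statements reduce to direct substitutions into (1) or to standard identities for the divergence.
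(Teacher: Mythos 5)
Your proposal is correct. The paper itself gives almost no argument here: it declares (2) to be the definition of a Killing field, refers to Kobayashi--Nomizu for (1) and (3), and proves (4) only through the transformation law $f_{[X,Y]_0}=Xf_Y-Yf_X$ of the conformal factor under the bracket. Your write-up supplies the details the paper outsources, and it does so by the standard routes: the pointwise expansion plus first Bianchi for (1), and for (3) the Ricci identity $(\nabla_YA)(Z)-(\nabla_ZA)(Y)=R(Y,Z)X$ combined with the six-term permutation argument applied to a tensor skew in one pair and symmetric in another --- this is exactly Kostant's classical derivation of $\nabla_Y(\nabla X)=-R(X,Y)$, so you are reconstructing the cited proof rather than replacing it. (Your check that $U$ is symmetric in $(Y,Z)$ does reduce, via first Bianchi, to $g(R(X,Z)Y,W)-g(R(X,Z)Y,W)=0$, so that step is sound.) The one place where you genuinely diverge from, and improve on, the paper is (4): the identity ${\rm tr}(\nabla[X,Y]_0)=X({\rm tr}(\nabla Y))-Y({\rm tr}(\nabla X))$ holds for \emph{all} vector fields, as your volume-form argument (or the coordinate computation terminating in the symmetry of the Ricci tensor) shows; the conformal hypothesis in the statement is superfluous. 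The paper's route through the conformal factor is shorter but yields only the special case, and it silently relies on the unproved assertion $f_{[X,Y]_0}=Xf_Y-Yf_X$, which is itself a consequence of $\mathcal{L}_{[X,Y]_0}g=[\mathcal{L}_X,\mathcal{L}_Y]g$ --- morally the same Lie-derivative computation you perform on ${\rm vol}_g$.
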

\begin{remark}
The first equality in \eqref{3} holds if and only if $X$ is an  infinitesimal affine transformations, and  the second  if both, $X$ and $Y$, are. 
That means that  the natural lifts of the vector fields to the frame bundle preserve the connection form. In particular each Killing vector field is an infinitesimal affine transformation for the Levi-Civita connection.
\end{remark}
\begin{proof}\;[Lemma \ref{lemma1}.]\ \
(2) is just the definition of a Killing vector field and the calculations for (1) and (3) can for example be found in \cite{KobNom}.

To prove (4) we recall the definition of conformal vector fields. A vector field is a conformal vector field if  $\mathcal{L}^0_Xg=f_X\cdot g$ for a function $f_X$ that is given by a multiple of the divergence of the vector field $X$, $f_X=\frac{2}{D}{\rm div} X=\frac{2}{D}{\rm tr}(\nabla X)$. Now the statement is the change of the conformal factor under the bracket $f_{[X,Y]_0}=Xf_Y-Yf_X$.
\end{proof}

\begin{lemma}\label{lemma2}
For $\e_\alpha(X), \e_\alpha(Y)$ as above we have
\begin{equation}
\big[\e_\alpha(X),\e_\alpha(Y)\big]=\e_\alpha([X,Y]_0)-\Xi(X,Y)+\alpha\Theta(X,Y).
\end{equation}
With
\begin{align}
\Xi(X,Y)&=\frac{1}{4}
\left((\nabla X)_{\rm s}(\nabla Y)_{\rm s}-(\nabla Y)_{\rm s}(\nabla X)_{\rm s}\right)_{ij}
\gamma^{ij}\\
\Theta(X,Y)&= X({\rm tr}(\nabla Y))-Y({\rm tr}(\nabla X))-{\rm tr}(\nabla[X,Y]_0)
\end{align}
$\Xi$ vanishes if one of the entries is a conformal vector field and $\Theta$ if both entries are conformal vector fields.
\end{lemma}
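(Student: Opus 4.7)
My plan is to apply Proposition \ref{satz20} to $\e_\alpha(X) = X + \Phi(X)$ with
\[
\Phi(X) = -(\nabla X)_\a + \alpha\,{\rm tr}(\nabla X),
\]
and show that the three correction terms $R(X,Y) + \nabla\Phi(X,Y) + \Phi\wedge\Phi(X,Y)$ collapse to exactly $-\Xi(X,Y) + \alpha\Theta(X,Y)$. The scalar piece $\alpha\,{\rm tr}(\nabla X)$ acts by ordinary multiplication and commutes with everything, so it will contribute only through $\nabla\Phi$ and will collect cleanly into $\alpha\Theta(X,Y)$.

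First I would expand
\[
\nabla\Phi(X,Y)=\nabla_X\Phi(Y)-\nabla_Y\Phi(X)-\Phi([X,Y]_0).
\]
The trace contributions assemble into $\alpha\bigl(X({\rm tr}\nabla Y)-Y({\rm tr}\nabla X)-{\rm tr}(\nabla[X,Y]_0)\bigr)=\alpha\Theta(X,Y)$. Because the Levi-Civita connection is metric, $\nabla_X$ commutes with skew-symmetrization, so the skew part becomes
\[
-\bigl(\nabla_X(\nabla Y)-\nabla_Y(\nabla X)-\nabla[X,Y]_0\bigr)_\a = -\bigl(R(X,Y)+[\nabla X,\nabla Y]\bigr)_\a,
\]
where the last equality is Lemma \ref{lemma1}(1). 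Since $R(X,Y)$ is already skew on $TM$, this $R$-piece cancels the $R(X,Y)$ from Proposition \ref{satz20}. For the remaining commutator I decompose $\nabla X=(\nabla X)_\a+(\nabla X)_\s$: among the four resulting brackets, those of mixed skew/symmetric type are symmetric, while $[\text{skew},\text{skew}]$ and $[\text{symm},\text{symm}]$ are skew. Hence
\[
[\nabla X,\nabla Y]_\a = [(\nabla X)_\a,(\nabla Y)_\a] + [(\nabla X)_\s,(\nabla Y)_\s].
\]
The spin representation is a Lie algebra homomorphism $\mathfrak{so}(TM)\to\mathfrak{gl}(S)$, so the lift of $[(\nabla X)_\a,(\nabla Y)_\a]$ is precisely $[\Phi(X),\Phi(Y)]=\Phi\wedge\Phi(X,Y)$, which kills the corresponding term of Proposition \ref{satz20}. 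The leftover piece is the spin lift of $[(\nabla X)_\s,(\nabla Y)_\s]$, which is by definition $\Xi(X,Y)$, entering with a minus sign; combined with $\alpha\Theta(X,Y)$ this gives the claimed identity.

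For the concluding statements: if $X$ is conformal then $\mathcal{L}_X g = f_X g$ forces $(\nabla X)_\s=\tfrac{f_X}{2}\,\mathrm{id}$, a scalar multiple of the identity, which commutes with every endomorphism; hence $\Xi(X,Y)=0$ whenever at least one of $X,Y$ is conformal. If both $X$ and $Y$ are conformal, Lemma \ref{lemma1}(4) is exactly the statement $\Theta(X,Y)=0$.

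The main obstacle is bookkeeping rather than depth: one must carefully distinguish between endomorphisms of $TM$ and their spin lifts in ${\rm End}(S)$, and verify in each step that the relevant operation (skew-symmetrization, covariant derivative, commutator) intertwines correctly with the spin lift. The key structural facts making everything work are that $\nabla g=0$ (so $\nabla$ preserves the splitting into symmetric and skew parts) and that $\rho_*\colon\mathfrak{so}\to\mathfrak{gl}(S)$ is a Lie algebra homomorphism (so that the skew-skew commutator exactly matches $\Phi\wedge\Phi$).
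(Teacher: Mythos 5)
Your proposal is correct and follows essentially the same route as the paper: insert $\Phi(X)=-(\nabla X)_\a+\alpha\,{\rm tr}(\nabla X)$ into Proposition \ref{satz20}, use that $\nabla$ commutes with (skew-)symmetrization, Lemma \ref{lemma1}(1) with $R(X,Y)_\a=R(X,Y)$, and the identity $[A_\a,B_\a]+[A_{\rm s},B_{\rm s}]=[A,B]_\a$ to isolate the symmetric-symmetric commutator as $-\Xi$, with the conformal statements handled exactly as in the paper via $(\nabla X)_{\rm s}=\tfrac{f_X}{2}\,{\rm id}$ and Lemma \ref{lemma1}(4).
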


\begin{proof}
We insert $\Phi(X)=-(\nabla X)_a+\alpha\,{\rm tr}(\nabla X)$ in Proposition \ref{satz20} and make use of the following facts:
\begin{itemize}[leftmargin=2em]
\item $\nabla_Y$ commutes with $A\mapsto A^t$ where the transposition ${}^t$ is taken  with respect to $g$.
\item $[A_\a,B_\a]+[A_{\rm s},B_{\rm s}]=[A,B]_\a$,
 i.e.\ $\frac{1}{4}[A-A^t,B-B^t]+\frac{1}{4}[A+A^t,B+B^t]
 =\frac{1}{2}([A,B]-[A,B]^t)$.
\item Part 1.\ of the previous lemma together with $R(X,Y)_\a=R(X,Y)$.
\end{itemize}
The vanishing of $\Xi$ for a conformal vector field  is obvious, because in this case $\mathcal{L}^0_Xg$ is a multiple of the identity. The  statement on $\Theta$  is part 4.\ of the previous lemma.
\end{proof}

\begin{proof}\;[Theorem \ref{7}]\ \
For the calculation of the Lie derivative we restrict ourself to entries in $\X(M)\oplus\Gamma S$, because of the $\Gamma\Lambda S$-linearity.
\begin{align*}
(\mathcal{L}_{\e(X)} h)(A,B) 
&= (\mathcal{L}_Xh)(A,B)-(\mathcal{L}_{(\nabla X)_\a}h)(A,B)
     +(\mathcal{L}_{\alpha{\rm tr}(\nabla X)}h)(A,B)\\
&= X(h(A,B))-h([X,A],B)-h(A,[X,B])\nonumber\\
&\quad -\underbrace{(\nabla X)_\a(h(A,B))}_{=0}
      +h([(\nabla X)_\a,A],B)+h(A,[(\nabla X)_\a,B])\\
&\quad  +\underbrace{\alpha\,{\rm tr}(\nabla X)(h(A,B))}_{=0}
      +h([\alpha\,{\rm tr}(\nabla X),A],B)
      +h(A,[\alpha\,{\rm tr}(\nabla X),B])
\end{align*}
For two v-like entries $A,B$ this reads as
\begin{align*}
(\mathcal{L}_{\e(X)}h)(A,B)
&= 		X(g(A,B))-g([X,A]_0,B)-g(A,[X,B]_0)-\underbrace{h(R(X,A),B)}^{=0}\\
&\quad -\underbrace{h(A,R(X,B))}_{=0}+\underbrace{h([(\nabla X)_\a,A],B)}_{=0}
		+\underbrace{h(A,[(\nabla X)_\a,B])}_{=0}\\
&\quad +\underbrace{h([\tfrac{\alpha}{4}{\rm tr}(\nabla X),A],B)}_{=0}
		+\underbrace{h(A,[\tfrac{\alpha}{4}{\rm tr}(\nabla X),B])}_{=0} \\
&= 		(\mathcal{L}^0_Xg)(A,B).
\end{align*}
For two s-like entries we get
\begin{align*}
(\mathcal{L}_{\e(X)}h)(A,B) 
&= 		\underbrace{XC( A,B)-C( \nabla_XA,B) -C( A,\nabla_XB)}^{=0}\\
&\quad  + \underbrace{C( (\nabla X)_\a A,B) 
		+C( A,(\nabla X)_\a B) }^{=0}
		-2\alpha\,{\rm tr}(\nabla X)\cdot C( A,B) \\
&=  	-2\alpha\,{\rm tr}(\nabla X)\cdot C( A,B)\,.
\end{align*}
Last but not least one v-like entry $A$ and one s-like entry $B$ give
\begin{align*}
(\mathcal{L}_{\e(X)}h)(A,B) 
&= X(\underbrace{h(A,B)}^{=0})-\underbrace{h([X,A]_0,B)}^{=0}
    -h( R(X,A),B)
   -\underbrace{h(A,\nabla_XB)}^{=0}\\
 &\quad -h(\nabla_A((\nabla X)_\a),B)+\underbrace{h(A,(\nabla X)_\a B)}_{=0}
    -\alpha\,A({\rm tr}(\nabla X))\cdot h( {\bf 1},B)\\
 &\quad    +\underbrace{h(A,[\alpha\,{\rm tr}(\nabla X),B])}_{=0}\\
&=  -h( R(X,A)+\nabla_A((\nabla X)_\a),B) -\alpha\,A({\rm tr}(\nabla X)) B,
\end{align*}
where we used $h(\Phi,\phi)=\Phi\phi\in\Lambda\Gamma S$ for fields $\Phi\in\Gamma S\otimes\Gamma S$ and $\phi\in\Gamma S$.

For $A=A^v+A^s,B=B^v+B^s\in\X(M)\oplus\Gamma S$ the last three terms together are
\begin{align*}
(\mathcal{L}_{\e(X)}h)(A, B)
& = (\mathcal{L}^0_Xg)(A^v,B^v)
     -2\alpha\,{\rm tr}(\nabla X)\cdot C(  A^s,B^s) \\
&\quad - \alpha\cdot h( A^v\big({\rm tr}(\nabla X)\big){\bf 1},B^s
    -\alpha\cdot h( A^s, B^v\big({\rm tr}(\nabla X)\big){\bf 1}) \\
&\quad - h( A^s,R(X,B^v)+(\nabla_{B^v}(\nabla X))_\a )
    - h( R(X,A^v)+(\nabla_{A^v}\nabla X))_\a,B^s)\,.
\end{align*}
Here the first summand vanishes if and only if $X$ is a Killing vector field. But then, because of part \ref{3}.\ of Lemma \ref{lemma1}, the last two summands are  identical zero, too. The same is true for the remaining parts, because $\nabla X$ is trace free, see (\ref{Kobayashi2}) of Lemma \ref{lemma1}.

Furthermore, for Killing vector fields $X,Y$  we have
\begin{equation}
[\e_\alpha(X),\e_\alpha(Y)] = \e_\alpha([X,Y]_0)
\end{equation}
because of Lemma \ref{lemma2}.
\end{proof}
In addition to part (\ref{1a}) of Lemma \ref{lemma1} we state the next lemma.
\begin{lemma}\label{conformlemma}
\begin{enumerate}[leftmargin=2.5em]
\item[(1)]For all vector fields on $M$ we have
\begin{equation}
(\nabla_V\mathcal{L}^0_Yg) = 2(\nabla_V(\nabla Y))_{\rm s}
\end{equation} and
\begin{equation}\begin{aligned}
g(R(Y,Z)V,W)&= (\nabla_V\mathcal{L}^0_Yg)(W,Z)-(\nabla_W\mathcal{L}^0_Yg)(V,Z) 
          +g(\nabla_W(\nabla Y)(Z),V) \\&\quad-g(\nabla_V(\nabla Y)(Z),W)
\end{aligned}\end{equation}
\item[(2)] For a conformal vector field $Y$ with divergence function $f$ and for all vector fields $Z$ we have
\begin{equation}
R(Y,Z)=\frac{1}{2}df\wedge Z-\nabla_Z(\nabla Y)_\a
\end{equation}
where we write $df\wedge Z\,(V,W)=V(f)g(Z,W)-W(f)g(Z,V)$.
\end{enumerate}
\end{lemma}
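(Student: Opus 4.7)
The plan is to prove part (1) directly and then derive part (2) by specialization. The first identity of part (1) is essentially automatic: $(\mathcal{L}^0_Y g)(V,W)=g(\nabla_V Y,W)+g(V,\nabla_W Y)$ shows that $\mathcal{L}^0_Y g$ coincides with $2(\nabla Y)_{\rm s}^\flat$, the bilinear form associated (via $g$) to the symmetric part of the endomorphism $\nabla Y$. Since the Levi-Civita connection is metric and commutes both with symmetrization of $(1,1)$-tensors and with the musical isomorphism, $\nabla_V\mathcal{L}^0_Y g = 2\bigl(\nabla_V(\nabla Y)\bigr)_{\rm s}^\flat$ follows at once.

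For the second identity of part (1), I would expand the first two terms of the claimed right-hand side using this first identity,
\[
(\nabla_V\mathcal{L}^0_Y g)(W,Z) = g(\nabla_V(\nabla Y)(W),Z) + g(W,\nabla_V(\nabla Y)(Z)),
\]
and the analogous expression with $V$ and $W$ swapped. Adding the two remaining terms $g(\nabla_W(\nabla Y)(Z),V) - g(\nabla_V(\nabla Y)(Z),W)$, the ``mixed'' contributions involving $(\nabla Y)(Z)$ cancel pairwise by the symmetry of $g$, leaving only $g(\nabla_V(\nabla Y)(W) - \nabla_W(\nabla Y)(V),Z)$. Since $\nabla$ is torsion-free, the Ricci identity collapses this bracket to $R(V,W)Y$, and the pair-interchange symmetry $g(R(V,W)Y,Z) = g(R(Y,Z)V,W)$ of the Riemann tensor yields the claim.

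For part (2) I would apply the formula just proved to a conformal $Y$, so that $\mathcal{L}^0_Y g = f\cdot g$ and hence $\nabla Y = \tfrac{f}{2}\id + (\nabla Y)_\a$. The first two terms of the right-hand side collapse to $V(f)g(W,Z) - W(f)g(V,Z) = (df\wedge Z)(V,W)$. Substituting the decomposition of $\nabla Y$ into the remaining two terms, the scalar part $\tfrac{f}{2}\id$ contributes $-\tfrac12(df\wedge Z)(V,W)$, while the skew part contributes $(\nabla_W\eta)(Z,V) - (\nabla_V\eta)(Z,W)$, where $\eta(V,W):=g((\nabla Y)_\a V,W)$. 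A short metric-compatibility computation reveals $\eta = \tfrac{1}{2}\,dY^\flat$, so $\eta$ is exact and in particular $d\eta = 0$; its coordinate-free expression on a torsion-free manifold, $(\nabla_V\eta)(W,Z)-(\nabla_W\eta)(V,Z)+(\nabla_Z\eta)(V,W)=0$, combined with the skew-symmetry of $\eta$, collapses the remaining terms into $-(\nabla_Z\eta)(V,W) = -g(\nabla_Z(\nabla Y)_\a V,W)$, giving the stated formula.

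The main obstacle will be this last bookkeeping step in part (2): one must correctly identify $\eta$ with the exact 2-form $\tfrac12 dY^\flat$ and invoke its closedness together with the skew-symmetries of both $\eta$ and $\nabla\eta$ to reassemble the pair of derivatives of $(\nabla Y)_\a$ into the single term $\nabla_Z(\nabla Y)_\a$.
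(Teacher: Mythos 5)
Your proposal is correct. Part (1) follows the paper's own route essentially verbatim: the paper likewise expands $\nabla_V\mathcal{L}^0_Yg$ via the Leibniz rule to obtain $2(\nabla_V(\nabla Y))_{\rm s}$, then cancels the mixed terms and collapses $g(\nabla_V(\nabla Y)(W)-\nabla_W(\nabla Y)(V),Z)$ to $g(R(V,W)Y,Z)=g(R(Y,Z)V,W)$. For part (2), however, you take a genuinely different path. The paper keeps $\nabla Y$ intact, uses the Ricci identity to trade $\nabla_W(\nabla Y)(Z)$ for $\nabla_Z(\nabla Y)(W)+R(W,Z)Y$, and then invokes the first Bianchi identity to recombine the two extra curvature terms into $-g(R(Y,Z)V,W)$, so that the target quantity appears on both sides and is solved for with a factor of $2$. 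You instead split $\nabla Y=\tfrac{f}{2}\,\id+(\nabla Y)_\a$ at the outset and handle the skew part by recognizing $\eta=(\nabla Y)_\a^\flat=\tfrac12\,dY^\flat$ as an exact, hence closed, $2$-form, whose closedness expressed through the torsion-free connection reassembles $(\nabla_W\eta)(Z,V)-(\nabla_V\eta)(Z,W)$ into $-(\nabla_Z\eta)(V,W)$. Both computations check out; yours trades the explicit appeal to the first Bianchi and pair-interchange symmetries of $R$ for the observation that $d^2Y^\flat=0$, which is arguably cleaner conceptually, while the paper's version is a more self-contained index manipulation that never leaves the curvature-tensor formalism. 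The one step you should spell out when writing this up is the sign bookkeeping in passing from closedness, $(\nabla_V\eta)(W,Z)-(\nabla_W\eta)(V,Z)+(\nabla_Z\eta)(V,W)=0$, to the combination $(\nabla_W\eta)(Z,V)-(\nabla_V\eta)(Z,W)$ actually occurring in your expansion, which uses the skew-symmetry of $\nabla_X\eta$ in its two arguments; as you note, this is the only delicate point, and it does go through.
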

\begin{proof}
We have 
\begin{align*}
(\nabla_V\mathcal{L}^0_Yg) (W,Z)
&= V\big((\mathcal{L}^0_Yg)(W,Z)\big) -(\mathcal{L}^0_Yg)(\nabla_VW,Z)-(\mathcal{L}^0_Yg)(W,\nabla_VZ)\\
&= V(g(\nabla_WY,Z)+g(W,\nabla_ZY))-g(\nabla_{\nabla_VW}Y,Z)-g(\nabla_VW,\nabla_ZY)\\&\quad 
 -g(\nabla_WY,\nabla_VZ)-g(W,\nabla_{\nabla_VZ}Y)\\
&= g\big(\nabla_V\nabla_WY-\nabla_{\nabla_VW}Y,Z\big)
    +g\big(W,\nabla_V\nabla_ZY-\nabla_{\nabla_VZ}Y\big)\\
&= g\big(\nabla_V(\nabla Y)(W),Z\big)+g\big(W,\nabla_V(\nabla Y)(Z)\big)\\&
= g(2(\nabla_V(\nabla Y)_{\rm s}(W),Z\big)
\end{align*}
which is the first part of (1). Together with $A^T=2A_{\rm s}-A$ this  yields
\begin{align*}
(\nabla_V\mathcal{L}^0_Yg)&(W,Z)-(\nabla_W\mathcal{L}^0_Yg)(V,Z) +g(\nabla_W(\nabla Y)(Z),V) 
-g(\nabla_V(\nabla Y)(Z),W)\\
&= g(\nabla_V(\nabla Y)(W),Z)-g(\nabla_W(\nabla Y)(V),Z)\\ 
&= g(R(V,W)Y,Z)\\& =\ g(R(Y,Z)V,W)\,.
\end{align*}
(2) Let $Y$ be conformal, i.e.\ $\mathcal{L}^0_Yg=f\cdot g$, which because of $\nabla g=0$ implies $\nabla_V\mathcal{L}^0_Yg=V(f)\cdot g$. This, together with (1) and a frequent use of the $1^{\rm st}$ Bianchi identity for $R$, yields
\begin{align*}
&g(R( Y,Z)V,W) \\
&=  V(f)\cdot g(W,Z)-W(f)\cdot g(V,Z)+g(\nabla_W(\nabla Y)(Z),V)
  -g(\nabla_V(\nabla Y)(Z),W)\\
 &= df\wedge Z\,(V,W)+g(\nabla_Z(\nabla Y)(W),V)-g(\nabla_Z(\nabla Y)(V),W)
  +\,g(R(W,Z)Y,V)\\&\quad-g(R(V,Z)Y,W)\\
 &= df\wedge Z\,(V,W)-2g\big(\nabla_Z(\nabla Y)_\a(V),W\big)
  -g(R(Y,Z)V,W)
\end{align*}
\end{proof}
\begin{corollary}
Let $\e_\alpha(X)$ like before with $\alpha=\mp\frac{1}{2D}$. Furthermore let $X$ be a conformal vector field on the base manifold $M$ and $f=\frac{2}{D}{\rm tr}(\nabla X)$ its divergence function. Then we get for the Lie derivative of the bilinear form $h$
\begin{align}
\big(\mathcal{L}_{\e(X)}h\big)(V,W) =\ & f\cdot g(V,W)\nonumber \\
\big(\mathcal{L}_{\e(X)}h\big)(\phi,\psi) =\ &
\begin{cases}
\frac{1}{2}f\cdot C( \phi,\psi) \\
-\frac{1}{2}f\cdot C(\phi,\psi)
\end{cases} \\
\big(\mathcal{L}_{\e(X)}h\big)(V,\phi)
   =\ & 
\begin{cases}
-(df\cdot V^T)\cdot\phi\\
(V\cdot df^T)\cdot\phi
\end{cases} \nonumber
\end{align}
where the action of $(df\cdot V^T)$ on the spinors is according to \eqref{ass}.
\end{corollary}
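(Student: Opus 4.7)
The strategy is to specialise the general formula for $\mathcal{L}_{\e_\alpha(X)}h$ derived inside the proof of Theorem~\ref{7} to the conformal setting. The conformal assumption $\mathcal{L}^0_X g = f\cdot g$ gives ${\rm tr}(\nabla X) = \tfrac{D}{2}f$ immediately, and Lemma~\ref{conformlemma}(2) supplies the key simplification $R(X,V) + \nabla_V(\nabla X)_\a = \tfrac{1}{2}\,df\wedge V$. Because the Lie derivative is $\Gamma\Lambda S$-linear in each slot, it is enough to evaluate on basic entries from $\X(M)\oplus\Gamma S$, which splits the verification into three separate cases.

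The two pure cases are immediate. For two v-like entries only the $(\mathcal{L}^0_X g)$-contribution survives in the general expression from the proof of Theorem~\ref{7}, and it collapses to $f\cdot g(V,W)$. For two s-like entries only the $-2\alpha\,{\rm tr}(\nabla X)\cdot C(\phi,\psi)$ piece remains; inserting $\alpha=\mp\tfrac{1}{2D}$ and ${\rm tr}(\nabla X)=\tfrac{D}{2}f$ yields $\pm\tfrac{1}{2}f\cdot C(\phi,\psi)$.

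The mixed case carries the content. Two terms from the general formula survive there: a curvature-type piece $-h\bigl(R(X,V)+\nabla_V(\nabla X)_\a,\,\phi\bigr)$ and a scalar piece $-\alpha\,V({\rm tr}(\nabla X))\cdot\phi$. Lemma~\ref{conformlemma}(2) rewrites the former as $-h\bigl(\tfrac{1}{2}\,df\wedge V,\phi\bigr)$, whose Clifford action on $\phi$ equals $-\tfrac{1}{8}(df\wedge V)_{ij}\gamma^{ij}\phi$, while the latter evaluates to $\mp\tfrac{1}{4}V(f)\phi$.

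The main obstacle I foresee is reassembling this sum into the compact expressions $\mp(df\cdot V^T)\cdot\phi$, respectively $\pm(V\cdot df^T)\cdot\phi$. My plan for this step is to decompose the two rank-one endomorphisms of $\X(M)$ into their symmetric and skew parts: both $df\cdot V^T$ and $V\cdot df^T$ have trace $V(f)$, while their skew parts are $\pm\tfrac{1}{2}\,df\wedge V$ with opposite signs. Formula~\eqref{ass2} then sends each endomorphism to the Clifford element $\tfrac{1}{8}(df\wedge V)_{ij}\gamma^{ij}\mp\tfrac{1}{4}V(f)$. The sign of the scalar piece here matches precisely the sign produced by $-\alpha\tfrac{D}{2}V(f)$ under the chosen normalisation $\alpha = \mp\tfrac{1}{2D}$, so the two stated formulas emerge simultaneously, completing the identification.
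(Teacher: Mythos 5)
Your proposal is essentially the paper's own proof: it specialises the three-case formula from the proof of Theorem~\ref{7}, invokes Lemma~\ref{conformlemma}(2) to turn the curvature term into $\tfrac{1}{2}df\wedge V$, and reassembles the mixed case via \eqref{ass2} using $(df\cdot V^T)_\a=\tfrac12 df\wedge V=-(V\cdot df^T)_\a$ and ${\rm tr}(df\cdot V^T)=V(f)$. The only blemish is a misplaced sign in your intermediate Clifford element (it should read $\pm\tfrac{1}{8}(df\wedge V)_{ij}\gamma^{ij}-\tfrac{1}{4}V(f)$, with the $\pm$ on the skew part and a fixed $-\tfrac14 V(f)$ trace term for both endomorphisms), which does not affect the argument.
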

\begin{proof}
Let $X$, $\alpha$ and $f$ as stated. From the proof of Theorem \ref{7} we know
\begin{align*}
\big(\mathcal{L}_{\e(X)}h\big)(V,W)& = (\mathcal{L}^0_Xg)(V,W) = f\cdot g(V,W)\\
\big(\mathcal{L}_{\e(X)}h\big)(\phi,\psi)& = -2\alpha\,{\rm tr}(\nabla X)\cdot C(\phi,\psi)
 =  \pm\frac{1}{2}f\cdot C(\phi,\psi)
\end{align*}
We recall ${\rm tr}(df\cdot V^T) = V(f)$ and 
\begin{equation}
\frac{1}{2}df\wedge V =  \frac{1}{8}\big((df\cdot V^T)_{ij}-(df\cdot V^T)_{ji}\big)\gamma^{ij}
 =(df\cdot V^T)_\a=-(V\cdot df^T)_\a.
\nonumber
\end{equation}
Using part (2) of Lemma \ref{conformlemma} this yields
\begin{align*}
\big(\mathcal{L}_{\e(X)}h\big)(V,\phi) =\ & -h( R(X,V)+(\nabla_{V}(\nabla X))_\a
            +\alpha\,V\big({\rm tr}(\nabla X)\big){\bf 1},\phi )\\
=\ &-h( \frac{1}{2}df\wedge V
            +\frac{D\alpha}{2}\,V(f){\bf 1},\phi)\\
=\ & 
\left.\begin{cases}
   - h(  (df\cdot V^T)_\a-\frac{1}{4}\,V(f){\bf 1},\phi ) \\
      h( (V\cdot df^T)_\a-\frac{1}{4}\,V(f){\bf 1},\phi )
\end{cases}\negthickspace\negthickspace\negthickspace\right\}
 =  \begin{cases}
-(df\cdot V^T)\cdot\phi  \\
 (V\cdot df^T)\cdot\phi
\end{cases}
\end{align*}
\end{proof}

\section{Supersymmetric Killing structures}

\subsection{Definition of the SSKS}

We saw that if we take $S$ to be the (real or complex) spinor bundle over the (pseudo) Riemann\-ian spin manifold $M$ the vector fields and the spinor fields are on equal footing as (complexified) vector fields on the special graded manifold $\hat M=(M,\mathfrak{A})$, denoted by $\X(\hat M)$. We recall the natural inclusions $\X(M)\subset\X(\hat M)_0$ and $\Gamma S\subset\X(\hat M)_1$.

Inspired by the construction of the supersymmetry generators in the flat case we consider maps
\begin{equation}
\begin{array}{rcl}
\X(M)\ni X &\mapsto& \e(X)\in\X(\hat M)_0\\
\Gamma S \ni\eta&\mapsto&\o(\eta)\in\X(\hat M)_1
\end{array}\label{assi}
\end{equation}
and define
\begin{definition}\label{defiSSKS}
A {\sc supersymmetric Killing structure} (SSKS) on $M$ is a tupel $(\mathcal{K},\mathfrak{Z},\e,\o)$, where $\mathcal{K}=\mathcal{K}_0\oplus\mathcal{K}_1$  is a sub-superspace of $\X(M)\oplus \Gamma S$, $\mathfrak{Z}\subset\X(\hat M)$  and  $\e,\o$ are given by \eqref{assi} such that for all $X,Y\in\mathcal{K}_0$ and  $\eta,\xi\in\mathcal{K}_1$ we have
\begin{equation}
\begin{aligned}
 \left[\e(X),\e(Y)\right] &= \e([X,Y]) \\
 \left[\e(X),\o(\eta)\right] &\in \o(\mathcal{K}_1)  \\
 \left[\o(\eta),\o(\xi)\right] &\in \e(\mathcal{K}_0) +\mathfrak{Z} \\
 \left[\o(\eta),\mathfrak{Z}\right] &\subset\mathfrak{Z},\qquad
 \left[\e(X),\mathfrak{Z}\right]\subset\mathfrak{Z},\qquad
 \left[\mathfrak{Z},\mathfrak{Z}\right]\subset\mathfrak{Z}
\end{aligned}\end{equation}
 i.e.\ $\e(\mathcal{K}_0)\oplus\mathfrak{Z}\oplus \o(\mathcal{K}_1)$ is a sub super Lie algebra of $\X(\hat M)$. The ideal  $\mathfrak{Z}$ which occurs in this construction is called {\sc center} and depends multilineary on $\mathcal{K}_1$. The fields in $\e(\mathcal{K}_0)$ and  $\o(\mathcal{K}_1)$ are called {\sc even} and  {\sc odd Killing fields of the SSKS}, respectively.
\end{definition}

The central part $\mathfrak{Z}$ of the supersymmetric Killing structure seems at first glance to be very arbitrary. The first thing, which is obvious, is that it is  not central in the usual sense, but in the sense that it is stable under the action of $\mathcal{K}$, i.e.\ it is a $\mathcal{K}$-module. Nevertheless, we call this part the center of the SSKS. The elements in $\mathfrak{Z}$ shall only depend on the odd part of $\mathcal{K}$ and, of course, on geometrical data of the base manifold, in particular on its curvature. The dependence on the elements in  $\mathcal{K}_1$ is multilinearly, i.e.\ an element  $\mathcal{Z} \in\mathfrak{Z}$ can be written as
\begin{equation}
\mathcal{Z}=\mathcal{Z}(\varphi_1\otimes\cdots \otimes \varphi_m).
\end{equation}
We call the supersymmetric Killing structure {\sc finite}  if  $\mathfrak{Z}$ is finite as a module.
We will see that finiteness  is a very restrictive property. Nevertheless, this will be the case we will focus on first.

\subsection{Odd and even fields}

Motivated by the splitting of $S\otimes TM$ from \eqref{spinordiagram} and by using the inclusion of the vector fields on $M$ into $\X(\hat M)$ we construct even and odd vector fields on the special graded manifold. These will give rise to the odd and even Killing fields of the SSKS on $\hat M$. In the flat case the ansatz reduces to the usual realization of the SUSY algebra by vector fields on flat superspace.

The even fields are given by the assignment
\begin{equation}
\X(M) \ni X\mapsto \e_0(X)=\e(X)\in\X_0(\hat M)\,.
\end{equation}
see \eqref{evenansatz}. The Killing vector fields on the base manifold will turn out to play an important role for the construction of the Killing fields of the SSKS as they did for the construction of the even Killing fields of the SGM.

Turning to the odd fields we
consider the vector fields $\o_\pm(\phi)\in\X_1$ which are given by
\begin{equation}
\o_\pm(\phi)\ =\ \phi\pm\imath(\phi)\ \in\ \X^s \oplus \X^v\label{oddansatz1}
\end{equation}
with $\phi\in\Gamma S$. $ \imath(\phi)\in\ \Gamma S\otimes\X(M)$ is given by $\imath(\phi)(Y) = Y\phi$ for all $Y\in\X(M)$. 
This is the inclusion 
$\Gamma S\hookrightarrow \Gamma S\otimes\X(M)$ from \eqref{spinordiagram}
up to the factor $-D^{-1}$.

\begin{remark}
Consider an orthonormal frame $\{e_i\}$ of $M$ and a frame  $\{\theta_\kappa\}$ of $S$. If we write the image $\gamma_i\in{\rm End}(\Gamma S)$ of $e_i$ in this frame, the local form of $\o_\pm(\theta_\kappa)$ with respect to the frame $\{\jmath(e_i),\jmath(\theta_\kappa)\}$ is given by
\[
\o_\pm(\theta_\kappa)=\jmath(\theta_\kappa)\pm(\gamma^k)_\kappa^\lambda\theta_\lambda\otimes \jmath(e_k)
\,.
\]
\end{remark}
Although we restrict ourself later to the positive sign in the above construction, we calculate the result here for both signs. In the flat case the two signs are related to whether we define the odd generator via right or left multiplication on superspace (see \cite{WessBagger}). In particular in flat space the two generators commute for different sign, because left and right multiplication commute. This fact is reflected in Theorem \ref{oddoddcomm} and  \eqref{plusminus} therein.

For the sake of completeness we state the result the Lie derivative of the metric $h$ with respect to the field $\o(\phi)$.
\begin{proposition}
The Lie derivative of the metric $h$ with respect to the odd field $\o_+(\phi)$ is given by 
\begin{align*}
(\mathcal{L}_{\o_+(\phi)}h)(X,Y) & =  X\nabla_Y\phi+Y\nabla_X\phi \\
(\mathcal{L}_{\o_+(\phi)}h)(\eta,\xi) & =  0 \\
(\mathcal{L}_{\o_+(\phi)}h) (X,\xi) & = \gamma^k\phi\wedge R(X,e_k)\xi - C(\xi,X\phi) +C(\nabla_X\phi,\xi).
\end{align*}
If we in particular consider the spinor $\phi$ to be parallel, we are left with 
\begin{equation}\begin{aligned}
(\mathcal{L}_{\o_+(\phi)}h)(X,Y) 	&= (\mathcal{L}_{\o_+(\phi)}h)(\eta,\xi)  =  0 \\
(\mathcal{L}_{\o_+(\phi)}h) (X,\xi) &= \gamma^k\phi\wedge R(X,e_k)\xi - C(\xi,X\phi) .
\end{aligned}\end{equation}
\end{proposition}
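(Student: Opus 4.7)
The strategy is to apply the graded Lie-derivative formula \eqref{LieMetric} to $W=\o_+(\phi)=\phi+\imath(\phi)$, exploiting the decomposition of $W$ into its s-like summand $\phi\in\Gamma S$ and its v-like summand $\imath(\phi)=\gamma^k\phi\otimes e_k$ (summed over an orthonormal frame $\{e_k\}$), and to treat the three cases $(X,Y)$, $(\eta,\xi)$, $(X,\xi)$ of basic arguments separately. Since $|W|=1$ and $h$ is even, the formula reduces to
\[
(\mathcal{L}_Wh)(A,B)=(-)^{A}\bigl(W(h(A,B))-h([W,A],B)-(-)^{A}h(A,[W,B])\bigr),
\]
and by the $\mathfrak{A}$-bilinearity of $h$ only the basic summands (in $\X(M)\oplus\Gamma S$, or the $\Gamma S\otimes\Gamma S$ extension) of $[W,A]$ and $[W,B]$ contribute to the relevant pairings.

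The key computational step is the explicit determination of these brackets. Proposition \ref{basiccomm} immediately gives $[\phi,X]=-\nabla_X\phi$ and $[\phi,\xi]=0$. For the v-like summand, writing $\imath(\phi)=\gamma^k\phi\wedge\nabla_{e_k}$ and invoking the graded Leibniz rule together with $[\nabla_{e_k},\nabla_X]=R^{\Lambda S}(e_k,X)+\nabla_{[e_k,X]_0}$, one obtains
\[
[\imath(\phi),X]=\gamma^k\phi\cdot R^{\Lambda S}(e_k,X)+\gamma^k\phi\cdot\jmath([e_k,X]_0)-\nabla_X(\gamma^k\phi)\cdot e_k,
\]
and a similar computation using property~(3) of the interior multiplication $\rfloor$ yields
\[
[\imath(\phi),\xi]=C(\xi,\gamma^k\phi)\,e_k+\gamma^k\phi\otimes\nabla_{e_k}\xi,
\]
with v-like basic part $C(\xi,\gamma^k\phi)\,e_k$ and an s-like part in $\Gamma S\otimes\Gamma S$.

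With these explicit expressions in hand one evaluates $W(h(A,B))$ using the $\nabla$-compatibility of $g$ and $C$ and collects terms. For $(X,Y)$, the derivative terms from $W(g(X,Y))$ and the Christoffel-type contributions hidden in $\nabla_X(\gamma^k\phi)$ conspire to cancel the v-like basic pieces coming from $[\imath(\phi),X]$ and $[\imath(\phi),Y]$, leaving only the Clifford contribution $X\nabla_Y\phi+Y\nabla_X\phi$. For $(\eta,\xi)$, the identity $\gamma^k\phi\wedge e_k(C(\eta,\xi))=\gamma^k\phi\wedge\bigl(C(\nabla_{e_k}\eta,\xi)+C(\eta,\nabla_{e_k}\xi)\bigr)$ cancels exactly against the pairings with the s-like parts of $[W,\eta]$ and $[W,\xi]$, once the sign of the graded formula is accounted for. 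For $(X,\xi)$ the three terms arise from distinct sources: the v-like piece $C(\xi,\gamma^k\phi)e_k$ of $[W,\xi]$ yields $-C(\xi,X\phi)$ via the Clifford identity $X^k\gamma_k=X$; the s-like basic piece $-\nabla_X\phi$ of $[W,X]$ produces $+C(\nabla_X\phi,\xi)$; and the endomorphism piece $\gamma^k\phi\cdot R^{\Lambda S}(e_k,X)$ of $[\imath(\phi),X]$ contributes the curvature term $\gamma^k\phi\wedge R(X,e_k)\xi$ after using $R(e_k,X)=-R(X,e_k)$.

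The main obstacle is the careful bookkeeping of the $\mathfrak{A}$-bilinearity signs of $h$ when odd functions $\gamma^k\phi$ are pulled out of either argument, and especially the correct interpretation of the non-basic pairing $h(\gamma^k\phi\cdot R^{\Lambda S}(e_k,X),\xi)$: the cleanest way is to expand the endomorphism field in a local frame according to \eqref{localendo} and pair termwise with the basic rule $h(\theta_\alpha,\xi)=C(\theta_\alpha,\xi)$. Once the three general formulas are established, the parallel-spinor statement is immediate by substituting $\nabla_X\phi=0$: the $(X,Y)$ expression vanishes outright, the term $C(\nabla_X\phi,\xi)$ drops from the $(X,\xi)$ expression, and $(\eta,\xi)$ is already zero in general.
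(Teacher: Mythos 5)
Your proposal is correct and follows essentially the same route as the paper: the paper's own proof consists of the single remark that one recalls the Lie-derivative formula \eqref{LieMetric} and Proposition \ref{basiccomm} and performs the bracket computations of the following subsection (Lemma \ref{9}), which is exactly what you do — your explicit brackets $[\imath(\phi),X]$ and $[\imath(\phi),\xi]$ agree with Lemma \ref{9} up to graded (anti)symmetry, and your sign bookkeeping in the three cases, including the pairing $h(\gamma^k\phi\owedge R(e_k,X),\xi)=\gamma^k\phi\wedge R(e_k,X)\xi$ via the rule $h(\Phi,\phi)=\Phi\phi$ used in the proof of Theorem \ref{7}, reproduces the stated formulas.
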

\begin{proof}
Recalling \eqref{LieMetric} and  Proposition \ref{basiccomm} gives the result by calculations similar to those we will perform next. 
\end{proof}

\subsection{Odd-odd and even-odd commutators}

\begin{lemma}\label{9}
Let $Y\in\X(M)$, $\phi,\psi\in\Gamma S$.
Then we have the following commutation relations:
\begin{align}
\big[Y,\imath(\phi)\big]
 &= \imath(\nabla_Y\phi)+\gamma^k\phi\owedge R(Y,e_k)-\gamma^k\phi\otimes\nabla_kY\nonumber\\
\big[\phi,\imath(\psi)\big]
 &=\tfrac{1}{2}\big\{\psi,\phi\big\} +\gamma^k\psi\otimes\nabla_k\phi\\
\big[\imath(\phi),\imath(\psi)\big]
 &= \mathfrak{B}^{(2)}(R;\phi,\psi)+\gamma^k\phi\owedge\imath(\nabla_k\psi)+\gamma^k\psi\owedge\imath(\nabla_k\phi)\nonumber
\end{align}
\end{lemma}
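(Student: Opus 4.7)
The plan is to compute each bracket by expanding the fields as explicit operators on $\Gamma\Lambda S$ using \eqref{iso}, and then applying the graded Leibniz rule together with the fundamental brackets of Proposition \ref{basiccomm}. Throughout I write $\imath(\phi)=\gamma^k\phi\otimes\nabla_{e_k}$ (sum over an orthonormal frame $\{e_k\}$ of $M$), treat $\jmath(Y)$ as $\nabla_Y$ and $\jmath(\phi)$ as $\phi\rfloor$, and exploit the three defining properties of the Levi-Civita connection on a Riemannian spin manifold: torsion-freeness, metric-compatibility, and parallelism of the Clifford map $\gamma$.

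For $[Y,\imath(\phi)]$ the graded Leibniz rule produces two kinds of contributions: $[\nabla_Y,\gamma^k\phi]=\nabla_Y(\gamma^k\phi)=\gamma^k(\nabla_Y\phi)+(\nabla_Y\gamma^k)\phi$, tensored with $\nabla_{e_k}$, and $\gamma^k\phi$ tensored with $[\nabla_Y,\nabla_{e_k}]=R^{\Lambda S}(Y,e_k)+\nabla_{[Y,e_k]_0}$, by Proposition \ref{basiccomm}. The first piece of the first kind assembles to $\imath(\nabla_Y\phi)$ and the curvature piece to $\gamma^k\phi\owedge R(Y,e_k)$. The remaining Christoffel contributions, namely $(\nabla_Y\gamma^k)\phi\otimes\nabla_{e_k}$ and, via $[Y,e_k]_0=\nabla_Y e_k-\nabla_{e_k}Y$, the part $\gamma^k\phi\otimes\nabla_{\nabla_Y e_k}$, cancel against each other by virtue of the skew-symmetry of the Levi-Civita connection coefficients in an orthonormal frame, leaving exactly $-\gamma^k\phi\otimes\nabla_{e_k}Y$.

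For $[\phi,\imath(\psi)]$, which is an anti-commutator of two odd fields, I expand $\phi\rfloor\circ\imath(\psi)+\imath(\psi)\circ\phi\rfloor$ on a function $f$. Property~$(3)$ of the interior product applied to $\gamma^k\psi\wedge\nabla_{e_k}f$ produces a boundary term $C(\phi,\gamma^k\psi)\nabla_{e_k}f$ together with a bulk term $-\gamma^k\psi\wedge\phi\rfloor\nabla_{e_k}f$. The bulk term combines with $\imath(\psi)(\phi\rfloor f)$: upon expansion, using that $\nabla$ commutes with $\rfloor$ up to the term $\nabla\phi\rfloor$ (a direct consequence of $\nabla C=0$), the two $\phi\rfloor\nabla_{e_k}f$ contributions cancel and leave $\gamma^k\psi\otimes\nabla_{e_k}\phi$. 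The boundary term assembles to the supersymmetry current $\tfrac{1}{2}\{\psi,\phi\}\in\X(M)$, the factor $\tfrac{1}{2}$ and the required symmetrization in $\phi,\psi$ arising from the $C$--$\gamma$ symmetry that underlies supersymmetry (Appendix \ref{secSUSY}).

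For $[\imath(\phi),\imath(\psi)]$ the anti-commutator on $f$ splits into second-derivative terms of the form $\gamma^j\phi\wedge\gamma^k\psi\wedge\nabla_{e_j}\nabla_{e_k}f$ (plus its $\phi\leftrightarrow\psi$ analogue) and first-derivative terms. Because $\gamma^j\phi\wedge\gamma^k\psi$ is antisymmetric under simultaneous exchange of $(j,\phi)\leftrightarrow(k,\psi)$, relabelling dummies collapses the second-derivative part to $\gamma^j\phi\wedge\gamma^k\psi\wedge[\nabla_{e_j},\nabla_{e_k}]f$, which by Proposition \ref{basiccomm} equals $\mathfrak{B}^{(2)}(R;\phi,\psi)(f)$ plus a Christoffel correction $\nabla_{[e_j,e_k]_0}$. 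The first-derivative terms, expanded by the Leibniz rule $\nabla_{e_j}(\gamma^k\psi)=\gamma^k\nabla_{e_j}\psi+(\nabla_{e_j}\gamma^k)\psi$, combine with this correction exactly as in bracket~(1), producing $\gamma^k\phi\owedge\imath(\nabla_k\psi)+\gamma^k\psi\owedge\imath(\nabla_k\phi)$. The main obstacle in all three cases is the same: the bookkeeping of terms involving $\nabla\gamma^k$ and $\nabla_{[\cdot,e_k]_0}$ must close on itself. These cancellations are forced by torsion-freeness, metric-compatibility, and parallelism of $\gamma$, hence are automatic, but require care at the index level to ensure that no spurious trace term survives.
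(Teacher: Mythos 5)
Your proposal is correct and follows essentially the same route as the paper: both reduce everything to the fundamental brackets of Proposition \ref{basiccomm} and the graded Leibniz rule, and both hinge on the same key cancellation, namely that $(\nabla_Y\gamma^k)\phi\otimes e_k$ annihilates $\gamma^k\phi\otimes\nabla_Y e_k$ (the paper phrases this as parallelism of the projection $\pi_1$ in \eqref{spinordiagram}, you phrase it as skew-symmetry of the connection coefficients in an orthonormal frame --- the same fact). The only cosmetic difference is that you evaluate the operators on a test function $h\in\Gamma\Lambda S$ while the paper manipulates the module-valued expressions $\gamma^k\phi\otimes e_k$ directly; the term-by-term bookkeeping is identical.
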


\begin{proof}
The proof needs the following observation. Let $\eta$ be an arbitrary spinor and  $\omega^k_l$  the local (skew-symmetric) connection form with respect to  the orthonormal frame $\{e_i\}$.
\begin{equation}
\gamma^k\eta\otimes \nabla e_k=\omega_k^l\gamma^k\eta\otimes e_l=-(-\omega^l_k\gamma^k)\eta\otimes e_l=-(\nabla\gamma^k)\eta\otimes e_k.
\end{equation}
This is the same as to say that the projection $\pi_1$ in diagram \eqref{spinordiagram} is parallel. We make frequent use of Proposition \ref{basiccomm} to get 
\begin{align*}
\big[\phi,\imath(\psi)\big]
& = \big[\phi,\gamma^k\psi\otimes e_k\big]
  = C(\phi,\gamma^k\psi) e_k-\gamma^k\psi\otimes\big[\phi,e_k\big]\\
& =\tfrac{1}{2}\big\{\phi,\psi\big\}+\gamma^k\psi\otimes\nabla_k\phi.
\end{align*}
In the same way we calculate
\begin{align*}
\big[Y,\imath(\psi)\big]&= \big[Y,\gamma^k\psi\otimes e_k\big] \\
&=  \nabla_Y(\gamma^k\psi)\otimes e_k+\gamma^k\psi\otimes\big[Y,e_k\big]\\
&=  \gamma^k\nabla_Y\psi\otimes e_k+(\nabla_Y\gamma^k)\psi\otimes e_k
  +\gamma^k\psi\otimes\big[Y,e_k\big]_0
  +\gamma^k\psi\owedge R(Y,e_k)\\
&=  \imath(\nabla_Y\psi)+(\nabla_Y\gamma^k)\psi\otimes e_k
  +\gamma^k\psi\otimes\nabla_Ye_k-\gamma^k\psi\otimes\nabla_{k}Y 
 +\gamma^k\psi\owedge R(Y,e_k)\\
&=  \imath(\nabla_Y\psi)-\gamma^k\psi\otimes\nabla_{k}Y
  +\gamma^k\psi\owedge R(Y,e_k).
\end{align*}
These two brackets together with Definition \ref{E-endomorphism} give the third one
\begin{align*}
\big[\imath(\phi), \imath(\psi)\big]
  &= \big[\gamma^k\phi\otimes e_k,\gamma^l\psi\otimes e_l\big]\\
 &=  \gamma^k\phi\wedge\nabla_k(\gamma^l\psi)\otimes e_l
   +\gamma^l\psi\owedge\big[e_l,\gamma^k\phi\otimes e_k\big]\nonumber\\
&= \gamma^k\phi\wedge\nabla_k(\gamma^l\psi)\otimes e_l
   +\gamma^l\psi\owedge\imath(\nabla_l\phi)
   -\gamma^l\psi\wedge\gamma^k\phi\otimes\nabla_{e_k}e_l 
 +\gamma^l\psi\wedge\gamma^k\phi\owedge R_{lk}\nonumber\\
&= \gamma^k\phi\wedge\gamma^l\nabla_k\psi\otimes e_l
   +\gamma^l\psi\owedge\imath(\nabla_l\phi)
   + \mathfrak{B}^{(2)}(R;\phi,\psi)\nonumber\\
&= \mathfrak{B}^{(2)}(R;\phi,\psi)
   +\gamma^k\phi\owedge\imath(\nabla_k\psi)+\gamma^l\psi\owedge\imath(\nabla_l\phi).
\end{align*}
\end{proof}

\begin{theorem}\label{oddoddcomm}
For the fields $\o_\pm(\phi)$ we have
\begin{equation}\begin{aligned}
\big[\o_\pm(\phi),\o_\pm(\psi)\big] =\ &   
\pm\big\{\phi,\psi\big\}+ \mathfrak{B}^{(2)}(R;\phi,\psi) 
   \pm\gamma^k\phi\owedge\o_\pm(\nabla_k\psi)
   \pm\gamma^k\psi\owedge\o_\pm(\nabla_k\phi)
\end{aligned}\label{plusplus}\end{equation}
and
\begin{equation}\begin{aligned}
\big[\o_+(\phi) ,\o_-(\psi)\big] =\ &   -\mathfrak{B}^{(2)}(R;\phi,\psi) 
  -\gamma^k\psi\owedge\o_+(\nabla_k\phi)+\gamma^k\phi\owedge\o_-(\nabla_k\psi)
\end{aligned}\label{plusminus}\end{equation}
\end{theorem}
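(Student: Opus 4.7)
The plan is to expand each bracket using bilinearity and the definition $\o_\pm(\phi) = \phi \pm \imath(\phi)$, so that
\begin{equation*}
[\o_\pm(\phi),\o_\pm(\psi)] = [\phi,\psi] \pm [\phi,\imath(\psi)] \pm [\imath(\phi),\psi] + [\imath(\phi),\imath(\psi)],
\end{equation*}
and analogously
\begin{equation*}
[\o_+(\phi),\o_-(\psi)] = [\phi,\psi] - [\phi,\imath(\psi)] + [\imath(\phi),\psi] - [\imath(\phi),\imath(\psi)].
\end{equation*}
Proposition \ref{basiccomm} supplies $[\phi,\psi]=0$, and Lemma \ref{9} supplies the three nontrivial brackets. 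The term $[\imath(\phi),\psi]$ is obtained from $[\phi,\imath(\psi)]$ by graded symmetry: both arguments are odd, so the super-bracket is symmetric, and one simply swaps $\phi \leftrightarrow \psi$ in the formula of Lemma \ref{9}.

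Substituting these expressions, the contributions $\tfrac12\{\psi,\phi\}$ and $\tfrac12\{\phi,\psi\}$ from the two mixed brackets combine, using symmetry of $\{\cdot,\cdot\}$, to $\pm\{\phi,\psi\}$ in the equal-sign case and to zero in the mixed-sign case. Likewise the curvature term $\mathfrak{B}^{(2)}(R;\phi,\psi)$ arising from $[\imath(\phi),\imath(\psi)]$ contributes with the sign appropriate to each combination. What remains is the collection
\begin{equation*}
\pm\,\gamma^k\psi\otimes\nabla_k\phi \pm\gamma^k\phi\otimes\nabla_k\psi
+ \gamma^k\phi\owedge\imath(\nabla_k\psi) + \gamma^k\psi\owedge\imath(\nabla_k\phi)
\end{equation*}
(with appropriate overall signs in each of the three cases).

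The key observation is then the identity
\begin{equation*}
\pm\,\gamma^k\phi\otimes\nabla_k\psi + \gamma^k\phi\owedge\imath(\nabla_k\psi)
= \pm\,\gamma^k\phi\owedge\o_\pm(\nabla_k\psi),
\end{equation*}
which is just the definition of $\o_\pm$ applied to the spinor $\nabla_k\psi$, and analogously with $\phi$ and $\psi$ interchanged. Reorganizing the four terms above accordingly gives $\pm\gamma^k\phi\owedge\o_\pm(\nabla_k\psi) \pm \gamma^k\psi\owedge\o_\pm(\nabla_k\phi)$ in the equal-sign cases, and $+\gamma^k\phi\owedge\o_-(\nabla_k\psi) - \gamma^k\psi\owedge\o_+(\nabla_k\phi)$ in the mixed case, matching \eqref{plusplus} and \eqref{plusminus} respectively.

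The main obstacle is bookkeeping rather than conceptual: one has to distribute the $\pm$ signs correctly across the four expanded terms, invoke the symmetry $\{\phi,\psi\} = \{\psi,\phi\}$ inherent in the supersymmetry pairing, and respect the convention distinguishing $\otimes$ from $\owedge$ when folding the order-zero spinor pieces $\gamma^k\phi\otimes\nabla_k\psi$ back into $\gamma^k\phi\owedge\o_\pm(\nabla_k\psi)$. Once this bookkeeping is done, no further computation beyond Lemma \ref{9} is required.
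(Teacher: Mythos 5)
Your proposal is correct and follows essentially the same route as the paper: expand the bracket bilinearly, use $[\phi,\psi]=0$ and the three brackets of Lemma \ref{9} (with the graded symmetry of the odd--odd bracket to swap arguments), and then fold the zeroth-order pieces $\gamma^k\phi\otimes\nabla_k\psi$ together with $\gamma^k\phi\owedge\imath(\nabla_k\psi)$ back into $\gamma^k\phi\owedge\o_\pm(\nabla_k\psi)$. The paper merely organizes the sign bookkeeping with a single parameter $\varepsilon\in\{\pm1\}$ to treat both cases at once; no substantive difference.
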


\begin{proof} By Lemma \ref{9} we get with $[\phi,\psi]=0$
\begin{align*}
\big[\o_{\pm}(\phi),\o_{\pm\varepsilon}(\psi)\big] =\ &
\big[\phi\pm\imath(\phi),\psi\pm\varepsilon\imath(\psi)\big]\\
=\ & \pm\,\varepsilon\big[\phi,\imath(\psi)\big]\pm\big[\imath(\phi),\psi\big]
   +\varepsilon\big[\imath(\phi),\imath(\psi)\big]\\
=\ & \pm\tfrac{1+\varepsilon}{2}\{\phi,\psi\}
    +\varepsilon\mathfrak{B}^{(2)}(R;\phi,\psi)
     \pm\gamma^k\phi\owedge\big(\nabla_k\psi\pm\varepsilon\imath(\nabla_k\psi)\big)
	\\&\quad \pm\varepsilon\gamma^k\psi\owedge\big(\nabla_k\phi\pm\imath(\nabla_k\phi)\big)\\
=\ & \pm\tfrac{1+\varepsilon}{2}\{\phi,\psi\}
    +\varepsilon\mathfrak{B}^{(2)}(R;\phi,\psi)
    \pm\,\gamma^k\phi\owedge\o_{\pm\varepsilon}(\nabla_k\psi) 
  \pm\,\varepsilon\gamma^k\psi\owedge\o_{\pm}(\nabla_k\phi),
\end{align*}
which for $\varepsilon\in\{\pm1\}$ proves the proposition.
\end{proof}

If we restrict ourself to parallel spinors certain terms in \eqref{plusplus} and \eqref{plusminus} vanish and we get

\begin{corollary}
Let $\phi$ and $\psi$ be parallel spinors on the manifold $M$, then the commutators in Theorem \ref{oddoddcomm} reduce to
\begin{align}
\big[\o_\pm(\phi),\o_\pm(\psi)\big] &=
   \pm\big\{\phi,\psi\big\}+\mathfrak{B}^{(2)}(R;\phi,\psi)
   \nonumber \\
\big[\o_+(\phi),\o_-(\psi)\big] &=-\mathfrak{B}^{(2)}(R;\phi,\psi)
\end{align}
\end{corollary}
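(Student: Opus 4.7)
The plan is to deduce this corollary as an immediate specialization of Theorem \ref{oddoddcomm}. No new machinery is needed; everything comes from observing that the parallel hypothesis $\nabla\phi = 0 = \nabla\psi$ forces $\nabla_k\phi = 0$ and $\nabla_k\psi = 0$ for every vector $e_k$ in the chosen orthonormal frame.

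Next I would exploit linearity of the assignment $\o_\pm$ from \eqref{oddansatz1}: since $\o_\pm(\chi) = \chi \pm \imath(\chi)$ and both $\chi \mapsto \chi$ and the inclusion $\imath$ from \eqref{spinordiagram} are linear, we have $\o_\pm(0) = 0$. Applying this with $\chi = \nabla_k\phi$ and $\chi = \nabla_k\psi$ shows that every term of the form $\gamma^k\phi\owedge\o_{\pm\varepsilon}(\nabla_k\psi)$ and $\gamma^k\psi\owedge\o_\pm(\nabla_k\phi)$ appearing in the general formulas \eqref{plusplus} and \eqref{plusminus} vanishes identically.

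Substituting into \eqref{plusplus} (that is, taking $\varepsilon = +1$ in the proof of Theorem \ref{oddoddcomm}) leaves only the constant term $\pm\{\phi,\psi\}$ together with the curvature term $\mathfrak{B}^{(2)}(R;\phi,\psi)$, which gives the first identity. Substituting into \eqref{plusminus} (the case $\varepsilon = -1$), the constant bracket $\{\phi,\psi\}$ is absent because of the prefactor $\tfrac{1 + \varepsilon}{2} = 0$, and both derivative terms drop out for the same reason, yielding $-\mathfrak{B}^{(2)}(R;\phi,\psi)$.

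There is no real obstacle here: the content of the corollary is purely a book-keeping simplification of Theorem \ref{oddoddcomm} under the parallel assumption, and the only fact one uses beyond that theorem is the linearity of $\o_\pm$.
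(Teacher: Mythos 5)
Your proof is correct and follows exactly the route the paper takes: the corollary is obtained by specializing Theorem \ref{oddoddcomm} to parallel spinors, so that $\nabla_k\phi=\nabla_k\psi=0$ kills the terms $\gamma^k\phi\owedge\o_{\pm\varepsilon}(\nabla_k\psi)$ and $\gamma^k\psi\owedge\o_{\pm}(\nabla_k\phi)$, leaving only the supersymmetry bracket (with its $\tfrac{1+\varepsilon}{2}$ prefactor) and the curvature term $\mathfrak{B}^{(2)}(R;\phi,\psi)$. The only cosmetic remark is that in the mixed case the derivative terms vanish because of parallelism, not because of the prefactor $\tfrac{1+\varepsilon}{2}=0$ (which only removes $\{\phi,\psi\}$), but your first paragraph already covers this correctly.
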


The next step is to combine the even Killing fields and the odd fields defined above. First we calculate the commutator for arbritrary vector fields and spinors before we restrict ourself to Killing vector fields and parallel spinors.

\begin{lemma}
For all $X+\phi\in\X(M)\oplus\Gamma S$ we have with $\e(X)=X-(\nabla X)_\a$ and $\o_\pm(\phi)=\phi\pm\imath(\phi)$
\begin{equation}\begin{aligned}
\big[\e(X),\o_\pm(\phi)\big] =\ & \o_\pm(\e(X)\phi)\pm \gamma^k\phi\owedge \left(R(X,e_k)+\nabla_k(\nabla X)_\a\right)\\
& \mp\big( (\nabla X)_{\rm s}(e^k)\big)\phi\otimes e_k 
\end{aligned}\label{hier}\end{equation}
\end{lemma}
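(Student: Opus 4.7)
The plan is to expand the bracket by bilinearity and reduce everything to the basic brackets of Propositions \ref{basiccomm} and \ref{unique} together with Lemma \ref{9}. From
\begin{equation*}
[\e(X),\o_\pm(\phi)]=[X,\phi]\pm[X,\imath(\phi)]-[(\nabla X)_\a,\phi]\mp[(\nabla X)_\a,\imath(\phi)]
\end{equation*}
three of the four summands follow directly: $[X,\phi]=\nabla_X\phi$ from Proposition \ref{basiccomm}; $[X,\imath(\phi)]=\imath(\nabla_X\phi)+\gamma^k\phi\owedge R(X,e_k)-\gamma^k\phi\otimes\nabla_kX$ from Lemma \ref{9}; and, since $(\nabla X)_\a$ is $C$-skew-symmetric (so $(\nabla X)_\a^{T_C}=-(\nabla X)_\a$), Proposition \ref{unique} yields $[(\nabla X)_\a,\phi]=(\nabla X)_\a\phi$.

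The remaining bracket $[(\nabla X)_\a,\imath(\phi)]$ I would compute by unfolding the derivation structure. For an endomorphism field $A\in\Gamma\,{\rm End}\,S$ (acting by $\vartheta\mapsto A\vartheta$ on $\vartheta\in\Gamma S$ and by zero on scalars) and a v-like field $g\otimes Y$ with $g\in\Gamma\Lambda S$, $Y\in\X(M)$, the Leibniz rule on $\Gamma\Lambda S$ together with $[A,\nabla_Y]=-\nabla_YA$ (which follows from $\nabla_Y(A\vartheta)=(\nabla_YA)\vartheta+A\nabla_Y\vartheta$) gives
\begin{equation*}
[A,g\otimes Y]=A(g)\otimes Y-g\owedge\nabla_YA.
\end{equation*}
Specializing to $A=(\nabla X)_\a$ and $g\otimes Y=\gamma^k\phi\otimes e_k$ yields
\begin{equation*}
[(\nabla X)_\a,\imath(\phi)]=(\nabla X)_\a\gamma^k\phi\otimes e_k-\gamma^k\phi\owedge\nabla_k(\nabla X)_\a.
\end{equation*}

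Assembling the four pieces and using $\e(X)\phi=\nabla_X\phi-(\nabla X)_\a\phi$, so that
\begin{equation*}
\o_\pm(\e(X)\phi)=\nabla_X\phi-(\nabla X)_\a\phi\pm\imath(\nabla_X\phi)\mp\gamma^k(\nabla X)_\a\phi\otimes e_k,
\end{equation*}
the proof reduces to the purely algebraic identity
\begin{equation*}
\gamma^k\phi\otimes\nabla_kX+(\nabla X)_\a\gamma^k\phi\otimes e_k=\gamma^k(\nabla X)_\a\phi\otimes e_k+(\nabla X)_{\rm s}(e^k)\phi\otimes e_k.
\end{equation*}
Factoring the common $\otimes e_k$ and decomposing $\nabla_kX=(\nabla X)_\a(e_k)+(\nabla X)_{\rm s}(e_k)$, this follows from the Clifford commutation relation
\begin{equation*}
[\gamma^k,(\nabla X)_\a]\phi=\gamma\bigl((\nabla X)_\a(e^k)\bigr)\phi,
\end{equation*}
which is the infinitesimal spin representation applied to the skew-symmetric endomorphism $\tfrac14(\nabla X)_{[ij]}\gamma^{ij}$.

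The main obstacle is identifying the correct form of $[(\nabla X)_\a,\imath(\phi)]$, since $(\nabla X)_\a$ annihilates scalar functions but acts nontrivially (by Clifford multiplication) on the spinor factor inside $\imath(\phi)$ and at the same time gets covariantly differentiated when pushed past $\nabla_{e_k}$. Once this is in place, the Clifford identity above delivers precisely the cancellation that converts the naive $\mp\gamma^k\phi\otimes\nabla_kX$ and $\mp(\nabla X)_\a\gamma^k\phi\otimes e_k$ contributions into the clean $\o_\pm(\e(X)\phi)$ plus the curvature-and-derivative term $\pm\gamma^k\phi\owedge\nabla_k(\nabla X)_\a$ plus the symmetric remainder $\mp(\nabla X)_{\rm s}(e^k)\phi\otimes e_k$.
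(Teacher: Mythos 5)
Your proposal is correct and follows essentially the same route as the paper: expand by bilinearity, use Proposition \ref{basiccomm}, Proposition \ref{unique} and Lemma \ref{9} for three of the brackets, compute $[(\nabla X)_\a,\imath(\phi)]=(\nabla X)_\a\gamma^k\phi\otimes e_k-\gamma^k\phi\owedge\nabla_k(\nabla X)_\a$, and finish with the spin-representation identity that converts the leftover terms into the symmetric part $(\nabla X)_{\rm s}(e^k)\phi\otimes e_k$. One small slip: with the paper's convention $[A,v]=Av$ for $A\in\mathfrak{so}(V)$, the relation you need is $[(\nabla X)_\a,\gamma^k]\phi=\gamma\bigl((\nabla X)_\a(e^k)\bigr)\phi$ (your displayed commutator has the arguments in the reversed order, hence the wrong sign, even though the algebraic identity you actually state and use is correct).
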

\begin{proof}
With Proposition \ref{unique}\ and Lemma \ref{9}\ we have 
\begin{align*}
\big[\e(X),\o_\pm(\phi)\big]&= \big[X-(\nabla X)_\a,\phi\pm\imath(\phi)\big] \\
&=  \big[\e(X),\phi\big]
   \pm\,\big[X,\imath(\phi)]\mp[(\nabla X)_\a,\imath(\phi)\big]\\
&=  \e(X)\phi
  \pm\imath(\nabla_X\phi)\,\pm\,\gamma^k\phi \owedge R(X,e_k)
  \mp\gamma^k\phi\otimes\nabla_kX
 \mp(\nabla X)_\a\gamma^k\phi\otimes e_k \\&\quad
  \mp\gamma^k\phi\owedge \big[(\nabla X)_\a,e_k\big]\\
&= \e(X)\phi\,\pm\, \imath(\nabla_X\phi)
    \,\mp\,\gamma^k(\nabla X)_\a\phi\otimes e_k 
\pm\,\gamma^k\phi\owedge \left(R(X,e_k) \nabla_k(\nabla X)_\a\right) \\&\quad
  \mp\left(\gamma^k\phi\otimes\nabla_kX
  +\big[(\nabla X)_\a,\gamma^k\big]\phi\otimes e_k\right)\\
&= \e(X)\phi\,\pm\, \imath(\e(X)\phi)
  \pm\,\gamma^k\phi\owedge\left(R(X,e_k)+ \nabla_k(\nabla X)_\a\right)\\
&\quad \mp\,\left(\gamma^k\phi\otimes\nabla_kX
  +\big[(\nabla X)_\a,\gamma^k\big]\phi\otimes e_k\right).
\end{align*}
We recall
$[A,v]=Av$ for all $A\in\mathfrak{so}(V)$ and $v\in V$ considered as subsets of  $\cl(V)$.
So the last term can be rewritten as
\begin{align*}
\gamma^k  \phi\otimes \nabla_kX+\big[(\nabla X)_\a \gamma^k\big]\phi\otimes e_k 
& = \gamma^k\phi\otimes\nabla_kX+g^{kn}\big((\nabla X)_\a(e_n)\big)\phi\otimes e_k \\
&= g^{ij}g(\nabla_kX,e_i)\gamma^k\phi\otimes e_j
 +\tfrac{1}{2}\big(g^{kj}g(\nabla_jX,e_i)\gamma^i\phi\otimes e_k \\
 &\quad -g^{kj}g(e_j,\nabla_iX)\gamma^i\phi\otimes e_k\big) \\
&= \tfrac{1}{2}g^{kj}g(\nabla_iX,e_j)\gamma^i\phi\otimes e_k
 +\tfrac{1}{2}g^{kj}g(\nabla_jX,e_i)\gamma^i\phi\otimes e_k\\
&= \tfrac{1}{2}g^{kj}\big(g(\nabla_iX,e_j)+g(\nabla_jX,e_i)\big)\gamma^i\phi\otimes e_k\\
&= g^{kj}\big((\nabla X)_{\rm s}(e_j)\big)\phi\otimes e_k 
\end{align*}
which proves the statement.
\end{proof}

\begin{theorem}
Let $\mathcal{K}_0$ be the set of Killing vector fields  and $\mathcal{K}_1$ be the set of parallel spinors on $M$.  For $X\in\mathcal{K}_0$ and $\varphi\in\mathcal{K}_1$ we have
\begin{equation}
\big[\e(X),\o_\pm(\phi)\big]=\o_\pm(\e(X)\phi) \label{evenodd}
\end{equation}
and the  result  is in $\o_\pm(\mathcal{K}_1)$.
\end{theorem}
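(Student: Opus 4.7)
The plan is to apply the general commutator formula \eqref{hier} from the preceding lemma and show that, under the assumptions that $X$ is a Killing vector field and $\phi$ is a parallel spinor, every term on the right-hand side except $\o_\pm(\e(X)\phi)$ vanishes, and that $\e(X)\phi$ is itself a parallel spinor.

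First I would handle the extra terms in \eqref{hier}. The term $((\nabla X)_{\rm s}(e^k))\phi\otimes e_k$ vanishes immediately because by Lemma \ref{lemma1}(2) a vector field $X$ is Killing if and only if $\nabla X$ is skew-symmetric, i.e.\ $(\nabla X)_{\rm s}=0$. For the term $\gamma^k\phi\owedge\bigl(R(X,e_k)+\nabla_k(\nabla X)_\a\bigr)$, I invoke Lemma \ref{lemma1}(3), which gives $\nabla_Y(\nabla X)=-R(X,Y)$ for Killing $X$; combined with $(\nabla X)_\a=\nabla X$ from Lemma \ref{lemma1}(2) this yields $\nabla_k(\nabla X)_\a=-R(X,e_k)$, so the bracket of curvature and covariant derivative vanishes identically. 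Thus \eqref{hier} collapses to $\bigl[\e(X),\o_\pm(\phi)\bigr]=\o_\pm(\e(X)\phi)$.

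Next I need to show that $\e(X)\phi$ lies in $\mathcal{K}_1$, i.e.\ is parallel. Unwinding the action of the even field $\e(X)=X-(\nabla X)_\a$ on the spinor $\phi$ (as in the proof of the previous lemma), we have $\e(X)\phi=\nabla_X\phi-(\nabla X)_\a\phi$. For any $Y\in\X(M)$,
\begin{equation*}
\nabla_Y\bigl(\e(X)\phi\bigr)=\nabla_Y\nabla_X\phi-\bigl(\nabla_Y(\nabla X)_\a\bigr)\phi-(\nabla X)_\a\nabla_Y\phi.
\end{equation*}
Since $\phi$ is parallel, $\nabla_X\phi=\nabla_Y\phi=0$, so the first and third summands vanish. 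Using Lemma \ref{lemma1}(3) once more, the middle term equals $R(X,Y)\phi$. But a parallel spinor satisfies $R\cdot\phi=0$ as a consequence of $\nabla^2\phi=0$ and the definition of curvature on the spinor bundle, so $R(X,Y)\phi=0$. Hence $\nabla_Y(\e(X)\phi)=0$ for every $Y$, i.e.\ $\e(X)\phi\in\mathcal{K}_1$, and therefore $\o_\pm(\e(X)\phi)\in\o_\pm(\mathcal{K}_1)$.

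There is no real obstacle here: the statement follows by straightforward substitution into the formula from the previous lemma, together with the two standard facts about Killing vector fields ($(\nabla X)_{\rm s}=0$ and $\nabla_Y(\nabla X)=-R(X,Y)$) and the parallel-spinor identity $R\cdot\phi=0$. The only point requiring slight care is the interpretation of $\e(X)\phi$ as the spinor $\nabla_X\phi-(\nabla X)_\a\phi$ obtained from the bracket $[\e(X),\phi]$ via Propositions \ref{basiccomm} and \ref{unique}.
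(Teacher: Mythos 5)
Your proof is correct and follows the paper's argument for the first half essentially verbatim: both extra summands in \eqref{hier} are killed by parts (2) and (3) of Lemma \ref{lemma1} because $X$ is Killing. For the second half the paper instead introduces the auxiliary identity $\big[\nabla_X-\nabla X,\nabla_Y\big]\psi=\nabla_{[X,Y]_0}\psi$ (equation \eqref{LieCov}), valid for arbitrary spinors $\psi$ and Killing $X$, and then lets the parallelism of $\phi$ annihilate both resulting terms $\nabla_{[X,Y]_0}\phi$ and $(\nabla_X-\nabla X)(\nabla_Y\phi)$; you reach the same conclusion by the Leibniz rule together with $\nabla_Y(\nabla X)=-R(X,Y)$ and the integrability condition $R(X,Y)\phi=0$ for parallel spinors. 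The two routes are equivalent --- \eqref{LieCov} is precisely the statement $\nabla_Y(\nabla X)=-R(X,Y)$ transported to the spinor bundle via the assignment $A\mapsto\frac{1}{4}A_{ij}\gamma^{ij}$ --- so nothing is missing; your version merely makes explicit the use of the curvature identity $R_{kl}\phi=0$ where the paper absorbs it into the commutator lemma.
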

\begin{proof}
The vanishing of the second and third summand in \eqref{hier} follows  from Lemma \ref{lemma1}, because $X\in\mathcal{K}_0$.

For the second part we have to show $\nabla_Y(\e(X)\phi)=0$ for all vector fields $Y$ and $\phi\in\mathcal{K}_1$.
The proof needs the following fact which is a supplement to Lemma \ref{lemma1} and the key ingredient for the proof of Lemma \ref{lemma1} part \ref{3}.
\begin{lemma}
For all  Killing vector fields $X$, vector fields $Y$ and  spinors $\psi$ we have
\begin{equation}
\big[\nabla_X-\nabla X,\nabla_Y\big]\psi=\nabla_{[X,Y]_0}\psi\, . \label{LieCov}
\end{equation}
\end{lemma}
For a vector field $Y$ we get
\begin{equation}
\nabla_Y(\e(X)\phi)=-\nabla_Y(\overbrace{\nabla_X\phi}^{=0}-\nabla X\phi)=\overbrace{\nabla_{[X,Y]_0}\phi}^{=0}-(\nabla_X-\nabla X)(\overbrace{\nabla_Y\phi}^{=0})=0.
\end{equation}
This shows that the commutator is an element in $\mathcal{K}_1$.
\end{proof}


\subsection{Further commutator identities}\label{Secthirdorder}

In this section we  calculate commutators which connect spinors and vector fields with the third order field $\mathfrak{B}^{(2)}(R;\ldotp,\ldotp)$.

\begin{proposition}\label{thirdordercomm}
The following relations hold for a Killing vector field $X$ and parallel spinors $\varphi,\psi,\eta$.
\begin{align}
\left[\e(X),\mathfrak{B}^{(2)}(R;\varphi,\psi)\right]&=\mathfrak{B}^{(2)}(R;\e(X)\varphi,\psi))
                                                                                           +\mathfrak{B}^{(2)}(R;\e(X)\psi,\varphi))
\label{thirdorderandeven} \\
\big[\o(\varphi),\mathfrak{B}^{(2)}(R;\eta,\xi)\big]& =  \mathfrak{B}^{(3)}(\nabla R;\varphi,\eta,\xi)
-R^m{}_{jkl}\gamma^j\varphi\wedge\gamma^k\eta\wedge\gamma^l\xi\otimes\nabla_m
\label{thirdorderandodd}
\end{align}
\end{proposition}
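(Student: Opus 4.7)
Proof proposal. The strategy in both parts is to expand
\[
\mathfrak{B}^{(2)}(R;\varphi,\psi) \;=\; \gamma^i\varphi\wedge\gamma^j\psi\owedge R_{ij}
\]
and distribute the outer bracket across the three factors using the graded Leibniz rules encoded in Proposition \ref{basiccomm}, Proposition \ref{unique}, and Lemma \ref{9}. I would freely use that parallel spinors are annihilated by $\nabla$, that for a Killing vector field $X$ the endomorphism $\nabla X=(\nabla X)_\a$ is skew with $\nabla_Y(\nabla X)_\a=-R(X,Y)$ by Lemma \ref{lemma1}, and the identity $[\nabla_X-\nabla X,\nabla_Y]=\nabla_{[X,Y]_0}$ on spinors already recorded in \eqref{LieCov}.

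For \eqref{thirdorderandeven} I would write
\[
[\e(X),\gamma^i\varphi\wedge\gamma^j\psi\owedge R_{ij}] \;=\; \e(X)\bigl(\gamma^i\varphi\wedge\gamma^j\psi\bigr)\owedge R_{ij} \;+\; \gamma^i\varphi\wedge\gamma^j\psi\owedge[\e(X),R_{ij}].
\]
Each factor $\e(X)(\gamma^i\varphi)$ is computed by Proposition \ref{unique}; since $\varphi$ is parallel only the spin/Clifford part contributes, and the $\mathfrak{spin}$-compatibility of Clifford multiplication with the Levi-Civita connection lets me rewrite $\e(X)(\gamma^i\varphi)=\gamma^i\e(X)\varphi+(\text{frame rotation})^i{}_k\gamma^k\varphi$. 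The bracket $[\e(X),R_{ij}]=\nabla_X R_{ij}-[(\nabla X)_\a,R_{ij}]$ is exactly the Kosmann-style spinor Lie derivative of $R$, which vanishes because Killing vector fields preserve the curvature tensor. The residual frame-rotation summands acting on the lower indices of $R_{ij}$ then recombine with the above cancellation, and the net result is $\mathfrak{B}^{(2)}(R;\e(X)\varphi,\psi)+\mathfrak{B}^{(2)}(R;\varphi,\e(X)\psi)$.

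For \eqref{thirdorderandodd} I would split $\o(\varphi)=\varphi+\imath(\varphi)$ and treat the two contributions separately. The bracket with $\varphi$ is handled via the interior-product rules of Lemma \ref{9}: $\varphi$ pairs with the wedge factors through $C$, while $[\varphi,R_{ij}]$ is read off from Proposition \ref{unique} together with the skew-symmetry of $R_{ij}$ with respect to $C$; these pieces eventually contract with the indices of $R$ to produce the v-like residue $R^m{}_{jkl}\gamma^j\varphi\wedge\gamma^k\eta\wedge\gamma^l\xi\otimes\nabla_m$. The bracket with $\imath(\varphi)=\gamma^k\varphi\otimes e_k$ is then computed again with Lemma \ref{9}: the key contribution is $\gamma^k\varphi\wedge\gamma^i\eta\wedge\gamma^j\xi\owedge[e_k,R_{ij}]$, where $[e_k,R_{ij}]=\nabla_k R_{ij}$ produces exactly $\mathfrak{B}^{(3)}(\nabla R;\varphi,\eta,\xi)$, while the terms of the form $\gamma^k\psi\otimes\nabla_k\varphi$ from Lemma \ref{9} are killed by the parallelness hypothesis on $\varphi,\eta,\xi$.

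The main obstacle is the bookkeeping in \eqref{thirdorderandodd}: the curvature endomorphism $R_{ij}$ plays simultaneously two roles—as a vector field on $\hat M$ (so its bracket with $\imath(\varphi)$ generates both a derivative and a commutator piece) and as an operator on $\Gamma S$ (so its bracket with $\varphi$ produces a spinor)—and verifying that everything except the two surviving terms cancels requires using the first and second Bianchi identities together with $\mathfrak{spin}$-invariance of $C$. By contrast, \eqref{thirdorderandeven} is conceptually transparent once one recognises the cancellation driven by $\mathcal{L}_X R=0$ for Killing $X$.
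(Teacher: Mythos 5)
Your treatment of \eqref{thirdorderandeven} is essentially the paper's own argument: expand $[\e(X),\cdot]=[\nabla_X,\cdot]-[(\nabla X)_\a,\cdot]$, use the covariance of $\mathfrak{B}^{(2)}$ under $\nabla_X$, and cancel against the Killing identity $[\nabla X,R(Y,Z)]=(\nabla_XR)(Y,Z)+R(\nabla_YX,Z)+R(Y,\nabla_ZX)$ (which is exactly $\mathcal{L}_XR=0$ unpacked); the frame-rotation terms are absorbed via $[\nabla X,\gamma^l]\psi\owedge R_{kl}=-\gamma^m\psi\owedge R(e_k,\nabla_mX)$. That part is sound.

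Your sketch of \eqref{thirdorderandodd} contains a genuine error of bookkeeping. You attribute the v-like residue $-R^m{}_{jkl}\gamma^j\varphi\wedge\gamma^k\eta\wedge\gamma^l\xi\otimes\nabla_m$ to the bracket $[\varphi,\mathfrak{B}^{(2)}(R;\eta,\xi)]$, saying the pairings of $\varphi$ with the wedge factors "contract with the indices of $R$" to produce it. This cannot happen: $\varphi$ acts as the s-like derivation $\varphi\rfloor$ and $\mathfrak{B}^{(2)}(R;\eta,\xi)$ is s-like, so their bracket is again s-like and can never contain a $\otimes\nabla_m$ term. What that bracket actually produces is $\gamma^l\eta\owedge R(\{\varphi,\xi\},e_l)+\gamma^l\xi\owedge R(\{\varphi,\eta\},e_l)-\gamma^k\eta\wedge\gamma^l\xi\otimes R_{kl}\varphi$ (the paper's \eqref{thirdorderandodd1}), and for parallel spinors \emph{all} of it vanishes --- the last term by the integrability condition $R_{kl}\varphi=0$, the first two because the Dirac currents $\{\varphi,\eta\},\{\varphi,\xi\}$ are parallel vector fields and hence lie in the kernel of $R$ by \eqref{CurvatureKilling}. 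You invoke neither fact. Correspondingly, your expansion of $[\imath(\varphi),\mathfrak{B}^{(2)}(R;\eta,\xi)]$ is missing the term in the graded Leibniz rule where the derivation $R_{kl}$ acts on the coefficient $\gamma^m\varphi$ of $\imath(\varphi)=\gamma^m\varphi\otimes\nabla_m$: this produces $-\gamma^k\eta\wedge\gamma^l\xi\wedge R_{kl}(\gamma^m\varphi)\otimes\nabla_m$ with $R_{kl}(\gamma^m\varphi)=[R_{kl},\gamma^m]\varphi+\gamma^mR_{kl}\varphi=R^m{}_{nkl}\gamma^n\varphi$ for parallel $\varphi$, and this --- not the $\varphi$-bracket --- is the source of the $\mathfrak{D}$-type residue. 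Since you yourself identify "the double role of $R_{ij}$" as the main obstacle, note that it is precisely this term that your sketch drops; as written the computation would close with only the $\mathfrak{B}^{(3)}(\nabla R;\varphi,\eta,\xi)$ piece and a spurious s-like remainder, not the stated right-hand side.
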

\begin{lemma} 
For  spinors $\varphi,\xi$ and $\eta$ we have the following commutators.
\begin{align}&\begin{aligned} 
\big[\varphi,\mathfrak{B}^{(2)}(R;\eta,\xi)\big]&=
 \gamma^l\eta\owedge R(\{\varphi,\xi\},e_l)+\gamma^l\xi\owedge R(\{\varphi,\eta\},e_l)\\ 
&\quad  -\gamma^k\eta\wedge\gamma^l\xi\otimes R_{kl}\varphi \label{thirdorderandodd1} 
\end{aligned}\\
&\begin{aligned} 
\big[\imath(\varphi),\mathfrak{B}^{(2)}(R;\eta,\xi)\big] &= 
\mathfrak{B}^{(3)}(\nabla R;\varphi,\eta,\xi) 
-R^m{}_{jkl}\gamma^j\varphi\wedge\gamma^k\eta\wedge\gamma^l\xi\otimes\nabla_m\\
&\quad + \big(\gamma^j\varphi\wedge\gamma^k\nabla_j\eta\wedge\gamma^l\xi
+\gamma^j\varphi\wedge\gamma^k\eta\wedge\gamma^l\nabla_j\xi\big)\owedge R_{kl}\\
&\quad -\gamma^mR_{kl}\varphi\wedge\gamma^k\eta\wedge\gamma^l\xi\otimes\nabla_m\label{thirdorderandodd2}
\end{aligned} \end{align}
\end{lemma}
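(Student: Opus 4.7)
The strategy for both commutators is to apply the graded Leibniz rule $[X,fY]=X(f)\,Y+(-)^{|X||f|}f\,[X,Y]$ to the factorisation $\mathfrak{B}^{(2)}(R;\eta,\xi)=(\gamma^k\eta\wedge\gamma^l\xi)\owedge R_{kl}$, where the coefficient $\gamma^k\eta\wedge\gamma^l\xi\in\Gamma\Lambda^2S$ is an even function in the structure sheaf and $R_{kl}=\tfrac14R_{kl,ab}\gamma^{ab}\in{\rm End}(\Gamma S)$ is a (higher-order) even field. Each bracket then decomposes into an action of the spinor/vector on the coefficient and a commutator with the endomorphism $R_{kl}$.

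For \eqref{thirdorderandodd1}, the first piece $\varphi\rfloor(\gamma^k\eta\wedge\gamma^l\xi)\owedge R_{kl}$ is expanded by rule (3) of the interior product as $C(\varphi,\gamma^k\eta)\gamma^l\xi\owedge R_{kl}-C(\varphi,\gamma^l\xi)\gamma^k\eta\owedge R_{kl}$; identifying the contraction $C(\varphi,\gamma^m\cdot)\,e_m$ with a multiple of the supersymmetry bracket $\{\varphi,\cdot\}$ (as in the derivation of the factor $\tfrac12\{\psi,\phi\}$ in the proof of Lemma~\ref{9}) rewrites this as $\gamma^l\xi\owedge R(\{\varphi,\eta\},e_l)+\gamma^l\eta\owedge R(\{\varphi,\xi\},e_l)$. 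The second piece uses Proposition~\ref{unique}: since $R_{kl}$ is skew with respect to $C$, one has $[\varphi,R_{kl}]=-R_{kl}\varphi$, so the Leibniz contribution is precisely $-\gamma^k\eta\wedge\gamma^l\xi\otimes R_{kl}\varphi$.

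For \eqref{thirdorderandodd2} I follow the same scheme. By \eqref{iso} the action piece reads $\imath(\varphi)(\gamma^k\eta\wedge\gamma^l\xi)\owedge R_{kl}=\gamma^j\varphi\wedge\nabla_j(\gamma^k\eta\wedge\gamma^l\xi)\owedge R_{kl}$; working at a point in a parallel orthonormal frame so that $\nabla\gamma^k=0$, the Leibniz rule for $\nabla$ distributes cleanly and produces the two summands $\bigl(\gamma^j\varphi\wedge\gamma^k\nabla_j\eta\wedge\gamma^l\xi+\gamma^j\varphi\wedge\gamma^k\eta\wedge\gamma^l\nabla_j\xi\bigr)\owedge R_{kl}$. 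For $[\imath(\varphi),R_{kl}]=[\gamma^j\varphi\cdot e_j,R_{kl}]$ I apply Leibniz once more: Proposition~\ref{unique} gives $[e_j,R_{kl}]=\nabla_jR_{kl}$, and the endomorphism acts on the spinor function by ordinary application, $R_{kl}(\gamma^j\varphi)=R_{kl}\gamma^j\varphi$. This produces a covariant-derivative part which, after relabelling, equals $\gamma^j\varphi\wedge\gamma^k\eta\wedge\gamma^l\xi\otimes\nabla_jR_{kl}=\mathfrak{B}^{(3)}(\nabla R;\varphi,\eta,\xi)$, together with a residual $-\gamma^k\eta\wedge\gamma^l\xi\wedge R_{kl}\gamma^j\varphi\otimes\nabla_j$.

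The main obstacle, and the only non-mechanical step, is the splitting of this residual via the Clifford commutation identity $R_{kl}\gamma^j=\gamma^jR_{kl}+[R_{kl},\gamma^j]$. Since $R_{kl}$ lies in the image of $\mathfrak{so}\hookrightarrow\cl$, the commutator $[R_{kl},\gamma^j]$ is the Clifford generator associated to the vector $R_{kl}(e^j)$, giving in components a single contraction of the Riemann tensor with a $\gamma$. After moving the resulting odd spinor past the even $2$-form $\gamma^k\eta\wedge\gamma^l\xi$ (the Koszul sign being trivial), the two pieces produce exactly the missing summands $-\gamma^mR_{kl}\varphi\wedge\gamma^k\eta\wedge\gamma^l\xi\otimes\nabla_m$ and $-R^m{}_{jkl}\gamma^j\varphi\wedge\gamma^k\eta\wedge\gamma^l\xi\otimes\nabla_m$ of the claim. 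Care with index placement and with the skew-symmetry of $R_{kl,ab}$ is required to get the signs right, but beyond this the entire computation is a sequence of applications of the Leibniz rule and Proposition~\ref{unique}.
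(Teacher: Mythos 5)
Your proof is correct and follows essentially the same route as the paper's: a graded Leibniz expansion of $\big[\,\cdot\,,\gamma^k\eta\wedge\gamma^l\xi\owedge R_{kl}\big]$, using the interior-product rule together with the $C$-skewness of $R_{kl}$ for \eqref{thirdorderandodd1}, and the splitting $R_{kl}\gamma^m=\gamma^mR_{kl}+R^m{}_{nkl}\gamma^n$ (i.e.\ $[A,v]=Av$ for $A\in\mathfrak{so}$ inside the Clifford algebra) for \eqref{thirdorderandodd2}. The only cosmetic difference is that you dispose of the $\nabla\gamma^k$ terms by evaluating in a geodesic frame, where the paper appeals to the parallelism of the projection $\pi_1$.
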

\begin{proof}
For $\varphi,\eta,\xi\in\Gamma S$ we have
\begin{align*}
\big[\varphi, \mathfrak{B}^{(2)}(R;\eta,\xi)\big] &=
\left[\varphi,\gamma^k\eta\wedge\gamma^l\xi\owedge R_{kl}\right]\\
&= \{\varphi,\eta\}^k\gamma^l\xi\owedge R_{kl}
-\{\varphi,\xi\}^l\gamma^k\eta\owedge R_{kl}
-\gamma^k\eta\wedge\gamma^l\xi\otimes R_{kl}\varphi\\
&=\ \gamma^m\xi\owedge R(\{\varphi,\eta\},e_m)+\gamma^m\eta\owedge R(\{\varphi,\xi\},e_m)
-\gamma^k\eta\wedge\gamma^l\xi\otimes R_{kl}\varphi
\end{align*}
and 
\begin{align*}
\big[\imath(\varphi),\mathfrak{B}^{(2)}(R;\eta,\xi)\big]& =
\left[\gamma^m\varphi\otimes\nabla_m,\gamma^k\eta\wedge\gamma^l\xi\owedge R_{kl}\right]\\
&=\gamma^m\varphi\wedge\gamma^k\eta\wedge\gamma^l\xi \owedge (\nabla_mR)_{kl}
+\gamma^m\varphi\wedge\gamma^k\nabla_m\eta\wedge\gamma^l\xi \owedge R_{kl}\\
&\quad+\gamma^m\varphi\wedge\gamma^k\eta\wedge\gamma^l\nabla_m\xi \owedge R_{kl}
-\gamma^k\eta\wedge\gamma^l\xi\wedge R_{kl}(\gamma^m\varphi)\otimes\nabla_m\displaybreak[0]\\
&=\gamma^m\varphi\wedge\gamma^k\eta\wedge\gamma^l\xi \owedge (\nabla_mR)_{kl}
+\gamma^m\varphi\wedge\gamma^k\nabla_m\eta\wedge\gamma^l\xi \owedge R_{kl}\\
&\quad+\gamma^m\varphi\wedge\gamma^k\eta\wedge\gamma^l\nabla_m\xi \owedge R_{kl}
 -\gamma^k\eta\wedge\gamma^l\xi\wedge \gamma^m R_{kl}\varphi\otimes\nabla_m\\
&\quad-\gamma^k\eta\wedge\gamma^l\xi\wedge R^m{}_{nkl}\gamma^n\varphi\otimes\nabla_m
\end{align*}
which is the result if we recall Definition \ref{E-endomorphism}.
\end{proof}

\begin{proof}\,[Proposition \ref{thirdordercomm}]\ 
We emphasize the following identity which is a consequence of \eqref{CurvatureKilling} and valid for a Killing vector field $X$ and vector fields $Y,Z$.
\begin{equation}
\big[\nabla X, R(Y,Z)\big]= (\nabla_XR)(Y,Z)+R(\nabla_YX,Z)+R(Y,\nabla_ZX).
\end{equation}
The covariance of the expression $\mathfrak{B}^{(2)}(R;\varphi,\psi)$ yields
\begin{align*}
\big[\nabla_Z,\mathfrak{B}^{(2)}(R;\varphi,\psi)\big]
&=\mathfrak{B}^{(2)}(\nabla_ZR;\varphi,\psi)+ \mathfrak{B}^{(2)}(R;\nabla_Z\varphi,\psi) 
	+\mathfrak{B}^{(2)}(R;\varphi,\nabla_Z\psi)
\end{align*}
These formulas yield
\begin{align*}
\big[\e(X)\,,\mathfrak{B}^{(2)}(R;\varphi,\psi)\big] 
&= \big[\nabla_X,\mathfrak{B}^{(2)}(R;\varphi,\psi)\big]
	-(\nabla X)\gamma^k\varphi\wedge\gamma^l\psi\owedge R_{kl} \\
&\quad 	-\gamma^k\varphi\wedge(\nabla X)\gamma^l\psi\owedge R_{kl} 
-\gamma^k\varphi\wedge\gamma^l\psi\owedge[\nabla X,R_{kl} ]\\
&=  \mathfrak{B}^{(2)}(\nabla_XR;\varphi,\psi)
	+\mathfrak{B}^{(2)}(R;\nabla_X \varphi,\psi)
	+\mathfrak{B}^{(2)}(R;\varphi,\nabla_X\psi) \\
&\quad -[\nabla X,\gamma^k]\varphi\wedge\gamma^l\psi\owedge R_{kl}
	-\gamma^k(\nabla X)\varphi\wedge\gamma^l\psi\owedge R_{kl} \\
&\quad 	-\gamma^k\varphi\wedge[\nabla X,\gamma^l]\psi\owedge R_{kl}
	-\gamma^k\varphi\wedge\gamma^l(\nabla X)\psi\owedge R_{kl} \\
&\quad	-\gamma^k\varphi\wedge\gamma^l\psi\owedge((\nabla_XR)_{kl}+R(\nabla_kX,e_l)+R(e_k,\nabla_lX))\\
&= \mathfrak{B}^{(2)}(R;\e(X)\varphi,\psi) +\mathfrak{B}^{(2)}(R;\varphi,\e(X)\psi)
\end{align*}
because 
$[\nabla X,\gamma^l]\psi\owedge R_{kl} 
 =(\nabla_lX)_m\gamma^m\psi\owedge R_{kl}
= -\gamma^m\psi\owedge R(e_k,\nabla_mX)$.

If we restrict ourself to parallel spinors $\varphi,\xi$ and $\eta$  the identities \eqref{thirdorderandodd1} and \eqref{thirdorderandodd2} yield \eqref{thirdorderandodd}, 
if we  recall \eqref{CurvatureKilling} and notice $R_{kl}\varphi=0$ which makes \eqref{thirdorderandodd1} vanish.
\end{proof}

As a consequence of the Bianchi identities $\nabla_{[j}R_{kl]}=R^m{}_{[jkl]}=0$ we get the following corollary.
\begin{corollary}\label{easy}
For a  parallel spinor $\varphi$ the commutator $\left[\o(\varphi),\mathfrak{B}^{(2)}(R;\varphi,\varphi)\right]$ vanishes.
\end{corollary}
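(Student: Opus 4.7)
The plan is to specialize Proposition \ref{thirdordercomm} to the case $\eta=\xi=\varphi$ and show that both terms on the right-hand side vanish separately, by the two Bianchi identities respectively, as suggested by the statement of the corollary.

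First I would set $\eta=\xi=\varphi$ in \eqref{thirdorderandodd}, obtaining
\[
\big[\o(\varphi),\mathfrak{B}^{(2)}(R;\varphi,\varphi)\big]
= \mathfrak{B}^{(3)}(\nabla R;\varphi,\varphi,\varphi)
-R^m{}_{jkl}\,\gamma^j\varphi\wedge\gamma^k\varphi\wedge\gamma^l\varphi\otimes\nabla_m.
\]
The key observation is that the factors $\gamma^j\varphi$ are sections of $S\subset\Gamma\Lambda S$, hence of degree one in the exterior algebra, so $\gamma^j\varphi\wedge\gamma^k\varphi\wedge\gamma^l\varphi$ is totally antisymmetric in the free indices $j,k,l$. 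Consequently, only the totally skew parts of the contracted tensors survive.

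For the second term, this means only $R^m{}_{[jkl]}$ contributes, which vanishes by the first Bianchi identity $R^m{}_{[jkl]}=0$. For the first term, unraveling Definition \ref{E-endomorphism} gives
\[
\mathfrak{B}^{(3)}(\nabla R;\varphi,\varphi,\varphi)
=\gamma^i\varphi\wedge\gamma^j\varphi\wedge\gamma^k\varphi\owedge(\nabla_i R)_{jk},
\]
so by the same antisymmetry argument only $(\nabla_{[i}R)_{jk]}$ enters. Since the Levi-Civita connection is torsion-free, the second Bianchi identity (as recalled in the proof of Proposition \ref{basiccomm}) reduces to $\nabla_{[i}R_{jk]}=0$, and the term vanishes.

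There is no real obstacle here beyond bookkeeping of the antisymmetry; the main point worth double-checking is that the wedge in $\Gamma\Lambda S$ treats each $\gamma^j\varphi$ as an odd element, so permutations of the three free tangent indices produce precisely the alternating sign needed to invoke the Bianchi identities. Once that is in hand, the vanishing of both terms is immediate.
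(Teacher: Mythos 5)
Your proposal is correct and is exactly the paper's argument: the paper derives the corollary in one line from Proposition \ref{thirdordercomm} by invoking the Bianchi identities $\nabla_{[j}R_{kl]}=R^m{}_{[jkl]}=0$, which is precisely what you do, only with the antisymmetry of the triple wedge of odd elements spelled out explicitly. No gaps.
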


Next we turn to the commutators between two third order terms. An easy calculation which makes use of $R(X,Y)\phi=0$ for a parallel spinor yields
\begin{proposition}
For parallel spinors $ \varphi,\psi,\xi,\eta$ we have
\begin{equation}
\left[\mathfrak{B}^{(2)}(R;\varphi,\psi),\mathfrak{B}^{(2)}(R;\xi,\eta)\right]
= \mathfrak{B}^{(4)}(\widetilde R; \varphi,\psi,\xi,\eta) 
\label{26}
\end{equation}
with
\begin{equation}
\begin{aligned}
\widetilde{R}(X,Y,U,V)=\ &\left[R(X,Y),R(U,V)\right]-R(R(X,Y)U,V)-R(U,R(X,Y)V)\\
 &+ R(R(U,V)X,Y)+R(X,R(U,V)Y) \label{27}
\end{aligned}
\end{equation}
or $ \widetilde{R}_{klmn}
  =\big[R_{kl},R_{mn}\big]-2R_{kl}{}^p{}_{[m}R_{n]p}+2R_{mn}{}^p{}_{[k}R_{l]p}$.
\end{proposition}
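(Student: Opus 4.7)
The plan is to evaluate the commutator directly on an arbitrary function $g\in\Gamma\Lambda S$. Writing $A=\mathfrak{B}^{(2)}(R;\varphi,\psi)$ and $B=\mathfrak{B}^{(2)}(R;\xi,\eta)$, Definition~\ref{E-endomorphism} gives
\[
A(B(g))\ =\ \gamma^a\varphi\wedge\gamma^b\psi\wedge R_{ab}\bigl(\gamma^c\xi\wedge\gamma^d\eta\wedge R_{cd}(g)\bigr),
\]
and likewise for $B(A(g))$. The whole computation will land in the span of $\gamma^a\varphi\wedge\gamma^b\psi\wedge\gamma^c\xi\wedge\gamma^d\eta$ with an endomorphism coefficient acting on $g$, so the task is simply to identify this coefficient as $\widetilde R_{abcd}$.

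My first step is to expand the inner $R_{ab}\bigl(\cdots\bigr)$ using that $R_{ab}$ acts on $\Gamma\Lambda S$ as an even derivation (cf.\ Proposition~\ref{unique}). This yields three summands, according to whether $R_{ab}$ hits $\gamma^c\xi$, $\gamma^d\eta$, or $R_{cd}(g)$. For the first two I invoke the parallel spinor hypothesis: since $R_{ab}\xi=R_{ab}\eta=0$, the matrix action reduces to $R_{ab}(\gamma^c\xi)=[R_{ab},\gamma^c]\xi=R_{ab}{}^c{}_e\gamma^e\xi$, i.e.\ the spinor curvature acts purely as an $\mathfrak{so}$-rotation on the frame index. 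After relabelling summation indices the first summand becomes $\gamma^a\varphi\wedge\gamma^b\psi\wedge\gamma^c\xi\wedge\gamma^d\eta\wedge R(R_{ab}(e_c),e_d)(g)$, by virtue of $R_{ab}{}^m{}_c R_{md}=R(R_{ab}(e_c),e_d)$; the second similarly yields coefficient $R(e_c,R_{ab}(e_d))$; the third keeps the canonical wedge with coefficient $R_{ab}\circ R_{cd}$ acting on $g$.

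The expansion of $B(A(g))$ produces the three analogous summands with the pairs $(a,b)\leftrightarrow(c,d)$ swapped, and the required reordering of wedge factors costs no sign because two pairs of odd factors are permuted (total sign $(-1)^{2\cdot 2}=+1$). Subtracting, the two ``curvature-on-curvature'' summands combine into $[R_{ab},R_{cd}](g)$, while the four ``curvature-on-spinor'' summands assemble into
\[
-R(R_{ab}(e_c),e_d)-R(e_c,R_{ab}(e_d))+R(R_{cd}(e_a),e_b)+R(e_a,R_{cd}(e_b))
\]
applied to $g$. These five contributions together are exactly the endomorphism $\widetilde R(e_a,e_b,e_c,e_d)$ from \eqref{27}, proving \eqref{26}. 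The coordinate expression $\widetilde R_{klmn}=[R_{kl},R_{mn}]-2R_{kl}{}^p{}_{[m}R_{n]p}+2R_{mn}{}^p{}_{[k}R_{l]p}$ then follows directly from the antisymmetry of the Riemann tensor in its last pair of indices.

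The main obstacle is careful sign and index bookkeeping: one needs the correct sign convention for the $\mathfrak{so}$-action in the Clifford algebra and has to track the reordering of wedge factors between the two compositions. What makes the calculation terminate so cleanly is the parallel spinor hypothesis, which kills every term in which $R$ would act directly on a spinor argument; the elaborate remainder terms involving $\nabla\varphi$, $\nabla\psi$, $\nabla\xi$, $\nabla\eta$ that appear in Lemma~\ref{9} and Proposition~\ref{thirdordercomm} all vanish at the outset, and no Bianchi identity is invoked.
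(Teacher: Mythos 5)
Your proof is correct and is precisely the calculation the paper has in mind: the proposition is justified there only by the remark that ``an easy calculation which makes use of $R(X,Y)\phi=0$ for a parallel spinor'' yields the result, and your expansion of $A(B(g))-B(A(g))$ via the Leibniz rule, with the parallel-spinor hypothesis killing all terms where $R$ acts directly on a spinor argument, is exactly that calculation. One bookkeeping caveat: with the conventions as you write them, $[R_{ab},\gamma^c]=R_{ab}{}^c{}_e\gamma^e$ together with $R_{ab}{}^m{}_cR_{md}=R(R_{ab}(e_c),e_d)$ would give the first summand of $A(B(g))$ the coefficient $+R(R_{ab}(e_c),e_d)$, contradicting the (correct) signs in your final assembly; the resolution is that the $\mathfrak{so}$-rotation on the \emph{raised} index comes with a minus sign, $[R_{ab},\gamma^c]=-R^c{}_{e\,ab}\,\gamma^e$, and it is this sign that makes the five contributions line up with $\widetilde R$ as defined in \eqref{27}.
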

\begin{remark}
The symmetries of $\widetilde R$ do not force expression \eqref{26} to vanish - neither in general nor in the case of two or three equal entries. The case of four equal entries is obvious not only by representation theoretic considerations but it is clear from the skew symmetry of the bracket.
\end{remark}

\subsection{A first example}

In the above calculations of the commutators we always emphasized the case when the entries are parallel spinors. 
This is due to the fact that in Theorem \ref{oddoddcomm} we have terms which depend on the covariant derivatives of the spinorial entries. 
But going back to the Definition \ref{defiSSKS}  we see that these terms are not allowed to enter into the center $\mathfrak{Z}$. So it is natural to consider the following subsets of $\X(M)$ and $\Gamma S$.
\begin{align} 
\mathcal{K}_0&=\left\{\text{Killing vector fields on } M \right\}&\mathcal{K}_1&=\left\{\text{Parallel spinors on } M\right\}
\end{align}
When we are  going to dicuss supersymmetric Killing structures on (pseudo) Riemannian manifolds of this particular kind  we are  mainly left at the investigation of higher order fields contained in  the center $\mathfrak{Z}$.. Nevertheless we also have to make sure that the  manifolds with which we deal admit these certain kind of vector fields and in particular the special kind of spinors to get non-trivial  algebras.

For example, the set $\mathcal{K}_0$ is empty when we consider a compact simply connected Riemannian manifold with nonempty $\mathcal{K}_1$. This is due to the fact that the presence of a parallel spinor forces the manifold to be Ricci-flat which  can be seen by contracting the necessary condition $R_{kl}\varphi=0$ for the parallel spinor by  $\gamma^k$ and using the Bianchi identity  (see \cite{BFGK}). 
 That $\mathcal{K}_0$ is empty in this case follows from the integral formula (see \cite{KobNom})
$\int_M Ric(X,X)+{\rm tr}(\nabla X)^2-({\rm tr}\nabla X)^2=0$  with $Ric=0$. This formula forces the non trivial vector field $X$ to be parallel which splits one factor $S^1$. This is the contradiction to the simply connectness.
So we see that simply connected Riemannian examples with nontrivial $\mathcal{K}_0$ and $\mathcal{K}_1$  will always be non compact. 

Our definition of the supersymmetric Killing structure and the definition of the Killing fields are strongly motivated by the flat case so that this turns out to be our first example:

We consider the flat space $\RRR^D$ with metric of signature $\sigma$ such that supersymmetry is possible, see Theorems \ref{whencomplex} and  \ref{whenreal}. In flat space the curvature vanishes and so do all  third order terms which arise by calculating the commutator of two odd fields.
The parallel spinors in the flat case are the constant spinors and the Killing vector fields are represented by infinitesimal rotations and infinitesimal translations.
In this first example we recover the classical supersymmetry algebra from \eqref{classicalSUSY}. 

\section{SSKS on manifolds of special holonomy}

\subsection{K\"ahler manifolds}\label{SecSUn}

The manifolds we will investigate in this section are manifolds with Holonomy  $SU(n)$. By this we mean simply connected  irreducible
manifolds of dimension $D$ whose restricted holonomy group is  $SU(n)\subset {GL}_D\RRR$.
We recall that a Riemannian manifold which admits a parallel spinor is Ricci flat. 
Furthermore its holonomy group is one of the list in the next proposition which is due to \cite{Hitchin} and \cite{WangMK1}.
\begin{proposition}
Let $M$ be a complete, simply connected, irreducible, non flat Riemannian manifold.
If $M$ admits a parallel spinor, then its dimension $D$, its holonomy $G$, and the number $N$ of linearly
independent complex spinors is given by either
$(D=2n,G=SU(n),N=2)$,
$(D=4m,G=Sp(m),N=m+1)$,
$(D=8,Spin(7),N=1)$ or
$(D=7,G=G_2,N=1)$.
\end{proposition}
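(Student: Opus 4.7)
The plan is to proceed in four steps, combining the integrability condition for a parallel spinor with Berger's classification of Riemannian holonomy.

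\textbf{Step 1 (Ricci-flatness).} I would first exploit the integrability condition $R^S(X,Y)\varphi=0$ that a parallel spinor $\varphi$ must satisfy. Writing this out in an orthonormal frame as $\tfrac{1}{4}R_{ijkl}\gamma^{kl}\varphi = 0$ and contracting with $\gamma^j$, the symmetries of the Riemann tensor together with the first Bianchi identity collapse the Clifford expression and give $\mathrm{Ric}(e_i)\cdot\varphi=0$. Since Clifford multiplication by a nonzero vector is injective on spinors, this forces $\mathrm{Ric}\equiv 0$. (This is the same argument already invoked after Corollary~\ref{easy}.)

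\textbf{Step 2 (reduction via Berger's list).} Under the standing hypotheses (complete, simply connected, irreducible, non-flat), Berger's classification provides a finite list of possible Riemannian holonomy groups: $SO(D)$, $U(n)$ (with $D=2n$), $SU(n)$, $Sp(n)$, $Sp(n)\cdot Sp(1)$ (with $D=4m$), $G_2$ (with $D=7$), and $Spin(7)$ (with $D=8$), plus the symmetric cases which are excluded because irreducible non-symmetric is implicit here (or eliminated by Ricci-flatness of symmetric spaces forcing flatness). Ricci-flatness from Step~1 eliminates generic $SO(D)$ (no reduction), Kähler non-Calabi--Yau $U(n)$ (whose Ricci form is the curvature of the canonical bundle and nontrivial once the first Chern class is), and $Sp(n)\cdot Sp(1)$ (quaternion-Kähler with nonzero scalar curvature), leaving exactly $SU(n)$, $Sp(m)$, $G_2$, and $Spin(7)$.

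\textbf{Step 3 (counting parallel spinors).} For each remaining holonomy $G$, the space of parallel spinors is, by the holonomy principle, isomorphic to the subspace $(S_{\mathbb C})^{G}$ of $G$-invariants in the complex spin representation. I would decompose the spin representation of $\mathrm{Spin}(D)$ restricted to $G$:
\begin{itemize}[leftmargin=2em]
\item For $G=SU(n)\subset\mathrm{Spin}(2n)$, the half-spin representations decompose as $\Lambda^{\mathrm{even/odd}}\mathbb{C}^n$; the only $SU(n)$-invariants are $\Lambda^0$ and $\Lambda^n$, giving $N=2$.
\item For $G=Sp(m)\subset\mathrm{Spin}(4m)$, the spin representation decomposes as $\bigoplus_{k=0}^{m}\bigl(\Lambda^{k}\mathbb{C}^{2m}/\Lambda^{k-2}\mathbb{C}^{2m}\bigr)$ under $Sp(m)$, each primitive piece containing a unique invariant line, which yields $N=m+1$.
\item For $G=G_2\subset\mathrm{Spin}(7)$ the 8-dimensional real spin representation decomposes as $\mathbb{R}\oplus\mathbb{R}^{7}$, so $N=1$.
\item For $G=Spin(7)\subset\mathrm{Spin}(8)$ one half-spin representation splits as $\mathbb{R}\oplus\mathbb{R}^{7}$ while the other is irreducible, giving $N=1$.
\end{itemize}

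\textbf{Main obstacle.} Steps 1, 2, and 4 are essentially formal once Berger's list is available; the genuine content sits in Step~3, the representation-theoretic decomposition of the spin modules under each holonomy group. The trickiest of these is $Sp(m)\subset\mathrm{Spin}(4m)$, where one has to argue carefully that each primitive piece of the $Sp(m)$-decomposition of $\Lambda^{\ast}\mathbb{C}^{2m}$ contributes exactly one invariant and that these lift to the spin representation. I would handle this via the Lefschetz-type $\mathfrak{sp}(m)$-triple on $\Lambda^{\ast}\mathbb{C}^{2m}$ coming from the symplectic form, as in Wang's original argument, rather than trying to redo the branching by brute force.
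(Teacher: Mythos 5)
The paper does not actually prove this proposition: it is quoted as a known classification result and attributed to Hitchin and Wang, so there is no internal proof to compare against. Your proposal is a correct reconstruction of the standard argument (essentially Wang's): Ricci-flatness from the integrability condition $R_{kl}\varphi=0$ contracted with $\gamma^k$ via the first Bianchi identity (the paper itself sketches exactly this contraction later, when discussing compact examples), reduction through Berger's list, and then the holonomy principle identifying parallel spinors with $G$-invariants in the spin module. The branching data you quote for $SU(n)$, $Sp(m)$, $G_2$ and $Spin(7)$ are all correct, including the triality subtlety that under $Spin(7)\subset Spin(8)$ one half-spin module restricts to $\mathbbm{1}\oplus\mathbbm{7}$ while the other restricts to the irreducible $8$-dimensional spin representation of $Spin(7)$.

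Two small points worth tightening. First, the holonomy principle gives parallel sections as invariants of the \emph{full} holonomy group, not just its identity component; simple connectedness of $M$ is what lets you replace this by the restricted holonomy group and hence work at the Lie algebra level, so that hypothesis should be invoked explicitly at that step rather than only in Step 2. Second, the elimination of the symmetric cases deserves one honest sentence: an irreducible Riemannian symmetric space is Einstein, so Ricci-flatness forces it to be flat, contradicting the non-flatness hypothesis; your parenthetical gestures at this but leaves it slightly ambiguous. Finally, your ``Step 4'' is referenced in the closing paragraph but never appears; presumably it is the assembly of Steps 1--3, which is harmless but should be labelled consistently.
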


From \cite{GaoTian} or \cite{LawMich} the fact, that a manifold has holonomy $SU(n)$ if there exists exactly two parallel spinors, can reformulated as follows:  $M$ has holonomy  $SU(n)$  if there exists a parallel pure spinor and 
a spinor $\eta$ is called pure if the kernel of the map
\begin{equation}
TM \otimes \CCC \ni v\mapsto \rho(v)\eta=0
\end{equation}
has maximal complex dimension  $n$. We choose a local ON-basis $\{e_i\}$ of $TM$ and the associated complex basis $\{e_k:=\frac{1}{2}(e_k-ie_{k+n}),e_{\bar k}:=\frac{1}{2}(e_k+ie_{k+n})\}$ in such a way that the $\gamma$-matrices\footnote{For the
$\gamma$-matrices associated to the hermitian basis we have $\{\gamma^{\bar\imath},\gamma^j\}=-2g^{\bar\imath j}$, $\{\gamma^{\bar\imath},\gamma^{\bar\jmath}\}=\{\gamma^i,\gamma^j\}=0$ and $(\gamma^{\bar\jmath})^\dagger=-\gamma^i$ }
 obey
\begin{equation}
\gamma^{\bar\imath}\eta=0.\label{pure}
\end{equation}
Due to the symmetries of the charge conjugation we have 
$(\gamma^{\bar\imath}\eta)^C\sim \gamma^i\eta^C$ up to a sign so that  \eqref{pure} is equivalent to
\begin{equation}
\gamma^i\eta^C=0.\label{pure2}
\end{equation}
So we  always have pairs of pure spinors related by the charge conjugation, and because the charge conjugation is parallel, either both are parallel or none. We draw attention to the fact that in the case $D=0\mod4$ the two are of the same chirality whereas in the case $D=2\mod4$ they are of opposite chirality.

We recall the standard embedding $\mathfrak{su}(n)\subset\mathfrak{so}(2n)$ by
\begin{equation}
\mathfrak{su}(n)\ni \tilde R_{i\bar \jmath}\simeq \left(\begin{array}{cc}R_1&R_2\\-R_2&R_1\end{array}\right)=R_{ij}\in\mathfrak{so}(2n)\label{standardEmb}
\end{equation}
with $\tilde R=R_1+iR_2$, i.e.\ $R_{i\bar \jmath}=R_{ij}+iR_{i ,j+n}=\bar R_{\bar\imath j}$
\begin{remark}
We recall some identities used for calculations in complex coordinates.

The Bianchi identities $R^m{}_{[ijk]}=0$ and $\nabla_{[i}R_{jk]}=0$ are written as
\begin{equation}
R^m{}_{[ij]\bar n}=R^{\bar m}{}_{[\bar\imath\bar\jmath]n}=0\ \text{ and }\
\nabla_{[i}R_{j]\bar m}=\nabla_{[\bar\imath}R_{\bar\jmath]m}=0.
\end{equation}
Useful $\gamma$-matrix identities are
\begin{equation}\begin{aligned}
\big[\gamma^{i\bar\jmath},\gamma^n\big]&=-2g^{\bar\jmath n}\gamma^i\,,\qquad
\big[\gamma^{i\bar\jmath},\gamma^{\bar n}\big] =2g^{i\bar n}\gamma^{\bar\jmath}\,,\\
\big[\gamma^{o\bar p},\gamma^{i\bar\jmath}\big]&=2g^{\bar\jmath o}\gamma^{i\bar p}
-2g^{i\bar p}\gamma^{o\bar\jmath}.
\end{aligned}\end{equation}
\end{remark}
\begin{lemma}\label{purezero}
\begin{enumerate}[leftmargin=2.5em]
\item
For all (complex) spinors $\eta,\xi$ we have
\begin{equation}
\mathfrak{B}^{(2)}(R;\eta,\xi) 
	=2\big(\gamma^i\eta\wedge\gamma^{\bar \jmath}\xi\owedge R_{i\bar \jmath}
	+\gamma^{\bar \imath}\eta\wedge\gamma^{j}\xi \owedge R_{\bar\imath j }\big).
\end{equation}
\item A pure spinor $\eta$ satisfies
$ \mathfrak{B}^{(2)}(R;\eta,\eta)=0 $.
\end{enumerate}
\end{lemma}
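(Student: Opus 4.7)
The plan is to expand the real-coordinate sum defining $\mathfrak{B}^{(2)}(R;\eta,\xi)$ in the hermitian basis $\{\epsilon_i=e_i^{(1,0)},\epsilon_{\bar\imath}=e_i^{(0,1)}\}_{i=1,\dots,n}$ and to exploit the $SU(n)$-holonomy condition, which forces the curvature 2-form to be of pure type $(1,1)$. First I would observe that the real basis decomposes as $e_k=\epsilon_k+\epsilon_{\bar k}$ and $e_{k+n}=i(\epsilon_k-\epsilon_{\bar k})$, inducing the analogous Clifford decomposition $\gamma_k=\gamma^k+\gamma^{\bar k}$, $\gamma_{k+n}=i(\gamma^k-\gamma^{\bar k})$. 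Combining these and extending tensors complex-bilinearly on their tangent slots, one obtains the conversion rule
\[
\sum_{k=1}^{2n}\gamma_k\otimes T(e_k)\;=\;2\sum_{i=1}^n\bigl(\gamma^i\otimes T(\epsilon_{\bar\imath})+\gamma^{\bar\imath}\otimes T(\epsilon_i)\bigr)
\]
for any complex-linear $T$, i.e.\ each real index contributes a factor $2$ when re-expressed in the holomorphic/antiholomorphic split.

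Applying this conversion to both tangent slots of $\gamma^k\eta\wedge\gamma^l\xi\owedge R_{kl}$ produces four types of terms, sorted by the chirality of the two arguments of $R$. The $SU(n)$-holonomy hypothesis means that $R$ takes values in $\mathfrak{u}(n)\subset\mathfrak{so}(2n)$, so its 2-form is of pure type $(1,1)$: $R(\epsilon_i,\epsilon_j)=0$ and $R(\epsilon_{\bar\imath},\epsilon_{\bar\jmath})=0$. Only the two mixed contributions $\gamma^i\eta\wedge\gamma^{\bar\jmath}\xi\owedge R(\cdot,\cdot)$ and $\gamma^{\bar\imath}\eta\wedge\gamma^j\xi\owedge R(\cdot,\cdot)$ survive. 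Matching these against the paper's convention $R_{i\bar\jmath}=R_{ij}+iR_{i,j+n}$ from \eqref{standardEmb} (which, on extending $R$ complex-bilinearly in its second tangent slot, differs from $R(\epsilon_i,\epsilon_{\bar\jmath})$ by the factor coming from $g(\epsilon_i,\epsilon_{\bar\jmath})=\tfrac12\delta_{ij}$) compensates two of the four factors of $2$ generated above and leaves the claimed overall prefactor $2$, giving (1).

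Part (2) is an immediate consequence. Substituting $\xi=\eta$ into the formula of part (1) yields
\[
\mathfrak{B}^{(2)}(R;\eta,\eta)\;=\;2\bigl(\gamma^i\eta\wedge\gamma^{\bar\jmath}\eta\owedge R_{i\bar\jmath}+\gamma^{\bar\imath}\eta\wedge\gamma^j\eta\owedge R_{\bar\imath j}\bigr),
\]
and the pure-spinor condition $\gamma^{\bar\imath}\eta=0$ from \eqref{pure} annihilates the factor $\gamma^{\bar\jmath}\eta$ in the first summand and the factor $\gamma^{\bar\imath}\eta$ in the second, so both terms vanish.

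The main technical nuisance is purely bookkeeping: keeping straight how the paper's symbol $R_{i\bar\jmath}$ (the $\mathfrak{u}(n)$-matrix-entry convention of \eqref{standardEmb}) relates to the complex-bilinear evaluation $R(\epsilon_i,\epsilon_{\bar\jmath})$ and correctly tracking the factors of $\tfrac12$ from $g(\epsilon_i,\epsilon_{\bar\jmath})=\tfrac12\delta_{ij}$ against the factors of $2$ coming from the basis conversion. Once this dictionary is pinned down, part (1) is a direct expansion of the definition of $\mathfrak{B}^{(2)}$ and part (2) follows with one line.
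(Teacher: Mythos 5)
Your argument is correct and is essentially the paper's proof: the paper carries out the same change of basis between the real orthonormal frame and the hermitian frame (it expands the two mixed complex-index terms back into real indices and checks that their sum is $\tfrac12\,\mathfrak{B}^{(2)}(R;\eta,\xi)$, using the $\mathfrak{u}(n)$ block structure of \eqref{standardEmb} exactly where you invoke the $(1,1)$-type of the K\"ahler curvature to kill the pure-type contributions), and part (2) is likewise read off from $\gamma^{\bar\imath}\eta=0$. The only difference is the direction in which the basis conversion is run, which is purely organizational.
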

\begin{proof}
With the definition of the complex $\gamma$-matrices we have
\begin{align*}
\begin{aligned}
\gamma^i\eta\wedge\gamma^{\bar \jmath}\xi\owedge R_{i\bar \jmath}  
&= \frac{1}{4}(\gamma^i+i\gamma^{i+n})\eta\wedge(\gamma^j-i\gamma^{j+n})\xi\owedge(R_{ij}+iR_{i,j+n})\\
&= \frac{1}{4}\gamma^i\eta\wedge\gamma^j\xi\owedge R_{ij}
	+\frac{i}{4}\big(\gamma^i\eta\wedge\gamma^j\xi\owedge R_{i,j+n}
	+\gamma^{i+n}\eta\wedge\gamma^{j+n}\xi\owedge R_{i,j+n}\\
&\quad  +\gamma^{i+n}\eta\wedge\gamma^j\xi\owedge R_{ij}
	-\gamma^i\eta\wedge\gamma^{j+n}\xi\owedge R_{ij}\big)
\end{aligned}
\end{align*}
and 
\begin{align*}
\gamma^{\bar \imath}\eta\wedge\gamma^{j}\xi\owedge R_{\bar\imath j }
&= 	\frac{1}{4}\gamma^i\eta\wedge\gamma^j\xi\owedge R_{ij} 
 -\frac{i}{4}\Big(\gamma^i\eta\wedge\gamma^j\xi\owedge R_{i,j+n} \\
&\quad	+\gamma^{i+n}\eta\wedge\gamma^{j+n}\xi\owedge R_{i,j+n} 
	+\gamma^{i+n}\eta\wedge\gamma^j\xi\owedge R_{ij}
	-\gamma^i\eta\wedge\gamma^{j+n}\xi\owedge R_{ij}\Big)
\end{align*}
which proves the first part. The second part  is a  consequence of  \eqref{pure} and \eqref{pure2}.
\end{proof}


\subsubsection{Real SSKS in the $SU(n)$ case}

We turn to the investigation of real supersymmetric Killing structures and distinguish three cases

1.\ {\it The real case:} $D\equiv 0 \mod8$

We are in the case where the charge conjugation is chirality preserving. All $\gamma$-matrices are chosen to be  symmetric (resp.\  skew-symmetric). So we have
\[
\gamma^{\bar\imath}\eta=\gamma^i\eta^C=0
\]
 for the pure spinors $\eta$ and $\eta^C=C\eta^*$. 
Let $\eta_1,\eta_2$ be the two parallel real spinors on $M$ of the same chirality given by
\begin{equation}
\eta_1=\frac{1}{2}(\eta+\eta^C)\quad\text{and }\ \eta_2=\frac{1}{2i}(\eta-\eta^C).
\end{equation}
In fact, the pure spinor can not be real,  i.e.\ $\eta\neq\eta^C$, otherwise its kernel would be the whole space.
With this notation the third order terms in the commutator of the generators of $\mathcal{K}_1={\rm span}_\RRR\{\eta_1,\eta_2\}$ are given by
\begin{equation}\begin{split}
\mathfrak{B}^{(2)}(R;\eta_1,\eta_1)& = \frac{1}{4}\mathfrak{B}^{(2)}(R;\eta,\eta)
   +\frac{1}{4}\mathfrak{B}^{(2)}(R;\eta^C,\eta^C)
   +\frac{1}{2}\mathfrak{B}^{(2)}(R;\eta,\eta^C)\\
\mathfrak{B}^{(2)}(R;\eta_2,\eta_2)& = -\frac{1}{4}\mathfrak{B}^{(2)}(R;\eta,\eta)
   -\frac{1}{4}\mathfrak{B}^{(2)}(R;\eta^C,\eta^C)
   +\frac{1}{2}\mathfrak{B}^{(2)}(R;\eta,\eta^C)\\
\mathfrak{B}^{(2)}(R;\eta_1,\eta_2)& = \frac{1}{4i}\mathfrak{B}^{(2)}(R;\eta,\eta)
   -\frac{1}{4i}\mathfrak{B}^{(2)}(R;\eta^C,\eta^C)
\end{split}\end{equation}
which with part (2) of Lemma \ref{purezero} leads to
\begin{equation}\begin{split}
\mathfrak{B}^{(2)}(R;\eta_1,\eta_2) &= 0\\
\mathfrak{B}^{(2)}(R;\eta_1,\eta_1) &= \mathfrak{B}^{(2)}(R;\eta_2,\eta_2) = \frac{1}{2}\mathfrak{B}^{(2)}(R;\eta,\eta^C).
\end{split}\end{equation}

2.\ {\it The complex case:} $D=2\mod 8$

The pure parallel spinor in $S^+$ is denoted by  $\eta_+=(\xi,0)$  and we know that its charge conjugated is given by $\eta_-=\tau(\eta_+)\in S^-$. The definition of the real spinor bundle as $S=\left\{ (\xi,\tau(\xi))\,|\  \xi\in S^+\right\}\subset S^+\oplus S^-$ gives two real parallel spinors
\begin{equation}
\eta_1=\frac{1}{2}(\eta_+ +\eta_-)\quad  \text{and}\  \eta_2=\frac{1}{2i}(\eta_+ -\eta_-),
\end{equation}
because $\tau(i\eta_+)=-i\tau(\eta_+)=-i\eta_-$. The same calculation as above shows that for these spinors we have
\begin{equation}\begin{split}
\mathfrak{B}^{(2)}(R;\eta_1,\eta_2)&= 0\\
\mathfrak{B}^{(2)}(R;\eta_1,\eta_1)&= \mathfrak{B}^{(2)}(R;\eta_2,\eta_2)
= \frac{1}{2}\mathfrak{B}^{(2)}(R;\eta_+,\eta_-).\label{auswertung}
\end{split}\end{equation}

3.\  {\it The quaternionic case:} $D=4\mod8$

The construction in this case differs from the one in the two cases before.  
We recall that the real spinor bundle is given by \[S=\{\Phi=(\varphi^1,\varphi^2) |\tau(\Phi)=\Phi\Leftrightarrow \varphi^I_C=\Omega^{IJ}\varphi^J\}.\]

The pure (complex) spinor is denoted by $\eta$ and its (linear independent) charge conjugated by $\eta^C$  like above. We define a real basis by
\begin{equation}
\Phi_1=\frac{1}{2}{\binom{\eta}{\eta^C}},       \quad
\Phi_2=\frac{1}{2}\binom{\eta^C}{-\eta},    \quad
\Phi_3=\frac{1}{2}\binom{i\eta}{-i\eta^C},    \quad
\Phi_4=\frac{1}{2}\binom{i\eta^C}{i \eta}.\label{realquatbase}
\end{equation}
Furthermore we define the following elements which turn out to be well adapted to our problem, see \eqref{adaptedbasis}:
\begin{equation}\begin{split}
\eta_1 &=\frac{1}{2}(\Phi_1-i\Phi_3)=\frac{1}{2}\binom{\eta}{0},\qquad 
\eta_{\bar 1}=\frac{1}{2}(\Phi_1+i\Phi_3)=\frac{1}{2}\binom{0}{\eta^C},\\
\eta_2&=\frac{1}{2}(\Phi_2-i\Phi_4)=\frac{1}{2}\binom{\eta^C}{0},\qquad
\eta_{\bar 2}=\frac{1}{2}(\Phi_2+i\Phi_4)=\frac{1}{2}\binom{0}{-\eta}.
\end{split}\label{complquatbase}\end{equation}
They do not form a real basis, but obey $\tau(\eta_\alpha)=\eta_{\bar\alpha}$.

\begin{lemma}
The evaluation of $\mathfrak{B}^{(2)}(R;\ldotp,\ldotp)$ on pairs of elements in $\{\eta_1,\eta_{\bar 1},\eta_2,\eta_{\bar 2}\}$ is given by:
\begin{equation}\begin{aligned}
\mathfrak{B}^{(2)}(R;\eta_1,\eta_1)&= \mathfrak{B}^{(2)}(R;\eta_{\bar 1},\eta_{\bar 1})
= \mathfrak{B}^{(2)}(R;\eta_2,\eta_2)
\\
&= \mathfrak{B}^{(2)}(R;\eta_{\bar 2},\eta_{\bar 2})
 =\mathfrak{B}^{(2)}(R;\eta_1,\eta_{\bar 2})
 = \mathfrak{B}^{(2)}(R;\eta_{\bar 1},\eta_2)=0 \\
\mathfrak{B}^{(2)}(R;\eta_1,\eta_{\bar 1})& =
  \frac{1}{2}\binom{\gamma^k\eta}{0}\wedge \binom{0}{\gamma^{\bar l}\eta^C }\owedge R_{k\bar l}
\\
\mathfrak{B}^{(2)}(R;\eta_2,\eta_{\bar 2})&=
  -\frac{1}{2}\binom{0}{\gamma^k\eta } \wedge\binom{\gamma^{\bar l}\eta^C}{0}\owedge R_{k\bar l }
\\
\mathfrak{B}^{(2)}(R;\eta_1,\eta_2)& =
  \frac{1}{2}\binom{\gamma^k\eta}{0}\wedge\binom{ \gamma^{\bar l}\eta^C}{0 }\owedge R_{k\bar l}
\\
\mathfrak{B}^{(2)}(R;\eta_{\bar 1},\eta_{\bar 2})& =
-\frac{1}{2}\binom{0}{\gamma^k\eta }\wedge\binom{0}{\gamma^{\bar l}\eta^C}\owedge R_{k \bar l }
\end{aligned}
\end{equation}
\end{lemma}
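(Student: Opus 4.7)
The plan is to reduce the entire lemma to a direct substitution into the expansion from part (1) of Lemma \ref{purezero}, together with the purity conditions $\gamma^{\bar\imath}\eta=0$ and $\gamma^i\eta^C=0$. Since every $\eta_\alpha$ in \eqref{complquatbase} sits in exactly one of the two $S_\CCC$--summands of $S\subset S_\CCC\oplus S_\CCC$, the Clifford action splits into two independent copies, so it suffices to tabulate on which entry each $\gamma^i\eta_\alpha$ resp.\ $\gamma^{\bar\imath}\eta_\alpha$ survives. Writing everything out:
\begin{equation*}
\gamma^i\eta_1=\tfrac12\tbinom{\gamma^i\eta}{0},\quad
\gamma^{\bar\imath}\eta_1=0,\quad
\gamma^i\eta_{\bar1}=0,\quad
\gamma^{\bar\imath}\eta_{\bar1}=\tfrac12\tbinom{0}{\gamma^{\bar\imath}\eta^C},
\end{equation*}
and analogously $\gamma^{\bar\imath}\eta_2=\tfrac12\tbinom{\gamma^{\bar\imath}\eta^C}{0}$, $\gamma^i\eta_2=0$, $\gamma^i\eta_{\bar2}=\tfrac12\tbinom{0}{-\gamma^i\eta}$, $\gamma^{\bar\imath}\eta_{\bar2}=0$.

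Inserting these data into
\begin{equation*}
\mathfrak{B}^{(2)}(R;\Phi,\Psi)
=2\bigl(\gamma^i\Phi\wedge\gamma^{\bar\jmath}\Psi\owedge R_{i\bar\jmath}
+\gamma^{\bar\imath}\Phi\wedge\gamma^j\Psi\owedge R_{\bar\imath j}\bigr),
\end{equation*}
the six vanishing identities follow at a glance: in each of them both summands contain at least one factor from the table above that is zero. (For instance, for $(\eta_1,\eta_{\bar2})$ the first summand dies because $\gamma^{\bar\jmath}\eta_{\bar2}=0$ and the second because $\gamma^{\bar\imath}\eta_1=0$; the four diagonal cases are the same argument repeated.) This is essentially the same computation as in part (2) of Lemma \ref{purezero}, applied componentwise in the quaternionic structure.

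For the four nonvanishing cases only one of the two summands contributes. In the pair $(\eta_1,\eta_{\bar1})$ the surviving summand is the first one, and plugging in the explicit components immediately produces the stated formula. The pair $(\eta_1,\eta_2)$ works identically. For $(\eta_2,\eta_{\bar2})$ and $(\eta_{\bar1},\eta_{\bar2})$ only the second summand survives; here one obtains expressions in which the nontrivial first component sits in the left factor and the right factor carries the minus sign from $\eta_{\bar2}$. To put them into the form displayed in the lemma one swaps the two wedge factors, which (both being odd) produces a sign, and relabels $(\bar\imath,j)\leftrightarrow(\bar l,k)$ together with $R_{\bar l k}=-R_{k\bar l}$; the two sign changes combine with the explicit minus to give the stated result.

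The calculation is therefore routine; the only point requiring care is the bookkeeping of signs in the last two cases, where one has to keep track simultaneously of the graded skew-symmetry of the wedge on odd elements, the skew-symmetry of $R_{ij}$ in its indices, and the explicit $-1$ built into $\eta_{\bar2}=\tfrac12\binom{0}{-\eta}$. This is what I expect to be the only mild obstacle; everything else is dictated by the vanishing pattern above.
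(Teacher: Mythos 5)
Your proposal is correct and follows essentially the same route as the paper: expand each bracket via the hermitian-index decomposition of $\mathfrak{B}^{(2)}$ from part (1) of Lemma \ref{purezero}, apply the purity conditions $\gamma^{\bar\imath}\eta=\gamma^i\eta^C=0$ componentwise in the two summands of $S_\CCC\oplus S_\CCC$, and track the signs from the graded wedge swap, the antisymmetry of $R$, and the explicit $-1$ in $\eta_{\bar 2}$. Your sign bookkeeping in the four nonvanishing cases checks out against the stated formulas, and you actually cover all ten pairs where the paper only exhibits representative ones.
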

\begin{proof}
We will not prove all of the statements, but for example
\begin{align*}
\mathfrak{B}^{(2)}(R;\eta_1,\eta_1)&=
\binom{\gamma^k\eta}{0}\wedge\overbrace{\binom{\gamma^{\bar l}\eta}{0}}^{=0}\owedge R_{k\bar l}= 0\,,\\
\mathfrak{B}^{(2)}(R;\eta_1,\eta_{\bar 2})&=
\frac{1}{2}\binom{\gamma^k\eta}{0}\wedge\overbrace{\binom{0}{-\gamma^{\bar l}\eta}}^{=0}\owedge R_{k\bar l}
+\frac{1}{2}\overbrace{\binom{\gamma^{\bar k}\eta}{ 0}}^{=0}\wedge\binom{0}{-\gamma^l \eta}\owedge R_{\bar k l} 
=0\,,
\end{align*}
and one  non vanishing bracket
\begin{align*}
\mathfrak{B}^{(2)}(R;\eta_1,\eta_{\bar 1})& =
  \frac{1}{2}\binom{\gamma^k\eta}{ 0}\wedge\binom{0}{\gamma^{\bar l}\eta^C}\owedge R_{k\bar l}
  +\frac{1}{2}\overbrace{\binom{\gamma^{\bar l}\eta}{0}}^{=0}
              \wedge\overbrace{\binom{0}{\gamma^k\eta^C}}^{=0}\owedge R_{\bar l k}\\
 &=   \frac{1}{2}\binom{\gamma^k\eta}{0}\wedge\binom{0}{\gamma^{\bar l}\eta^C}\owedge R_{k\bar l}\;.
\end{align*}
\end{proof}

The previous lemma yields 
\begin{equation}\begin{aligned}
\mathfrak{B}^{(2)}(R;\Phi,\Psi)
&=(a\bar z+\bar az)\mathfrak{B}^{(2)}(R;\eta_1,\eta_{\bar1}) 
		+(\bar bw+b\bar w)\mathfrak{B}^{(2)}(R;\eta_2,\eta_{\bar2}) \\
&\quad  +(aw+bz)\mathfrak{B}^{(2)}(R;\eta_1,\eta_2) +(\bar a\bar w+\bar b\bar z)\mathfrak{B}^{(2)}(R;\eta_{\bar1},\eta_{\bar2})
\end{aligned}\end{equation}
for two real parallel spinors $\Phi=a\eta_1+\bar a\eta_{\bar 1}+b\eta_2+\bar b\eta_{\bar 2}$ and $\Psi =z\eta_1+\bar z\eta_{\bar 1}+w\eta_2+\bar w\eta_{\bar 2}$. 
For the sake of completeness we add the result for the real basis.

\begin{remark}\label{commoftherealbase}
\begin{align}
\mathfrak{B}^{(2)}(R;\Phi_1,\Phi_1)&=\mathfrak{B}^{(2)}(R;\Phi_3,\Phi_3)
  =\mathfrak{B}^{(2)}(R;\eta_1,\eta_{\bar1})\nonumber\\
\mathfrak{B}^{(2)}(R;\Phi_2,\Phi_2)&=\mathfrak{B}^{(2)}(R;\Phi_4,\Phi_4)
  =\mathfrak{B}^{(2)}(R;\eta_2,\eta_{\bar2})\nonumber\\
\mathfrak{B}^{(2)}(R;\Phi_1,\Phi_3)&=\mathfrak{B}^{(2)}(R;\Phi_2,\Phi_4)= 0  \nonumber\\
\mathfrak{B}^{(2)}(R;\Phi_1,\Phi_2)&=-\mathfrak{B}^{(2)}(R;\Phi_3,\Phi_4)
  =\mathfrak{B}^{(2)}(R;\eta_1,\eta_2)+\mathfrak{B}^{(2)}(R;\eta_{\bar1},\eta_{\bar2})
     \nonumber\\
\mathfrak{B}^{(2)}(R;\Phi_1,\Phi_4)&=\mathfrak{B}^{(2)}(R;\Phi_3,\Phi_2)
  =i\big(\mathfrak{B}^{(2)}(R;\eta_1,\eta_2)-\mathfrak{B}^{(2)}(R;\eta_{\bar1},\eta_{\bar2})\big)
     \nonumber
\end{align}
\end{remark}

\begin{definition}\label{defthirdorder}
Let $M$ be a simply connected, Ricci flat, non flat  K\"ahler manifold with real spinor bundle $S$. Its  two (complex) parallel pure spinors are given by $\eta$ and $\eta^C$. 
The third order even  fields which appear in the construction above will be denoted by
\begin{equation}
\mathcal{Z}:=\mathfrak{B}^{(2)}(R;\eta,\eta^C)\label{egal1}
\end{equation}
in the real and complex case, and by
\begin{equation}\begin{split}
\mathcal{Z}_1&:=\mathfrak{B}^{(2)}(R;\eta_1,\eta_{\bar1})\\
\mathcal{Z}_2&:=\mathfrak{B}^{(2)}(R;\eta_2,\eta_{\bar2})\\
\mathcal{Z}_3&:=\mathfrak{B}^{(2)}(R;\eta_1,\eta_2)
                                       +\mathfrak{B}^{(2)}(R;\eta_{\bar1},\eta_{\bar2})\\
\mathcal{Z}_4&:=i (\mathfrak{B}^{(2)}(R;\eta_1,\eta_2)
                                       -\mathfrak{B}^{(2)}(R;\eta_{\bar1},\eta_{\bar2}))
\end{split}\end{equation}
in the quaternionic case. Furthermore in both cases we define $\mathfrak{Z}$ to be the respective linear span, i.e.
\begin{equation}
 \mathfrak{Z}:=\RRR \mathcal{Z}\quad\text{or }\ 
 \mathfrak{Z}:={\rm span}_\RRR\{\mathcal{Z}_1,\ldots\mathcal,{Z}_4\} \label{egal2}
\end{equation}
and in the quaternionic case the two subspaces of $\mathfrak{Z}$ given by
\begin{equation}
 \mathfrak{Z}_1:= \RRR\mathcal{Z}_1,\quad\mathfrak{Z}_2:=\RRR\mathcal{Z}_2
\end{equation}
\end{definition}

\begin{theorem}\label{generalSUfall}
{\em (1)\ }\ Let $M$ be a simply connected, Ricci flat, non flat K\"ahler manifold of real  ($\dim M= 0\mod8$) or complex ($\dim M=2\mod8$) type, $S$ be the real spinor bundle and $\hat M$ be the associated special graded manifold. Denote  its parallel pure (complex) spinors by $\eta$ and $\eta^C$. Furthermore let
\begin{align}
\mathcal{K}_0&=\{\text{Killing vector fields}\}\subset\X(M) \nonumber\\
\mathcal{K}_1&=\{\text{parallel spinors}\}\subset\Gamma S\nonumber
\end{align}
and consider the maps 
\[
  \X(M)
\stackrel{\e}{\longrightarrow}
\X(\hat M)_0
\qquad\text{and}\qquad
 \Gamma S
\stackrel{\o}{\longrightarrow}
\X(\hat M)_1\;.
\]
The set $\mathfrak{Z}$ is defined in Definition \ref{defthirdorder}. Then
\begin{equation}
\e(\mathcal{K}_0)\oplus\mathfrak{Z}\oplus\o(\mathcal{K}_1)
\end{equation}
is a finite supersymmetric Killing structure on $\hat M$ with the following brackets
\begin{align*}
\left[\e(\mathcal{K}_0),\e(\mathcal{K}_0)\right]& \subset \e(\mathcal{K}_0)&
\left[\o(\mathcal{K}_1),\o(\mathcal{K}_1)\right]& \subset \e(\mathcal{K}_0)\oplus \mathfrak{Z} \\
\left[\e(\mathcal{K}_0),\mathfrak{Z}\right]& \subset \mathfrak{Z} &
\left[\e(\mathcal{K}_0),\o(\mathcal{K}_1)\right]& \subset \o(\mathcal{K}_1) \\
\left[\o(\mathcal{K}_1),\mathfrak{Z}\right]& = \left[\mathfrak{Z},\mathfrak{Z}\right] = 0&&
\end{align*}
{\em (2)\ }\ Let $M$ be as in part 1, but of quaternionic type ($\dim M=4\mod 8$). Define $\mathcal{K}_{1,k}\subset \Gamma S$ by
\begin{align*}
\mathcal{K}_{1,1}&={\rm span}_\RRR\{\Phi_1,\Phi_3\} \\
\mathcal{K}_{1,2}&={\rm span}_\RRR\{\Phi_2,\Phi_4\}
\end{align*}
and  consider $\mathfrak{Z}_1$ and $\mathfrak{Z}_2$ from Definition \ref{defthirdorder}. Then
\begin{equation}
\e(\mathcal{K}_0)\oplus\mathfrak{Z}_k \oplus\o(\mathcal{K}_{1,k})
\end{equation}
is a finite supersymmetric Killing structure on $\hat M$ for $k=1,2$ and the brackets are the same as in the previous case with the obvious replacements.
\end{theorem}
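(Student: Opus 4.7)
The plan is to verify the six bracket inclusions of Definition \ref{defiSSKS} for the chosen subspaces $\mathcal{K}_0,\mathcal{K}_1,\mathfrak{Z}$. Four of these follow directly from results in Sections 3.2--3.4; the substantive content is the vanishing of $[\o(\mathcal{K}_1),\mathfrak{Z}]$ and $[\mathfrak{Z},\mathfrak{Z}]$, where the $SU(n)$-holonomy and the purity of $\eta,\eta^C$ enter.

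First I would dispatch the routine brackets. The even-even identity $[\e(X),\e(Y)] = \e([X,Y]_0)$ is Theorem \ref{7}, and Killing vector fields close under the bracket on $M$. The even-odd identity is \eqref{evenodd}; the proof there already shows $\e(X)\phi$ is parallel when $X$ is Killing and $\phi$ is parallel, so $[\e(\mathcal{K}_0),\o(\mathcal{K}_1)]\subset \o(\mathcal{K}_1)$. The odd-odd bracket for parallel spinors, by the corollary to Theorem \ref{oddoddcomm}, takes the form $\pm\{\phi,\psi\} + \mathfrak{B}^{(2)}(R;\phi,\psi)$: I would verify that the Dirac current $\{\phi,\psi\}$ of two parallel spinors is itself parallel (since $\nabla C=0$ and $\nabla$ commutes with Clifford multiplication), hence a Killing vector field, and that $\mathfrak{B}^{(2)}(R;\phi,\psi)$ lies in $\mathfrak{Z}$ by the case-by-case evaluations preceding Definition \ref{defthirdorder} (Lemma \ref{purezero} together with the computations following it and Remark \ref{commoftherealbase}). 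Finally, $[\e(X),\mathfrak{Z}]\subset\mathfrak{Z}$ is equation \eqref{thirdorderandeven}: the output involves $\mathfrak{B}^{(2)}(R;\e(X)\eta,\eta^C)+\mathfrak{B}^{(2)}(R;\eta,\e(X)\eta^C)$, which remains a combination of the generators of $\mathfrak{Z}$ since $\e(X)\eta$ is parallel.

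The main obstacle is to show $[\o(\mathcal{K}_1),\mathfrak{Z}] = 0$ and $[\mathfrak{Z},\mathfrak{Z}] = 0$. From \eqref{thirdorderandodd} the former equals
\[
\mathfrak{B}^{(3)}(\nabla R;\varphi,\eta,\xi) - R^m{}_{jkl}\gamma^j\varphi\wedge\gamma^k\eta\wedge\gamma^l\xi\otimes\nabla_m,
\]
while the latter equals $\mathfrak{B}^{(4)}(\widetilde R;\eta,\eta^C,\eta,\eta^C)$ with $\widetilde R$ built from $R$ as in \eqref{27}. The $SU(n)$-holonomy places $R$ in $\mathfrak{su}(n)\subset\mathfrak{so}(2n)$, so via the embedding \eqref{standardEmb} only the mixed components $R_{i\bar\jmath}$ survive; parallel transport preserves the reduction, so the same holds for $\nabla R$ and hence for $\widetilde R$. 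Combined with the purity relations $\gamma^{\bar\imath}\eta = 0$ and $\gamma^i\eta^C = 0$ and the annihilation $R_{kl}\eta = R_{kl}\eta^C = 0$ (parallel spinors lie in the kernel of the curvature), I would show that for every assignment of entries from $\{\eta,\eta^C\}$ each $\gamma$-factor is either annihilated by the spinor on which it acts or forced into an index configuration incompatible with the mixed type of $R$, $\nabla R$, or $\widetilde R$. The mechanism is the one already used to prove $\mathfrak{B}^{(2)}(R;\eta,\eta)=0$ in Lemma \ref{purezero}(2); here it must be carried out for three- and four-fold products in the hermitian basis, which is the technically delicate step.

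For the quaternionic case the verification is structurally identical once one restricts to one of the $R$-symmetry sectors $\mathcal{K}_{1,k}$. By Remark \ref{commoftherealbase} only $\mathcal{Z}_k$ arises in odd-odd brackets within $\mathcal{K}_{1,k}$, and the purity/holonomy arguments of the previous paragraph apply to the pair $(\eta_k,\eta_{\bar k})$ without change, giving $[\o(\mathcal{K}_{1,k}),\mathfrak{Z}_k] = 0$ and $[\mathfrak{Z}_k,\mathfrak{Z}_k] = 0$. The sector-wise restriction is what keeps the SSKS finite: on the full four-dimensional parallel spinor space the mixed brackets would also produce $\mathcal{Z}_3,\mathcal{Z}_4$, and the action of the odd fields on those elements involves cross terms $(\eta_1,\eta_2)$ which do not satisfy a joint purity condition, so the vanishing of paragraph three fails and the center would no longer be finite.
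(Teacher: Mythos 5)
Your outline is sound and the routine brackets are dispatched exactly as in the paper (Theorem \ref{7}, equation \eqref{evenodd}, the corollary to Theorem \ref{oddoddcomm}, equation \eqref{thirdorderandeven}, and the parallelism of the Dirac current $\{\phi,\psi\}$, which you rightly make explicit). Where you diverge is at the decisive step $[\o(\mathcal{K}_1),\mathfrak{Z}]=0$: you propose to expand $\mathfrak{B}^{(3)}(\nabla R;\cdot,\cdot,\cdot)$ and the $\mathfrak{D}$-term in the hermitian basis and kill each term using the $\mathfrak{su}(n)$-reduction of $R$ and $\nabla R$ together with the purity relations $\gamma^{\bar\imath}\eta=\gamma^i\eta^C=0$ and the complex Bianchi identities $\nabla_{[i}R_{j]\bar m}=R^m{}_{[ij]\bar n}=0$. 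That mechanism does work, but you defer it as ``the technically delicate step,'' and it is precisely the computation the paper avoids: by \eqref{auswertung} the single generator satisfies $\mathcal{Z}=2\,\mathfrak{B}^{(2)}(R;\eta_1,\eta_1)=2\,\mathfrak{B}^{(2)}(R;\eta_2,\eta_2)$ with \emph{equal} real entries, so $[\o(\eta_k),\mathcal{Z}]=2[\o(\eta_k),\mathfrak{B}^{(2)}(R;\eta_k,\eta_k)]=0$ follows at once from Corollary \ref{easy}, i.e.\ from the real Bianchi identities alone, with no case analysis in the hermitian basis. Your route buys nothing here that the rewriting does not, and costs a three-index computation you have not performed; if you keep your route you must actually carry it out. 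Two smaller points: $[\mathfrak{Z},\mathfrak{Z}]=0$ needs no analysis of $\widetilde R$ at all, since $\mathfrak{Z}=\RRR\mathcal{Z}$ is one-dimensional and $\mathcal{Z}$ is even, so the bracket vanishes by skew-symmetry (as the paper remarks after \eqref{27}); and for $[\e(X),\mathfrak{Z}]\subset\mathfrak{Z}$ ``$\e(X)\eta$ is parallel'' is not quite enough for the \emph{real} span $\RRR\mathcal{Z}$ --- one should note, as the paper does, that $\e(X)\eta=z\eta$ and $\e(X)\eta^C=\bar z\eta^C$ (using $[\gamma^{i\bar\jmath},\gamma^{\bar k}]\eta=2g^{i\bar k}\gamma^{\bar\jmath}\eta=0$), so the coefficient $z+\bar z$ is real. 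Your reading of the quaternionic case and of why the sector-wise restriction to $\mathcal{K}_{1,k}$ is what preserves finiteness agrees with the paper.
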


\begin{proof}
We consider first the real respective complex case and note that we only have to show the commutator relations with one entry being $\mathcal{Z}=\mathfrak{B}^{(2)}(R;\eta,\eta^C)$. For a Killing vector field $X$ we get from \eqref{thirdorderandeven}
\begin{equation}
\left[\e(X),\mathcal{Z}\right] =
\mathfrak{B}^{(2)}(R;\e(X)\eta,\eta^C)+\mathfrak{B}^{(2)}(R;\eta,\e(X)\eta^C)\;. \nonumber
\end{equation}
We know that  the fields $\e(X)\eta $ and $\e(X)\eta^C$ are parallel for a Killing vector field $X$ which is enough for the proof. But we have more informations:  we area able to write the resulting spinors as a linear combination of $\eta$ and $\eta^C $. In fact, for a Killing vector field $X$ we have the properties
\begin{equation}
\e(X)\eta=z\eta\,,\quad \e(X)\eta^C=\bar z\eta^C
\end{equation}
for $z\in\CCC$ because $\gamma^{\bar k}\e(X)\eta = [\nabla X,\gamma^{\bar k}]\eta=(\nabla_iX)_{\bar\jmath}[\gamma^{i\bar\jmath},\gamma^{\bar k}]\eta= 2g^{i\bar k}(\nabla_i X)_{\bar\jmath }\gamma^{\bar\jmath}\eta=0$. This yields the explicit form
\begin{equation}\begin{aligned}
\left[\e(X),\mathcal{Z}\right]&=\mathfrak{B}^{(2)}(R;\e(X)\eta,\eta^C)
+\mathfrak{B}^{(2)}(R;\eta,\e(X)\eta^C) \\
&=  (z+\bar z)\mathfrak{B}^{(2)}(R;\eta,\eta^C) 
\end{aligned}\end{equation}
The next step is the bracket with an element in $\o(\mathcal{K}_0)$. Because of the linearity, we can restrict to the images of $\eta_1$ and $\eta_2$.

Recalling \eqref{auswertung} and Corollary \ref{easy} yields
\begin{equation}
\big[\o(\eta_1),\mathcal{Z}\big]=\big[\o(\eta_1),\mathfrak{B}^{(2)}(R;\eta_1,\eta_1)\big]=0.
\end{equation}
The argument is the same for  $\eta_2$.

For the proof of the quaternionic case as stated we have nothing to change but the notation.
\end{proof}


\subsubsection{Complex  SSKS in the $SU(n)$ case}

In the case of complex special graded manifolds we do not have to take care about the doubling of the spinor bundle in dimension four modulo eight so that we end up at

\begin{theorem}\label{generalSUfallC}
Let $M$ be a simply connected, Ricci flat, non flat K\"ahler manifold  of dimension $ 0,2,$ or $4\mod8$, $S$ be the complex spinor bundle and $\hat M$ be the associated complex  special graded manifold. Denote  its parallel pure  spinors by $\eta$ and $\eta^C$. Furthermore let
\begin{align}
\mathcal{K}_0&=\{\text{complexified Killing vector fields}\}\subset\X_\CCC(M) \nonumber\\
\mathcal{K}_1&=\{\text{parallel spinors}\}\subset\Gamma S\nonumber
\end{align}
and consider the maps 
\[
  \X(M)
\stackrel{\e}{\longrightarrow}
\X(\hat M)_0
\qquad\text{and}\qquad
 \Gamma S
\stackrel{\o}{\longrightarrow}
\X(\hat M)_1
\]
The center  is defined  by   $\mathfrak{Z}=\CCC\cdot\mathcal{Z}$ with  \eqref{egal1}. Then 
\begin{equation}
\e(\mathcal{K}_0)\oplus\mathfrak{Z}\oplus\o(\mathcal{K}_1)
\end{equation}
is a finite supersymmetric Killing structure on $\hat M$, with the same brackets as in Theorem \ref{generalSUfall} part 1.
\end{theorem}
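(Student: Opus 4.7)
The plan is to reduce everything to the real case in Theorem \ref{generalSUfall}, exploiting the fact that over $\CCC$ the parallel spinors form a 2-dimensional space $\mathcal{K}_1 = \CCC\eta \oplus \CCC\eta^C$ regardless of $D \bmod 8$; there is no need to double the bundle in dimension $4 \bmod 8$, so the complex quaternionic case collapses to the same shape as the complex and real ones.

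First I would verify the easy closure relations. The bracket $[\e(X),\e(Y)] = \e([X,Y]_0)$ is immediate from Theorem \ref{7}. For $[\e(X),\o(\phi)]$ with $X$ Killing and $\phi$ parallel, the formula $\o_+(\e(X)\phi) \in \o(\mathcal{K}_1)$ is exactly equation \eqref{evenodd} of the text, and preservation of $\mathcal{K}_1$ uses the identity $[\nabla_X - \nabla X, \nabla_Y]\phi = \nabla_{[X,Y]_0}\phi$ already recorded.

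Next I would handle the odd-odd brackets. For parallel $\phi,\psi \in \mathcal{K}_1$ the Corollary of Theorem \ref{oddoddcomm} reduces the bracket to $\{\phi,\psi\} + \mathfrak{B}^{(2)}(R;\phi,\psi)$. The pairing $\{\phi,\psi\}$ is a parallel vector field, hence in $\mathcal{K}_0$. Writing $\phi = a\eta + b\eta^C$, $\psi = c\eta + d\eta^C$ and using bilinearity of $\mathfrak{B}^{(2)}$ together with Lemma \ref{purezero}(2) (which asserts $\mathfrak{B}^{(2)}(R;\eta,\eta) = \mathfrak{B}^{(2)}(R;\eta^C,\eta^C) = 0$), one gets $\mathfrak{B}^{(2)}(R;\phi,\psi) = (ad+bc)\mathcal{Z} \in \mathfrak{Z}$; here one also uses that swapping the two spinor entries leaves $\mathfrak{B}^{(2)}$ invariant (anticommutativity of odd wedge compensates the antisymmetry of $R_{ij}$).

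For $[\e(X),\mathcal{Z}]$ I would invoke the complex coordinate commutation $[\gamma^{i\bar\jmath},\gamma^{\bar k}] = 2g^{i\bar k}\gamma^{\bar\jmath}$: it yields $\gamma^{\bar k}\e(X)\eta = 0$, so the pure spinor $\e(X)\eta$ must be a multiple $z\eta$, and similarly $\e(X)\eta^C = \bar z\eta^C$. Plugging into equation \eqref{thirdorderandeven} of Proposition \ref{thirdordercomm} gives $[\e(X),\mathcal{Z}] = (z+\bar z)\mathcal{Z} \in \mathfrak{Z}$. The step I expect to be the main obstacle is $[\o(\phi),\mathcal{Z}] = 0$, because Corollary \ref{easy} only covers the diagonal case $[\o(\varphi),\mathfrak{B}^{(2)}(R;\varphi,\varphi)] = 0$. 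The way around is polarization using the two independent pure spinors: from
\[
\mathfrak{B}^{(2)}(R;\eta+\eta^C,\eta+\eta^C) = 2\mathcal{Z}, \qquad
\mathfrak{B}^{(2)}(R;\eta+i\eta^C,\eta+i\eta^C) = 2i\mathcal{Z},
\]
Corollary \ref{easy} applied to the parallel spinors $\eta+\eta^C$ and $\eta+i\eta^C$ gives $[\o(\eta)+\o(\eta^C),\mathcal{Z}] = 0$ and $[\o(\eta)+i\o(\eta^C),\mathcal{Z}] = 0$, so by $\CCC$-linearity $[\o(\eta),\mathcal{Z}] = [\o(\eta^C),\mathcal{Z}] = 0$ and hence the whole $[\o(\mathcal{K}_1),\mathfrak{Z}] = 0$. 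Finally $[\mathfrak{Z},\mathfrak{Z}] = 0$ is automatic since $\mathcal{Z}$ is even and the super bracket of an even element with itself vanishes. These items together give the claimed superalgebra structure and its finiteness.
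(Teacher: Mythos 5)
Your proposal is correct and follows essentially the same route as the paper: the paper's proof of Theorem \ref{generalSUfallC} simply refers back to Theorem \ref{generalSUfall}, whose proof likewise uses the purity computation $\gamma^{\bar k}\e(X)\eta=0$ together with \eqref{thirdorderandeven} for $[\e(X),\mathcal{Z}]$, and Corollary \ref{easy} applied to the two real combinations $\eta_1,\eta_2$ of $\eta,\eta^C$ (via \eqref{auswertung}) for $[\o(\mathcal{K}_1),\mathfrak{Z}]=0$. Your polarization with $\eta+\eta^C$ and $\eta+i\eta^C$ is just a change of basis for the same argument, and the remaining closure relations you cite are the ones the paper uses.
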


\begin{proof}
The poof is the same as for Theorem \ref{generalSUfall}.
\end{proof}

\subsubsection{K\"{a}hler 4-folds}

Taking into account the classification of \cite{WangMK1} we prove the following theorem.

\begin{theorem}
Let $M$ be a simply connected four dimensional Riemannian manifold with complex spinor bundle $S$.  Furthermore let $\mathcal{K}_0$ be the space of (complexified) Killing vector fields on $M$, $\mathcal{K}_1=\{\varphi\in\Gamma S;\ \nabla\varphi=0\}$ and  $\e,\o$ like before.
We have to distinguish three cases
\begin{itemize}[leftmargin=2em]
\item $\dim \mathcal{K}_1 =0$ and $(\mathcal{K},\mathfrak{e},\mathfrak{o})$ is an even supersymmetric Killing structure  (there are no odd generators).
\item $\dim\mathcal{K}_1>0$ and $\mathcal{K}_1\subset\Gamma S^{+}$ [or $\Gamma S^-$]. In this case    $M$ is a Ricci flat, non flat, half conformally flat K\"{a}hler manifold and $(\mathcal{K},\mathfrak{e},\mathfrak{o})$ gives rise to  a supersymmetric  Killing structure of odd dimension two where the central part of the complex SSKS  from  Theorem  \ref{generalSUfallC} vanishes whereas the central parts of the two real SSKS from Theorem \ref{generalSUfall}.2 do not. In the second case the central parts coincide. 

\item $\dim\mathcal{K}_1>0$ and $\mathcal{K}_1\cap \Gamma S^{\pm}\neq\emptyset$. Then M is flat and  $\mathcal{K}_1=S$.
\end{itemize}
\end{theorem}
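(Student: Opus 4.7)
The proof splits according to the dimension and chirality content of $\mathcal{K}_1$, which in dimension four is completely controlled by the holonomy representation. I would first recall the classification of Riemannian holonomies in dimension four for simply connected irreducible manifolds and the Wang classification stated at the beginning of Section \ref{SecSUn}: the only non-flat possibility compatible with a non-zero parallel spinor is $SU(2)=Sp(1)$, producing $N=2$ parallel complex spinors. Using the isomorphism $\mathfrak{so}(4)\simeq\mathfrak{su}(2)^{+}\oplus\mathfrak{su}(2)^{-}$ which acts via one summand on each chirality half, the hypotheses of the three cases in the theorem translate directly into the location of the holonomy reduction: case (a) is no reduction, case (b) is a reduction into a single $\mathfrak{su}(2)^{\pm}$, and case (c) forces a trivial holonomy.

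Case (a) is immediate: with $\mathcal{K}_{1}=0$ the defining relations of Definition \ref{defiSSKS} collapse to $[\e(X),\e(Y)]=\e([X,Y])$, which is Theorem \ref{7}. Case (c) also follows without further calculation: a parallel spinor in $S^{+}$ forces the holonomy into the $\mathfrak{su}(2)^{-}$ factor (it acts trivially on $S^{+}$), and a parallel spinor in $S^{-}$ forces it into $\mathfrak{su}(2)^{+}$; the intersection is trivial, so by simple connectedness $M\simeq\RRR^{4}$ and consequently every section of $S$ is parallel.

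The non-trivial case is (b). The reduction to $SU(2)$ makes $M$ K\"ahler and Ricci-flat; half conformal flatness is a direct consequence, since the curvature, taking values in one of the two $\mathfrak{su}(2)$ summands of $\mathfrak{so}(4)$, has vanishing projection onto the opposite chirality of $\Lambda^{2}$, so the associated (anti-)self-dual part of the Weyl tensor vanishes. Any parallel spinor in $S^{+}$ is $SU(2)$-invariant, hence pure, so the two parallel complex spinors are precisely the $\eta,\eta^{C}$ of Section \ref{SecSUn}. I then apply Theorem \ref{generalSUfallC} to produce a complex SSKS with putative center $\CCC\mathcal{Z}$, and Theorem \ref{generalSUfall}.2 to produce two real SSKS with centers $\RRR\mathcal{Z}_{1}$ and $\RRR\mathcal{Z}_{2}$. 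The odd dimension of each structure is $2$ since $\dim_{\CCC}\mathcal{K}_{1}=2$ in the complex case and $\dim_{\RRR}\mathcal{K}_{1,k}=2$ in the real cases.

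What remains is to evaluate the three centers in the coincidence $n=2$. From Lemma \ref{purezero} only the mixed terms $\gamma^{i}\eta\wedge\gamma^{\bar\jmath}\eta^{C}\owedge R_{i\bar\jmath}$ survive. Working in the Fock model of the spinor representation, where (up to normalisation) $\eta=e_{\bar{1}}\wedge e_{\bar{2}}$ and $\eta^{C}=1$, a direct computation gives
\[
\gamma^{i}\eta\wedge\gamma^{\bar\jmath}\eta^{C}\;=\;g^{i\bar\jmath}\,(e_{\bar{1}}\wedge e_{\bar{2}}),
\]
an identity specific to $n=2$ where $\dim_{\CCC}\Lambda^{2}S^{-}=1$. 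Contracting with $R_{i\bar\jmath}$ produces $g^{i\bar\jmath}R_{i\bar\jmath}$, which is the Ricci form and therefore vanishes; so $\mathcal{Z}=0$ in the complex SSKS, reducing it to an ordinary super Lie algebra without central extension. For the real (quaternionic) SSKS the same spinor identities apply, but the two wedged factors now live in different slots of the doubled bundle $S\subset S_{\CCC}\oplus S_{\CCC}$, so the wedge is a bivector between the two slots rather than inside a one-dimensional space, and no Ricci contraction occurs; this produces the explicit non-vanishing expressions for $\mathcal{Z}_{1}$ and $\mathcal{Z}_{2}$ from the lemma preceding Remark \ref{commoftherealbase}. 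The equality $\mathfrak{Z}_{1}=\mathfrak{Z}_{2}$ finally follows from the same Fock-model computation: the swap of the two slots of the doubled bundle is an automorphism (up to sign), exchanging the formulas for $\mathcal{Z}_{1}$ and $\mathcal{Z}_{2}$, so in dimension four they are scalar multiples of each other.

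The principal obstacle is the bookkeeping of the pure-spinor identities in the Fock model and the recognition that it is precisely the low-dimensional coincidence $\dim_{\CCC}\Lambda^{2}S^{-}=1$ that produces the Ricci contraction and forces $\mathcal{Z}=0$ in the complex case; once this calculation is in place, the remaining structural claims follow directly from Section \ref{SecSUn}.
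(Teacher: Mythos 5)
Your proposal is correct in substance but takes a genuinely different route from the paper's at the two places where real work happens. For the trichotomy the paper argues through the curvature decomposition: Ricci- and scalar-flatness leave $W_{kl}\phi=0$, and the algebraic fact that a parallel spinor of fixed chirality kills the corresponding half $W^{\pm}$ of the Weyl tensor (citing \cite[Thm 1.13]{BFGK}) shows that parallel spinors of both chiralities force $W=0$ and hence flatness; you instead read everything off the splitting $\so(4)\simeq\su(2)^{+}\oplus\su(2)^{-}$ and the triviality of the stabilizer of a nonzero vector in the standard $\su(2)$-module. Both are sound; yours is shorter, the paper's makes the half-conformal-flatness claim explicit on the way. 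For the vanishing of the complex center the paper substitutes the explicit $\gamma$-matrices of Appendix \ref{SecDim4} and checks term by term that $\mathfrak{B}^{(2)}(R;e_2,e_4)=0$, whereas you observe that for $n=2$ all the wedges $\gamma^{i}\eta\wedge\gamma^{\bar\jmath}\eta^{C}$ land in the one-dimensional top exterior power of the opposite half-spinor module with coefficient proportional to $g^{i\bar\jmath}$, so the whole expression is the Ricci contraction $g^{i\bar\jmath}R_{i\bar\jmath}$ and vanishes by Ricci-flatness. That is a cleaner explanation of \emph{why} the brute-force computation comes out zero, and it also explains at once why the obstruction survives in the doubled quaternionic bundle, where the mixed bivectors $\binom{\gamma^{k}\eta}{0}\wedge\binom{0}{\gamma^{\bar l}\eta^{C}}$ remain linearly independent and the non-flatness of $M$ keeps some $R_{k\bar l}$ nonzero.

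One step of your argument is imprecise: the claim that the slot swap of $S_{\CCC}\oplus S_{\CCC}$ ``exchanges the formulas for $\mathcal{Z}_{1}$ and $\mathcal{Z}_{2}$, so they are scalar multiples of each other.'' The swap carries $(\eta_{1},\eta_{\bar 1})$ to $(-\eta_{\bar 2},\eta_{2})$, hence carries $\mathcal{Z}_{1}$ to the image of $-\mathcal{Z}_{2}$ under the induced linear map on $\Lambda^{2}(S_{\CCC}\oplus S_{\CCC})$; since that induced map is not the identity and the target space is (as you yourself note) not one-dimensional, this alone does not give $\mathcal{Z}_{1}=\mathcal{Z}_{2}$ as elements. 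The identification actually rests on the dimension-four identity $\gamma^{i}\eta=\epsilon_{i\bar\jmath}\gamma^{\bar\jmath}\eta^{C}$ recorded in the remark following the theorem, which matches the two families of image spinors and lets the two explicit expressions be compared term by term; the paper settles this by carrying out the second computation and observing it yields the same value. The rest of your argument stands.
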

\begin{proof}
The first statement is clear. 
Suppose that $M$ has one parallel spinor of negative chirality, say. The curvature of a four dimensional Riemannian manifold decomposes in the Weyl curvature and the (in our case vanishing) Ricci- and scalar curvature part. From the fact that the Ricci curvature and the scalar curvature vanish we are left with the following relation between the Weyl curvature $W$ of a Riemannian manifold and a parallel spinor $\phi$:
\[
W_{kl}\phi=0.
\]
An algebraic computation shows  that  a  parallel spinor $\phi\in\Gamma S^\pm$ leads to $W^\pm=0$, see \cite[Thm 1.13]{BFGK}. 

The assumption that $M$ is non-flat implies that all parallel spinors on $M$ are of the same chirality, e.g.\ of negative chirality, so that the curvature is anti self dual. Since the curvature restricted to the bundle $S^-$ is zero, and $M$ is simply connected we get that  $S^-$ is trivial. 
Furthermore we know from the classification of manifolds with parallel spinors that in the non-flat case the existence of a parallel spinor yields $\dim \mathcal{K}_1=2$. This is the first part of the second statement.

For $M$ admitting one parallel spinor of each chirality, the above yields $W=0$. So the manifold is flat which is the third statement.

We turn back to the second statement and assume the manifold to be anti self dual, i.e.  the curvature obeys
\[
R_{kl}=R_{kl}^-=\frac{1}{2}(R_{kl}-\frac{1}{2}\varepsilon_{klij}R^{ij})\;.
\]
We show that the center vanishes by using an explicit realization of the gamma matrices with respect to which the parallel spinors are given by $e_2$ and $e_4$, see Appendix \ref{SecDim4}. 
\begin{equation}\begin{aligned}
\mathfrak{B}^{(2)}(R;\phi,\psi) & = \gamma^k\phi\wedge\gamma^l\psi\,R_{kl} \\
& = \left(\gamma^1\phi\wedge\gamma^2\psi-\gamma^2\phi\wedge\gamma^1\psi-\gamma^3\phi\wedge\gamma^4\psi+\gamma^4\phi\wedge\gamma^3\psi\right)R_{12}\\
&\quad +\left(\gamma^1\phi\wedge\gamma^3\psi-\gamma^3\phi\wedge\gamma^1\psi+\gamma^2\phi\wedge\gamma^4\psi-\gamma^4\phi\wedge\gamma^2\psi\right)R_{13}\\
&\quad +\left(\gamma^1\phi\wedge\gamma^4\psi-\gamma^4\phi\wedge\gamma^1\psi-\gamma^2\phi\wedge\gamma^3\psi+\gamma^3\phi\wedge\gamma^2\psi\right)R_{14},
\end{aligned}\end{equation}
which reduces for $\psi=\phi$ to
\begin{equation}\begin{aligned}
\mathfrak{B}^{(2)}(R;\phi ,\psi) &= \gamma^k\phi\wedge\gamma^l\psi R_{kl}\\
&= 2\left(\gamma^1\phi\wedge\gamma^2\phi-\gamma^3\phi\wedge\gamma^4\phi\right)R_{12}
+2\left(\gamma^1\phi\wedge\gamma^3\phi+\gamma^2\phi\wedge\gamma^4\phi\right)R_{13}\\
&\quad +2\left(\gamma^1\phi\wedge\gamma^4\phi-\gamma^2\phi\wedge\gamma^3\phi\right)R_{14}.
\end{aligned}\end{equation}
This expression vanishes for all combinations of parallel spinors. This is easily computed  in our choice of gamma matrices.

The anti-chiral constant spinors $e_2$ and $e_4$ give a basis for $ S_\CCC$. This choice of basis also fulfills
\begin{equation}
e_2^C= -e_4,\quad e_4^C=e_2
\end{equation}
and $e_2$ is pure\footnote{
In our explicit description the kernel is given by $\gamma^1-i\gamma^3$ and $\gamma^2+i\gamma^4$}. 
Consequently the terms with equal entries vanish by Lemma \ref{purezero} and we are left with the mixed term.

For the real SSKS we choose the basis like in \eqref{realquatbase}. Then  the term with different entries vanishes because of Corollary \ref{commoftherealbase}. In this case we are left with the two terms with entries $(e_2,0),(0,-e_4)$ and $(e_4,0),(0,e_2)$ for the two SSKS given by $\mathcal{K}_{1,k}$ respective. 

First we consider the complex SSKS and calculate $\mathfrak{B}^{(2)}(R;e_2,e_4)$.
\begin{align}
&\big( \gamma^1e_2\wedge\gamma^2e_4-\gamma^2e_2\wedge\gamma^1e_4- \gamma^3e_2\wedge \gamma^4e_4+ \gamma^4e_2\wedge\gamma^3e_4\big) R_{12}\nonumber\\
&\ +\big(\gamma^1e_2\wedge\gamma^3e_4-\gamma^3e_2\wedge\gamma^1e_4+ \gamma^2e_2\wedge \gamma^4e_4- \gamma^4e_2\wedge\gamma^2e_4\big)R_{13}\nonumber\\
&\ +\big(\gamma^1e_2\wedge \gamma^4e_4-\gamma^4e_2\wedge \gamma^1e_4- \gamma^2e_2\wedge \gamma^3e_4+ \gamma^3e_2\wedge \gamma^2e_4 \big)R_{14}\nonumber\\
&=\big(-e_1\wedge(-e_1)- e_3\wedge(-e_3)- ie_1\wedge ie_1+ ie_3\wedge(-ie_3)\big) R_{12}\nonumber\\
&\quad+\big(-e_1\wedge(-ie_3)-ie_1\wedge(-e_3)+ e_3\wedge ie_1- ie_3\wedge(-e_1)\big)R_{13}\nonumber\\
&\quad+\big(-e_1\wedge ie_1-ie_3\wedge (-e_3)- e_3\wedge (-ie_3)+ ie_1\wedge (-e_1)\big)R_{14}\\
&= 0
\end{align}

Next we look at the real SSKS and the only two third order terms we have to calculate are $\mathfrak{B}^{(2)}(R;(e_2,0),(0,-e_4))$ and $\mathfrak{B}^{(2)}(R;(e_4,0),(0,e_2))$. The first one is 
\begin{align*}
&\textstyle 
	\Big(\gamma^1\binom{e_2}{0}\wedge\gamma^2\binom{0}{ -e_4}
	-\gamma^2\binom{e_2}{0}\wedge\gamma^1\binom{0}{-e_4}
	- \gamma^3\binom{e_2}{0}\wedge\gamma^4\binom{0}{-e_4}\\
&\ \textstyle
	+ \gamma^4\binom{e_2}{0}\wedge\gamma^3\binom{0}{-e_4}\Big) R_{12}
	+\Big(\gamma^1\binom{e_2}{0} \wedge\gamma^3\binom{0}{-e_4}
	- \gamma^3\binom{e_2}{0}\wedge\gamma^1\binom{0}{ -e_4}\\
&\ \textstyle
	+\gamma^2\binom{e_2}{0}\wedge \gamma^4\binom{e_4}{ 0}
	- \gamma^4\binom{e_2}{0}\wedge\gamma^2\binom{0}{-e_4}\Big)R_{13}
	+\Big(\gamma^1\binom{e_2}{0}\wedge \gamma^4\binom{0}{-e_4}\\
&\ \textstyle
	- \gamma^4\binom{e_2}{0}\wedge \gamma^1\binom{0}{ -e_4}
	-\gamma^2\binom{e_2}{ 0}\wedge \gamma^3\binom{0}{-e_4}
	+\gamma^3\binom{e_2}{ 0}\wedge \gamma^2\binom{0}{-e_4} \Big)R_{14}\displaybreak[0]\\
=&\textstyle
	\Big( -\binom{e_1}{0}\wedge\binom{0}{e_1} 
	- \binom{e_3}{0}\wedge\binom{0}{ e_3}
	+\binom{ie_1}{0}\wedge\binom {0}{ie_1}
	+\binom{ ie_3}{0}\wedge\binom{0}{ie_3}
	\Big) R_{12}\\
&\ \textstyle
	+\Big(-\binom{e_1}{0}\wedge\binom{0}{ ie_3}
	-\binom{ie_1}{0}\wedge\binom{0}{e_3}
	- \binom{e_3}{0}\wedge\binom {0}{ ie_1}
	-\binom {ie_3}{0}\wedge\binom{0}{e_1}\Big)R_{13}\\
&\ \textstyle
	+\Big(\binom{e_1}{0}\wedge \binom{0}{ ie_1}
	-\binom{ie_3}{ 0}\wedge\binom {0}{e_3}
	- \binom{e_3}{ 0}\wedge \binom{0}{ ie_3}
	+ \binom{ie_1}{0}\wedge \binom{0}{ e_1}\Big)R_{14}\displaybreak[0]\\
=& \textstyle
	-2\Big( \big(\binom{e_1}{ 0}\wedge\binom{0}{ e_1}
 	+\binom{e_3}{0}\wedge\binom{0}{e_3}\big) R_{12}
    -i\big(\binom{e_1}{0}\wedge\binom{0}{ e_3} 
	+\binom{e_3}{0}\wedge\binom{0}{e_1}\big)R_{13}\\
&\ \textstyle  
	+i\big(\binom{e_1}{0}\wedge \binom{0}{ e_1}
	-\binom{e_3}{0}\wedge \binom{0}{e_3}\big)R_{14}\Big)
\end{align*}
and a similar calculation shows that the second term gives the same value.
\end{proof}

\begin{remark}
The last proposition reflects a special property of dimension four.  
In this dimension we are able to choose the ON-basis of the base manifold  in such a way that $\gamma^{\bar k}\eta=\gamma^k\eta^C=0$. 
But moreover we are able to choose $\gamma^i\eta=\epsilon_{i\bar \jmath}\gamma^{\bar \jmath}\eta^C$ which induces the symmetry between the centers of the real supersymmetric Killing structures.
\end{remark}


\subsection{$Spin(7)$ and $G_2$ manifolds}\label{SecSpin7}

Recalling once more the classification result by Wang, an eight dimensional Riemannian manifold of holonomy ${Spin}(7)$ has exactly one parallel spinor. This yields that the SSKS for ${Spin}(7)$ manifolds has an one dimensional center if we define $\mathcal{K},\e$ and $\o$ as before. 

Although a seven dimensional Riemannian manifold of holonomy $G_2$ has exactly one parallel spinor as well,  we can not state such a result  because of Remark \ref{whenSGM}. Nevertheless, we will turn back to $G_2$ when we discuss Brinkmann spaces next.


\subsection{Brinkmann spaces with finite SSKS}

\subsubsection{PP-waves}

We will start this section on Brinkmann spaces by investigating pp-waves. They form a special class of  Brinkmann spaces. 
The importance of pp-waves  in physics, and there in particular in the analysis of supergravity models, grew very fast in the last two years. 
This is due to the fact  that these  manifolds have a  large number of Killing spinors. This is the reason why they yield so much profit: they preserve much of the supersymmetries of the gravity model (see e.g.\ \cite{BlauFigHullPapa1}, \cite{BlauFigHullPapa2}, \cite{BenaRoiban}, \cite{Michelson}, or  \cite{FigPapado1}). 

To fix notation we assume the manifold to be of dimension $D=n+2$. A Brinkmann space is a Lorentzian manifold with a light like parallel vector field. This forces the holonomy to be contained in $\mathfrak{so}(n)\ltimes\RRR^n$, because the holonomy representation has to possess a singlet which excludes the types I and III  in the classification of \cite{BergIkem11} (the part of the holonomy denoted by $\mathscr{A}$  has to vanish).

In contrast to the Riemannian case the existence of the parallel light-like vector field does not lead to a decomposition of the space, although the holonomy is reduced (see for example the discussion in \cite{BaumKath1} and references therein). 

The pp-waves are those Brinkmann spaces with parallel spinors and nontrivial abelian holonomy group. 
The existence of a parallel spinor reduces the holonomy to a subalgebra $\mathfrak{g}\ltimes\RRR^n$, where $\mathfrak{g}\subset\mathfrak{spin}(n)$ annihilates  at least one spinor.  
In \cite{Fig3} it is shown that  in low dimensions ($D\leq 5$) every Brinkmann space which admits a parallel spinor is a pp-wave, i.e $\mathfrak{g}=0$.
In \cite{Bryant1} several pseudo Riemannian metrics in dimension up to eleven are constructed. 
Of particular interest therein is the eleven dimensional Brinkmann space which admits exactly one parallel null spinor. 
The  space in this case has holonomy $(\mathfrak{spin}(7)\ltimes\RRR^8)\times\RRR$.

A $D$-dimensional pp-wave is locally given by a metric of the form 
\begin{equation}
g= 2 dx^-dx^+ +\sum_{i,k=1}^n h_{ik}(x^-)x^k dx^idx^- +H(x^-,x^i)(dx^-)^2+\sum_{i=1}^{n}(dx^i)^2 
\label{condition}\end{equation}
With respect to the local basis 
$e^+=dx^++\frac{1}{2}Hdx^- + \frac{1}{2}\sum_{i=1}^n h_idx^i,
e^-=dx^-,
e^k=dx^k$
 the metric  is written  $g=2e^-e^+ +\sum_{i=1}^n e^ie^i$ and the nonzero
 connection coefficients are 
\begin{equation}
\omega_{-j}=\frac{1}{2}(\partial_j H-\partial_-h_j) dx^-  -\frac{1}{2}\partial_{[i}h_{j]}dx^i \,,\quad
\omega_{ij}=-\frac{1}{2} \partial_{[i}h_{j]}dx^- .
\end{equation}
The surviving curvature and Ricci terms are 
\begin{align}
R_{-i}&=\Big( \frac{1}{4}\sum_{j=1}^n \partial_{[i}h_{j]}\partial_{[j}h_{k]} -\frac{1}{2}\partial_-\partial_{\{i}h_{k\}}  
+\frac{1}{2}\partial_i\partial_k H \Big)dx^k\wedge dx^-
\label{remaining}\\
Ric_{--}&=\frac{1}{2}\Delta H-\frac{1}{2} \partial_-{\rm div}\,h +\frac{1}{4}{\rm tr}\big[({\rm grad}\, h)^2\big]
                           -\frac{1}{4}{\rm tr}\big[({\rm grad}\, h)^T({\rm grad}\,h)\big]
\end{align} 
with ${\rm grad}\, h=\big(\partial_jh_k\big)_{jk}$.
The holonomy is explicitly given by  ${\rm span} \{\gamma^k\gamma^-\}_{1\leq k\leq n}$. The kernel of the holonomy algebra as a subalgebra of $\mathfrak{spin}(1,D-1)$ acting on the spinors is of dimension $d=2^{[\frac{D}{2}]-1}$ such that a pp-wave has exactly this amount of parallel spinors. This is seen as follows. First we note that the kernel of the holonomy is equal to the kernel of $\gamma^-$. Then we have  $(\gamma^-)^2=(\gamma^+)^2=0$, i.e.\ the minimal polyomial of the two matrices is $x^2$ which forces the kernels to be of dimension at least $d$. Furthermore we have  $\frac{1}{2}(\gamma^++\gamma^-)^2=\frac{1}{2}(\gamma^+\gamma^-+\gamma^-\gamma^+)=-1$ which gives dimension zero for the  intersection of the kernels, so that their dimension is at most  $d$.

For the special class of metrics with  $h=0$ the Killing vector fields of this metric are given by  
\begin{align*}
X_+		&=\partial_+\,,&
X_-		&=\partial_- \\
Y		&=f(x^-)\partial_k+g(x^-)x_k\partial_+\,,&
Z_{kl}	&= x^k\partial_l-x^l\partial_k
\end{align*}
with $\nabla X_+=0, (\nabla X_-)_{AB}=\delta_{k[A}\delta_{B]-}\partial^kH$ 
and $(\nabla Y)_{AB}=2\delta_{-[A}\delta_{B]k}f' $ if  $f'=-g, f\partial_kH=2x_kf''$ and 
$(\nabla Z_{kl})_{AB}=2\delta_{k[A}\delta_{B]l}$ if $x_k\partial_lH-x_l\partial_kH=0 $.

A further specialization to $H=\sum_{i=1}^n A_{ij}x^ix^j$ with $A_{ij}= -\delta_{ij}\lambda_i^2$ yields the celebrated solution 
\begin{align*}
Y_k &= \sin(\lambda_kx^-)\partial_k-\lambda_kx_k\cos(\lambda_kx^-)\partial_+,\\
Y_{k^*}&=\cos(\lambda_kx^-)\partial_k+\lambda_kx_k\sin(\lambda_kx^-)\partial_+
\end{align*}
with non vanishing commutators
\begin{align}
[Y_k,Y_{l^*}]&=2\delta_{kl}\lambda_kX_+,\quad
[X_-,Y_k]=\lambda_kY_{k^*},\quad [X_-,Y_{k^*}]=-\lambda_kY_{k}\,.
\nonumber
\end{align}
The rotational Killing vector fields are related to those rotations leaving the matrix $A_{ij}$ invariant. This invariance is only possible nontrivially in the diagonal case and  if some of the $\lambda_i$ coincide. For $A={\rm diag}\big(\lambda_1{\bf 1}_{n_1},\ldots,\lambda_r{\bf 1}_{n_r}\big)$ the full rotation algebra breaks down to $\mathfrak{so}(n_1)\oplus\cdots\oplus\mathfrak{so}(n_r)$. 

A consequence of the above discussion is the next proposition
\begin{proposition}
For two parallel spinors $\eta$ and $\xi$ on the pp-wave $M$ the commutator $[\o(\eta),\o(\xi)]$ is at most of order 2.
\end{proposition}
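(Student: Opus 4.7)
The strategy is to apply the Corollary following Theorem~\ref{oddoddcomm}, specialized to parallel spinors, and then exploit the highly constrained curvature of a pp-wave. That corollary reduces the bracket to
\[
[\o(\eta),\o(\xi)] = \{\eta,\xi\} + \mathfrak{B}^{(2)}(R;\eta,\xi),
\]
where the first summand lies in $\X(M)$ and is of order $0$ in $\Lambda S$. Everything therefore hinges on controlling the \emph{a priori} third-order term $\mathfrak{B}^{(2)}(R;\eta,\xi)=\gamma^i\eta\wedge\gamma^j\xi\owedge R_{ij}$.

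The key geometric input is twofold. First, the pp-wave curvature computed in \eqref{remaining} has only the components $R_{-i}$ (and by antisymmetry $R_{i-}=-R_{-i}$, with $i$ spatial) nonzero. Substituting this into the double sum over the tangent frame collapses it to
\[
\mathfrak{B}^{(2)}(R;\eta,\xi) = \bigl(\gamma^{-}\eta\wedge\gamma^{k}\xi - \gamma^{k}\eta\wedge\gamma^{-}\xi\bigr)\owedge R_{-k}.
\]
Second, the remark made just before the proposition asserts that the kernel of the holonomy algebra $\mathrm{span}\{\gamma^{k}\gamma^{-}\}$ acting on spinors coincides with $\ker\gamma^{-}$. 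Since parallel spinors are annihilated by every holonomy element, $\gamma^{-}\eta=\gamma^{-}\xi=0$, and both summands in the displayed expression vanish. Hence $\mathfrak{B}^{(2)}(R;\eta,\xi)=0$.

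Combining the two steps, $[\o(\eta),\o(\xi)] = \{\eta,\xi\}$, a purely vector-field contribution of order $0$ in $\Lambda S$, and in particular of order at most $2$. There is no real obstacle: once one has observed that pp-wave curvature lives entirely in those $\mathfrak{so}$-planes that involve $e_{-}$, while parallel spinors are simultaneously killed by $\gamma^{-}$, the whole argument is a one-line substitution on top of the Corollary already proved.
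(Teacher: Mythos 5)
Your proof is correct and follows essentially the same route as the paper: reduce via the corollary to Theorem~\ref{oddoddcomm} to showing $\mathfrak{B}^{(2)}(R;\eta,\xi)=0$, then observe that the only surviving curvature components of a pp-wave are $R_{-k}$ while parallel spinors lie in $\ker\gamma^-$ (the kernel of the holonomy algebra), so every term in the sum contains a factor $\gamma^-\eta$ or $\gamma^-\xi$ and vanishes. This matches the paper's one-line computation exactly.
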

\begin{proof}
We have to show that $\mathfrak{B}^{(2)}(R;\eta,\xi)$ vanishes. Therefore, we write this term in the above coordinates and get
\begin{equation}\begin{aligned}
\mathfrak{B}^{(2)}(R;\eta,\xi) &=\gamma^A\eta\wedge\gamma^B\xi\owedge R_{AB}
=  \gamma^k\eta\wedge\underbrace{\gamma^-\xi}_{=0}\owedge R_{k-}
           +\underbrace{\gamma^-\eta}_{=0}\wedge\gamma^k\xi\owedge R_{-k}
 =0\,.
\end{aligned}\end{equation}
\end{proof}
This gives the following statement for the supersymmetric Killing structure.
\begin{corollary}\label{ppSSKS}
 The  supersymmetric Killing structure on a pp-wave $M$ is given by 
\begin{equation}
\e(\mathcal{K}_0)\oplus\o(\mathcal{K}_1)
\end{equation}
where $\mathcal{K}_0$ is the space of complexified Killing vector fields and $\mathcal{K}_1$  the space of parallel spinors on $M$.
\end{corollary}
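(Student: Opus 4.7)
The plan is to verify all four bracket conditions of Definition \ref{defiSSKS} for $\mathcal{K}_0=\{\text{Killing vector fields}\}$, $\mathcal{K}_1=\{\text{parallel spinors}\}$ and $\mathfrak{Z}=0$, drawing on the general bracket formulas derived earlier together with the vanishing of $\mathfrak{B}^{(2)}(R;\eta,\xi)$ established in the preceding proposition. Since $\mathfrak{Z}$ is trivial, the three ideal conditions $[\mathcal{K},\mathfrak{Z}]\subset\mathfrak{Z}$ and $[\mathfrak{Z},\mathfrak{Z}]\subset\mathfrak{Z}$ hold automatically, so only the three nontrivial brackets need to be checked.

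First, the even-even bracket $[\e(X),\e(Y)]=\e([X,Y]_0)$ is already Theorem \ref{7}, and $\mathcal{K}_0$ is closed under the Lie bracket of vector fields, hence the bracket lies in $\e(\mathcal{K}_0)$. Second, for the mixed bracket the theorem containing \eqref{evenodd} gives $[\e(X),\o(\eta)]=\o(\e(X)\eta)$ whenever $X$ is Killing and $\eta$ parallel; the same proof shows $\e(X)\eta$ is again parallel, so the bracket lies in $\o(\mathcal{K}_1)$.

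The essential point is the odd-odd bracket. By Theorem \ref{oddoddcomm} specialized to parallel spinors one has
\begin{equation*}
[\o_+(\eta),\o_+(\xi)]=\{\eta,\xi\}+\mathfrak{B}^{(2)}(R;\eta,\xi),
\end{equation*}
and on a pp-wave the last summand vanishes by the preceding proposition. It remains to recognize the surviving vector field $V:=\{\eta,\xi\}$ as an element of $\e(\mathcal{K}_0)$. Its components $V^k=2\,C(\eta,\gamma^k\xi)$ are manifestly parallel, since the charge conjugation $C$, the Clifford multiplication $\gamma^k$ and the spinors $\eta,\xi$ are all parallel with respect to the Levi-Civita connection. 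A parallel vector field is in particular a Killing vector field, so $V\in\mathcal{K}_0$; moreover $\nabla V=0$ implies $(\nabla V)_\a=0$ and $\mathrm{tr}(\nabla V)=0$, hence $\e_\alpha(V)=V$. Thus $[\o_+(\eta),\o_+(\xi)]=\e(V)\in\e(\mathcal{K}_0)$, which closes the structure.

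The only delicate step is this last identification of $\{\eta,\xi\}$ with the image of a Killing field under $\e$; everything else is a direct citation of the previous results. Once one observes that for parallel $\eta,\xi$ the SUSY bilinear produces a parallel (and hence in particular Killing) vector field on which the skew-symmetrized gradient correction from $\e$ vanishes identically, the corollary follows.
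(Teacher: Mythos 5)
Your proof is correct and follows essentially the same route as the paper: the corollary rests on the preceding proposition (vanishing of $\mathfrak{B}^{(2)}(R;\eta,\xi)$ on a pp-wave, forced by $\gamma^-\eta=0$ for parallel spinors) together with the earlier bracket formulas of Theorem \ref{7}, equation \eqref{evenodd} and Theorem \ref{oddoddcomm}. Your explicit verification that $\{\eta,\xi\}$ is a parallel, hence Killing, vector field on which $\e_\alpha$ acts as the identity is a step the paper leaves implicit, and it is exactly the right observation needed to close the odd--odd bracket into $\e(\mathcal{K}_0)$.
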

\begin{remark}
Although pp-waves have a large number of odd Killing fields, Corollary \ref{ppSSKS} shows that the SSKS in this case is the same as in the flat case, i.e.\ the usual supersymmetry algebra. 
\end{remark}

\subsubsection{General Brinkmann spaces}

If we turn back to more general Brinkmann spaces which admit a parallel spinor we are lead to holonomy $\mathfrak{g}\ltimes\RRR^n\subset\mathfrak{spin}(n)\times\RRR^n\subset\mathfrak{spin}(1,n+1)$. The number of spinors which are mapped to zero by this algebra is half the number of the spinors mapped to zero by $\mathfrak{g}\subset\mathfrak{spin}(1,n+1)$. 
Obviously, we have $\ker(\mathfrak{g}\ltimes\RRR^n)=\ker(\mathfrak{g})\cap\ker(\RRR^n)$ and we show by an explicit discription of the $\gamma$-matrices that only one half of the kernel of $\mathfrak{g}\subset{\rm span}\{\gamma^{ij}\}$ is preserved by $\RRR^n={\rm span}\{\gamma^i\gamma^-\}$.

Consider the $\mathfrak{so}(n)$ $\gamma$-matrices $\sigma^k$ with $\{\sigma^k,\sigma^l\}=-2g^{kl}$ for $ 1\leq k\leq n$, from which we construct the $\mathfrak{so}(1,D-1)$ $\gamma$-matrices $\gamma^A$ for $0\leq A\leq n+1=D-1$.
\begin{equation}\begin{aligned}
\gamma^j 		&=\begin{pmatrix} &-i\sigma^j \\i\sigma^j& 0 \end{pmatrix} \text{ for }1\leq j\leq n,\qquad
\gamma^{n+1} 	 =\begin{pmatrix}  &i{\bf 1}\\i{\bf 1}&\end{pmatrix} \\
\gamma^0 		&=
  \begin{cases}
   \begin{pmatrix} &-i\hat \sigma \\i\hat\sigma& \end{pmatrix},
               \ \hat\sigma=(-)^{\frac{n}{4}}\sigma^1\cdots\sigma^n &\text{for $n$ even}
    \\
    \begin{pmatrix}  -{\bf 1}&\\&{\bf 1} \end{pmatrix}				&\text{for $n$ odd} 
  \end{cases}
\end{aligned}\end{equation}
Furthermore we define the light cone matrices  
\begin{equation}\begin{aligned}
\gamma^+&=\frac{1}{\sqrt{2}}(\gamma^{n+1}+\gamma^0)=
  \begin{cases}
     \frac{i}{\sqrt{2}}\begin{pmatrix} &{\bf 1}-\hat\sigma \\ {\bf 1} +\hat\sigma& \end{pmatrix} & \text{for $n$ even}
     \\
     \frac{1}{\sqrt{2}}\begin{pmatrix}-{\bf 1}&i{\bf 1}\\i{\bf 1}&{\bf 1} \end{pmatrix} &\text{for $n$ odd} 
 \end{cases} \\
\gamma^-& =\frac{1}{\sqrt{2}}(\gamma^{n+1}-\gamma^0)=
  \begin{cases}
     \frac{i}{\sqrt{2}}\begin{pmatrix} &{\bf 1}+\hat\sigma \\ {\bf 1} -\hat\sigma& \end{pmatrix} & \text{for $n$ even}
     \\
     \frac{1}{\sqrt{2}}\begin{pmatrix}  {\bf 1}&i{\bf 1}\\i{\bf 1}&-{\bf 1} \end{pmatrix} &\text{for $n$ odd} 
  \end{cases}
\end{aligned}\end{equation}
and in particular we have
\begin{equation}\begin{aligned}
\gamma^{ij} &= \begin{pmatrix} \sigma^{ij}&\\&\sigma^{ij}\end{pmatrix} \\
\gamma^{j-}	&=
\begin{cases}
	\frac{1}{\sqrt{2}}\begin{pmatrix} \sigma^j({\bf 1}-\hat\sigma)&\\&-\sigma^j({\bf 1}+\hat\sigma)\end{pmatrix} 
		&\text{for $n$ even}\\
	\frac{1}{\sqrt{2}}\begin{pmatrix} \sigma^j& i\sigma^j\\i\sigma^j&\sigma^j\end{pmatrix} 
		&\text{for $n$ odd}\\
\end{cases}
\end{aligned}\end{equation}
The $D$-spinors are constructed from the $n$-spinors by 
\begin{equation}
S_D=S_n \otimes S_2 =S_n \oplus S_n \stackrel{\text{$n$ even}}{=} (S_n^+\oplus S_n^-)\oplus(S_n^+\oplus S_n^-),
\label{double}
\end{equation}
(see e.g.\ \cite{Klinker1}). With this decomposition and recalling that ${\bf 1}\pm \hat\sigma$ is two times the projection on $S_n^\pm\subset S_n$ we get the following kernels for $\gamma^\pm$:
\[
\begin{array}{lr}
\displaystyle{\rm ker}\,\gamma^- =&\displaystyle  \left( S_n^+\oplus \{0\}\right)\oplus\left(\{0\}\oplus S_n^-\right)\\
\displaystyle{\rm ker}\,\gamma^+=&\displaystyle  \left(\{0\}\oplus S_n^-\right)\oplus\left(S_n^+\oplus \{0\}\right)
\end{array}
\qquad\text{for $n$ even}
\]
and
\[
\begin{array}{lr}
\displaystyle{\rm ker}\,\gamma^-&  = \displaystyle\left\{(\phi,-i\phi)\in S_n \oplus S_n\right\}\\
\displaystyle{\rm ker}\,\gamma^+& = \displaystyle \left\{(\phi,i\phi)\in S_n\oplus S_n\right\}
\end{array}
\qquad \text{for $n$ odd}
\]
This explicit discription of the kernel of $\gamma^-$ shows that only one half of the kernel of $\mathfrak{g}\subset \mathfrak{spin}(1,n+1)$ is annihilated. Because of \eqref{double} the latter is two times the kernel of $\mathfrak{g}\subset\mathfrak{spin}(n)$ and the null space of the holonomy algebra is isomorphic to one of them. 

Recalling the supersymmetric Killing structure for  manifolds with holonomy $SU(n)$ or  $Spin(7)$ from Sections \ref{SecSUn} and \ref{SecSpin7} and for pp waves from Corollary \ref{ppSSKS} we get the following result for more general Brinkmann spaces.
\begin{theorem}
For all $D$-dimensional Brinkmann spaces with holonomy from the list in Table \ref{brinkmanntable}
 the finite supersymmetric Killing structure is given by 
\[
\mathcal{K}_0\oplus\mathfrak{Z}\oplus \mathcal{K}_1.
\]
The central part is determined by the part $R_\mathfrak{g}$ of the curvature coming from $\mathfrak{g}\subset \mathfrak{spin}(n)\subset\mathfrak{spin}(1,D-1)$ to
\[
\mathfrak{B}^{(2)}(R_{\mathfrak{g}};\eta,\xi).
\]
\end{theorem}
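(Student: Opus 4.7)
The plan is to combine two ingredients already available in the excerpt: the pp-wave calculation (Corollary \ref{ppSSKS}), which handles the nilpotent $\RRR^n$-part of the holonomy, and the Riemannian analysis of Theorems \ref{generalSUfall}, \ref{generalSUfallC} and the $Spin(7)$ and $G_2$ discussion, which handles the reductive $\mathfrak{g}$-part. First I would use the explicit $\gamma$-matrix description preceding the theorem to identify the parallel spinors on $M$: they lie in $\ker(\mathfrak{g})\cap\ker(\RRR^n)=\ker(\mathfrak{g})\cap\ker(\gamma^-)$ and, via the splitting \eqref{double}, are in natural bijection with the parallel spinors of the Riemannian model of $\mathfrak{g}\subset\mathfrak{spin}(n)$. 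In particular $\gamma^-\eta=0$ holds for every $\eta\in\mathcal{K}_1$.

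For parallel $\eta,\xi$, Theorem \ref{oddoddcomm} reduces the odd-odd bracket to
\[
[\o(\eta),\o(\xi)] = \{\eta,\xi\} + \mathfrak{B}^{(2)}(R;\eta,\xi),
\]
and the standard square $\{\eta,\xi\}$ is a Killing vector field on $M$, which is absorbed into $\e(\mathcal{K}_0)$. The essential step is therefore the analysis of $\mathfrak{B}^{(2)}(R;\eta,\xi)$. I would decompose the curvature along the holonomy splitting,
\[
R_{CD} = (R_\mathfrak{g})_{CD} + (R_{\RRR^n})_{CD}, \qquad (R_{\RRR^n})_{CD} = \tfrac{1}{2}R_{CDk-}\gamma^{k-},
\]
with $\gamma^{k-}=\gamma^k\gamma^-$. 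The pp-wave computation of Corollary \ref{ppSSKS} then generalizes: since $\partial_+$ is parallel, $R_{ABCD}$ vanishes whenever any index equals $+$, so in $\gamma^C\eta\wedge\gamma^D\xi\owedge(R_{\RRR^n})_{CD}$ both $C$ and $D$ must be spatial; the Clifford identity $\gamma^k\gamma^-=-\gamma^-\gamma^k$ together with $\gamma^-\eta=\gamma^-\xi=0$, the first Bianchi identity on $R_{jlk-}$, and the holonomy constraint that $R_{jlk-}\gamma^{jl}$ already lies in $\mathfrak{g}$, collectively force $\mathfrak{B}^{(2)}(R_{\RRR^n};\eta,\xi)=0$. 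This algebraic vanishing is the main obstacle in the proof.

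Once the reduction $\mathfrak{B}^{(2)}(R;\eta,\xi)=\mathfrak{B}^{(2)}(R_\mathfrak{g};\eta,\xi)$ is established, the center $\mathfrak{Z}$ is spanned by these $\mathfrak{g}$-valued third-order fields and the remaining bracket relations close by the arguments already given in the Riemannian case. Proposition \ref{thirdordercomm} controls $[\e(X),\mathfrak{B}^{(2)}(R_\mathfrak{g};\eta,\xi)]$; the argument of Corollary \ref{easy}, combined with $R_{\mathfrak{g},CD}\eta=0$ for every parallel spinor, controls $[\o(\varphi),\mathfrak{B}^{(2)}(R_\mathfrak{g};\eta,\xi)]$; and the identity \eqref{26} controls the bracket of two third-order fields, which vanishes by the same representation-theoretic argument used in the $SU(n)$ and $Spin(7)$ cases.

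Finally, finiteness is inherited from the Riemannian special-holonomy cases: the identification of parallel spinors via \eqref{double}, combined with the splitting $\mathfrak{g}\ltimes\RRR^n\subset\mathfrak{spin}(1,D-1)$, pulls back the resulting SSKS onto the finite SSKS of Theorems \ref{generalSUfall}, \ref{generalSUfallC} or of the corresponding $Spin(7)$ and $G_2$ constructions. The only genuinely new ingredient, and the step I expect to require the most care, is the vanishing of the $\RRR^n$-part of the third-order curvature field described above.
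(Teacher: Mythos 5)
Your overall strategy is the one the paper itself relies on: identify $\mathcal{K}_1$ with $\ker\mathfrak{g}\cap\ker\gamma^-$ via the explicit $\gamma$-matrix splitting \eqref{double}, kill the pp-wave part of the curvature by the mechanism of Corollary \ref{ppSSKS}, and import the special-holonomy analysis for the rest; the paper offers essentially no more detail than this. The burden therefore falls entirely on the step you yourself single out, and it is exactly there that your argument does not close. Decomposing $R_{AB}=(R_\mathfrak{g})_{AB}+(R_{\RRR^n})_{AB}$, the identity $\gamma^-\eta=\gamma^-\xi=0$ only removes those terms of $\mathfrak{B}^{(2)}(R;\eta,\xi)=\gamma^A\eta\wedge\gamma^B\xi\owedge R_{AB}$ in which a \emph{form} index equals $-$; since no index can be $+$, what survives is $\gamma^k\eta\wedge\gamma^l\xi\owedge R_{kl}$ with $k,l$ spatial, and $R_{kl}$ still contains an $\RRR^n$-valued piece $\tfrac{1}{2}R_{klm-}\gamma^m\gamma^-$ coming from the mixed component $\mathfrak{g}\odot\RRR^n$ of the curvature (by pair symmetry $R_{klm-}=R_{m-kl}$, and this component is generically nonzero for the type II and IV holonomies of Table \ref{brinkmanntable}). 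Your appeal to the first Bianchi identity and to the holonomy constraint $R_{m-kl}\gamma^{kl}\in\mathfrak{g}$ does not make $\gamma^k\eta\wedge\gamma^l\xi\owedge R_{klm-}\gamma^m\gamma^-$ vanish: for fixed $m$ the contraction $R_{klm-}\,\gamma^k\eta\wedge\gamma^l\xi$, with $R_{klm-}$ a two-form lying in $\mathfrak{g}\subset\Lambda^2\RRR^n$ and $\eta,\xi$ annihilated by $\mathfrak{g}$, is of exactly the same algebraic type as $\mathfrak{B}^{(2)}(R;\eta,\eta^C)$ in the K\"ahler case, which is precisely the \emph{non}-vanishing central element there. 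So either one proves $R_{klm-}=0$ for the listed holonomies (false in general), or these $\RRR^n$-valued third-order fields must be admitted into $\mathfrak{Z}$; the reduction of the center to $\mathfrak{B}^{(2)}(R_\mathfrak{g};\eta,\xi)$ is not established by the argument you give.

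A secondary point: the closure and finiteness claims are also quicker than the Riemannian case permits. Corollary \ref{easy} requires all three spinorial entries to coincide; in Theorem \ref{generalSUfall} the bracket $[\o(\eta_1),\mathcal{Z}]=0$ was obtained only after the identity $\mathcal{Z}=\mathfrak{B}^{(2)}(R;\eta_1,\eta_1)$ reduced the mixed central element to one with equal entries. A Brinkmann space with screen holonomy $\mathfrak{g}$ has $\nabla R\neq 0$ in general and a larger supply of parallel spinors than a single pure pair, so the vanishing of $\mathfrak{B}^{(3)}(\nabla R;\varphi,\eta,\xi)$ and of the $\mathfrak{D}^{(3)}$-term in \eqref{thirdorderandodd} --- which is what finiteness hinges on --- needs its own argument and is not simply inherited from Theorems \ref{generalSUfall} and \ref{generalSUfallC}.
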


\begin{table}[htb]\caption{Brinkmann spaces with finite SSKS}\label{brinkmanntable}
\renewcommand{\arraystretch}{1.9}
\[
\begin{array}{c|c|c}
D       & \mathfrak{hol} & \text{Type} \\\hline\hline
2m+1 & \mathfrak{su}(m)\ltimes\RRR^n & \rm II \\\hline
10      & \mathfrak{spin}(7)\ltimes\RRR^8 & \rm II \\\hline
9   & \mathfrak{g}_2\ltimes\RRR^7  & \rm II \\\hline
n+2   &  \RRR^n & \rm II \\\hline
11 & (\mathfrak{spin}(7)\ltimes\RRR^8)\times\RRR &  \rm IV \\\hline
10 & (\mathfrak{g}_2\ltimes\RRR^7)\times\RRR & \rm IV\\
\end{array}
\]
\end{table}


\section{The central part of the SSKS}\label{secCenter}

Up to now the supersymmetric Killing structures we discussed were finite --- in particular they were spanned by one element. As we saw in Section \ref{Secthirdorder} we can not expect finiteness in general. 
Therefore, we will  come back to the investigation of the central part $\mathfrak{Z}$ of the supersymmetric Killing structure in this section and we will show that  although the center is not finite it has a uniform shape. More precisely, it turns out that there are at most two types of elements occurring in the central part. These two types of elements depend, just as demanded, multilinearly on the spinorial entries and, furthermore, they depend on the curvature. We stress that there is no need to distinguish between real and complex SSKS with respect to the general structure of the center. Therefore we will not explicitly  mention if the vector fields on the reduced manifold have to be taken complexified.

Recalling the results of Section \ref{Secthirdorder} will help us to describe the central part $\mathfrak{Z}$ of the supersymmetric Killing structure in general. In the section mentioned above we calculated the commutators between  even and odd Killing fields and a third order term as well as the commutator between two third order terms. The results were 
\begin{align*}
\left[\e(X),\mathfrak{B}^{(2)}(R;\varphi,\psi)\right]
&= \mathfrak{B}^{(2)}(R;\e(X)\varphi,\psi))+\mathfrak{B}^{(2)}(R;\e(X)\psi,\varphi))
\tag{\ref{thirdorderandeven}}\\
\left[\o(\varphi),\mathfrak{B}^{(2)}(R;\eta,\xi)\right]
&= \mathfrak{B}^{(3)}(\nabla R;\varphi,\eta,\xi)
+R_{klj}{}^m\, \gamma^k\eta\wedge\gamma^l\xi\wedge\gamma^j\varphi\otimes\nabla_m 
\tag{\ref{thirdorderandodd}}\\
\left[\mathfrak{B}^{(2)}(R;\varphi,\psi),\mathfrak{B}^{(2)}(R;\xi,\eta)\right]
&= \mathfrak{B}^{(4)}(\widetilde R;\varphi,\psi,\xi,\eta)
\tag{\ref{26}}
\end{align*}
with $\tilde R$ defined by 
\begin{align*}
\widetilde{R}(X,Y,U,V)  =\ & \left[R(X,Y),R(U,V)\right] -  R(R(X,Y)U,V)  -R(U,R(X,Y)V)\\
& + R(R(U,V)X,Y)+R(X,R(U,V)Y). \tag{\ref{27}}
\end{align*}
and denoted by  $R \bullet R$, see Definition \ref{operations} below.

At this stage we see that at least  two different types of elements occur in the center $\mathfrak{Z}$. One is of the form  
\begin{multline}
\mathfrak{B}^{(k)}(A;\xi_1,\ldots,\xi_k)
=\ \gamma^{i_1}\xi_1\wedge\ldots\wedge\gamma^{i_k}\xi_k\owedge  A_{i_1\cdots i_k}
\in \Lambda^{k+1}\Gamma S\otimes \Gamma S\subset \X(\hat M)_{k\,{\rm mod}\,2} \label{Be}
\end{multline}
where  $A$ is section in $\otimes^k TM\otimes {\rm End}(TM)$ which, when evaluated on $k$ vectors, takes it values in the holonomy algebra of $M$, see Definition \ref{E-endomorphism}.

The second type of element, we will be confronted with, is of the form
\begin{multline}
\mathfrak{D}^{(k+1)}(A;\xi_1,\ldots,\xi_{k+1})
=A_{i_1\cdots i_{k+1} }{}^j \gamma^{i_1}\xi_1\wedge\ldots\wedge\gamma^{i_{k+1}}\xi_{k+1}\otimes\nabla_j\\
\in\Lambda^{k+1}S\otimes \X(M)\subset \X(\hat M)_{k+1\,{\rm mod}\,2} \label{De}
\end{multline}
where $A$ is of the same form as before. 

\begin{remark}
The second summand in \eqref{thirdorderandodd} is of this type: $\mathfrak{D}^{(3)}(R;\varphi,\eta,\xi)$
\end{remark}

We summarize the result for general supersymmetric Killing structures in the next proposition.
In its formulation we need various operations on and between endomorphism valued tensors. We will use intuitive notations that we list below.
\begin{definition}\label{operations}
\begin{align*}
\text{(a)}&&& \displaystyle   (\nabla A)_{i_0i_1\cdots i_k} = (\nabla_{i_0}A)_{i_1\cdots i_k} \\
\text{(b)} &&& \displaystyle(A\rfloor^{1} R)_{mi_1\cdots i_k}
   =  A_{i_1\cdots i_k}{}^nR_{nm}\\
\text{(c)} &&& \displaystyle(A\bullet B)_{i_1\cdots i_kj_1\cdots j_l}
  =  \big[A_{i_1\cdots i_k},B_{j_1\cdots j_l}\big] 
            - \sum_{\beta=1}^l A_{i_1\cdots i_kj_\beta}{}^m B_{j_1\cdots m\cdots  j_l} 
            +\sum_{\alpha=1}^kB_{j_1\cdots j_li_\alpha}{}^m A_{i_1\cdots m\cdots i_k}\\
\text{(d)}  &&&\displaystyle(A\rfloor^1B\rfloor^2 R)_{i_1\cdots i_kj_1\cdots j_l} 
  = A_{i_1\cdots i_k}{}^mB_{j_1\cdots j_l}{}^n R_{mn}\\
\text{(e)} &&&\displaystyle(\nabla_AB)_{i_1\cdots i_kj_1\cdots j_l}
 =  A_{i_1\cdots i_k}{}^m\nabla_mB_{j_1\cdots j_l}\\
\text{(f)} &&&\displaystyle(A\circ B)_{i_1\cdots i_kj_1\cdots j_l}{}^m 
  = \sum_{\alpha=1}^k B_{j_1\cdots j_li_\alpha}{}^n A_{i_1\cdots n\cdots i_k}{}^m
\end{align*}
\end{definition}

\begin{theorem} \label{evolution}
Let $\hat M$ be a special graded manifold over the (pseudo) Riemannian manifold $M$. 
Consider the maps $\e:\X(M)\to\X(\hat M)_0$ and $\o:\Gamma S\to \X(\hat M)$ from \eqref{evenansatz} and \eqref{oddansatz1} and their restrictions to $\mathcal{K}_0=\{\text{Killing vector fields on }M\}$ and $\mathcal{K}_1=\{\text{Parallel spinors on }M\}$ respective. 
These data give rise to a supersymmetric Killing structure with central part $\mathfrak{Z}$ which can be  described uniformly by two types of elements. 

The first one, denoted by $\mathfrak{B}$, is s-like  and of the form \eqref{Be}. The second one, denoted by $\mathfrak{D}$, is v-like and of the form \eqref{De}. The dependence of $\mathfrak{B}$ and $\mathfrak{D}$ on the spinorial entries  is multilinear. The further entry is an endomorphisms which is recursively given by the curvature endomorphism via the operations given  in Definition \ref{operations}. 

Beside the usual brackets the remainig ones are
\begin{align*}
{(1)}&&& 
\big[\e(X),\mathfrak{B}^{(k)}(A;\xi_1,\ldots,\xi_k)\big]
 = \sum_{\alpha=1}^k \mathfrak{B}^{(k)}(A;\xi_1,\ldots,\e(X)\xi_\alpha,\ldots,\xi_k)
\\
{(2)}&&&
\big[\e(X),\mathfrak{D}^{(k)}(A;\xi_1,\ldots,\xi_k)\big]
=  \sum_{\alpha=1}^{k}\mathfrak{D}^{(k)}(A;\xi_1,\ldots,\e(X)\xi_\alpha,\dots,\xi_k)\big]
\\
{(3)}&&& 
\big[\o(\varphi),\mathfrak{B}^{(k)}(A;\xi_1,\ldots,\xi_k)\big] 
	=    \mathfrak{B}^{(k+1)}(\nabla A;\varphi,\xi_1,\ldots,\xi_k) 
	+(-)^k\mathfrak{D}^{(k+1)}(A ;\xi_1,\ldots,\xi_k,\varphi)
\\
{(4)}&&&
\big[\o(\varphi),\mathfrak{D}^{(k)}(A;\xi_1,\ldots,\xi_k)\big] 
	=  \mathfrak{D}^{(k+1)}(\nabla A;\varphi,\xi_1,\ldots,\xi_k) 
	  +(-)^k\mathfrak{B}^{(k+1)}(A\rfloor^1 R;\xi_1,\ldots,\xi_k,\varphi)
\\
{(5)}&&& 
\big[\mathfrak{B}^{(k)}(A;\xi_1,\ldots,\xi_k),\mathfrak{B}^{(l)}(B;\eta_1,\ldots,\eta_l)\big]
 =  \mathfrak{B}^{(k+l)}(A\bullet B;\xi_1,\ldots,\xi_k,\eta_1,\ldots,\eta_l)
\\
{(6)}&&& 
\big[\mathfrak{D}^{(k)}(A;\xi_1,\ldots,\xi_k),\mathfrak{D}^{(l)}(B;\eta_1,\ldots,\eta_l)\big]   
 = \mathfrak{B}^{(k+l)}(A\rfloor^1 B\rfloor^2 R;\xi_1,\ldots,\xi_k,\eta_1,\ldots,\eta_l) \\
&&& \hspace*{18em}   +\mathfrak{D}^{(k+l)}(\nabla_AB-\nabla_BA;\xi_1,\ldots,\xi_k,\eta_1,\ldots,\eta_l)
\\
{(7)}&&&
\big[\mathfrak{D}^{(k)}(A;\xi_1,\ldots,\xi_k),\mathfrak{B}^{(l)}(B;\eta_1,\ldots,\eta_l)\big] 
 = \mathfrak{B}^{(k+l)}(\nabla_AB;\xi_1,\ldots,\xi_k,\eta_1,\ldots,\eta_l)\\
&&& \hspace*{18em}+ \mathfrak{D}^{(k+l)}(A\circ B;\xi_1,\ldots,\xi_k,\eta_1,\ldots,\eta_l)
\end{align*}
We will denote this symbolically by 
 \begin{align*}
 {(1)}\ \ &  [\e,\mathfrak{B}]=\mathfrak{B},  &
 {(2)}\ \  &  [\e,\mathfrak{D}]=\mathfrak{D},&&\\ 
 {(3)}\ \ &  [\o,\mathfrak{B}]=\mathfrak{B}+\mathfrak{D}, &
 {(4)}\ \ &  [\o,\mathfrak{D}]=\mathfrak{B}+\mathfrak{D},&&\\
 {(5)}\ \ &  [\mathfrak{B},\mathfrak{B}]=\mathfrak{B},& 
 {(6)}\ \ &  [\mathfrak{D},\mathfrak{D}]=\mathfrak{B}+\mathfrak{D},&
 {(7)}\ \ & [\mathfrak{B},\mathfrak{D}]=\mathfrak{B}+\mathfrak{D}. 
 \end{align*}
\end{theorem}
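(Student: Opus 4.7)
The plan is to prove the seven bracket identities in parallel, treating Section \ref{Secthirdorder} as the base case and using the operations collected in Definition \ref{operations} to iterate. The structural observation that drives everything is that each right hand side is again of the form $\mathfrak{B}^{(\ast)}$ or $\mathfrak{D}^{(\ast)}$ whose endomorphism argument is obtained from the previous one by one of the operations of Definition \ref{operations}. Hence the set of endomorphisms that can ever appear in $\mathfrak{Z}$ is the smallest family that contains $R$ and is closed under $\nabla$, $\rfloor^1$, $\bullet$, $\nabla_A$ and $\circ$, and formulas (1)--(7) express closure of $\mathfrak{Z}$ as a module, which is exactly what Definition \ref{defiSSKS} requires.

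For (1) and (2) I would promote the argument from Proposition \ref{thirdordercomm} to arbitrary endomorphisms built from $R$. Since $\e(X)=\nabla_X-\nabla X$ and $X$ is Killing, the cross-terms of the form $[\nabla X,\gamma^{i_\alpha}]\xi_\alpha\owedge A_{\cdots i_\alpha\cdots}$ combine with $\nabla_XA$ and $[\nabla X,A]$ in such a way that, by Lemma \ref{lemma1}(3) and the covariance of $A$ under the $\mathfrak{spin}$-action, only the Leibniz sum $\sum_\alpha\mathfrak{B}^{(k)}(A;\ldots,\e(X)\xi_\alpha,\ldots)$ survives. For $\mathfrak{D}^{(k)}$ the same mechanism applies, additionally using $[X,\nabla_j]=\nabla_{[X,e_j]_0}-R(X,e_j)$ to absorb the curvature contribution coming from the $\nabla_j$ slot.

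For (3) and (4) I would decompose $\o(\varphi)=\varphi+\imath(\varphi)$ and bracket each summand separately, following the template of the proof of Proposition \ref{thirdordercomm}. The $\varphi$-part acts as an anti-derivation on the wedge product and produces contractions $\{\varphi,\xi_\alpha\}$ against the endomorphism slot; by the parallelism of $\varphi$ and $\xi_\alpha$ together with the Bianchi identity and the identity $R_{ij}\varphi=0$, these contributions vanish exactly as in Corollary \ref{easy}. The $\imath(\varphi)=\gamma^m\varphi\otimes\nabla_m$-part differentiates $A$, giving $\mathfrak{B}^{(k+1)}(\nabla A;\varphi,\xi_1,\ldots,\xi_k)$, and then commutes past the factors $\gamma^{i_\alpha}\xi_\alpha$ with a Koszul sign $(-)^k$; what remains in case (3) is the $\mathfrak{D}^{(k+1)}$-term, and in case (4) the commutator of $\nabla_m$ with the internal $\nabla_j$ of $\mathfrak{D}$ produces the curvature contraction $A\rfloor^1 R$, yielding the $\mathfrak{B}^{(k+1)}$-term.

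For (5), (6), (7) I would evaluate the super-bracket directly using Proposition \ref{basiccomm} and the atomic identities of Lemma \ref{9}, exploiting that all spinor entries are parallel. For two s-like $\mathfrak{B}$-elements only the graded commutator of the endomorphisms and the two reciprocal contractions $A_{\cdots j_\beta}{}^m B_{\cdots m\cdots}$ and $B_{\cdots i_\alpha}{}^m A_{\cdots m\cdots}$ survive, which is exactly $A\bullet B$. For two v-like $\mathfrak{D}$-elements the commutator $[\nabla_m,\nabla_n]=R_{mn}$ produces the $\mathfrak{B}$-piece with endomorphism $A\rfloor^1B\rfloor^2R$, while the $\nabla_m$-derivative of the endomorphism inside the second factor (minus its mirror) gives $\nabla_AB-\nabla_BA$; the mixed case (7) produces both $\nabla_AB$ and the composition $A\circ B$. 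The principal obstacle I anticipate is not any single identity but the systematic Koszul-sign and combinatorial bookkeeping: matching, slot by slot, the explicit sums in Definition \ref{operations}(c) and (f) with the output of the Leibniz expansion over all $\xi_\alpha$ and $\eta_\beta$. I would carry this out symbolically in an orthonormal frame, using throughout that parallelism of the spinors kills any term containing an undifferentiated $\nabla\xi_\alpha$ and that $R_{ij}\xi_\alpha=0$ eliminates the pointwise Clifford action of the curvature on the spinor entries.
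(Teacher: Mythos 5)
Your plan follows the paper's own route: parts (1)--(2) rest on the invariance of the derived endomorphisms under infinitesimal isometries (which the paper isolates as an explicit lemma stating that the operations of Definition \ref{operations} preserve this invariance), and parts (3)--(7) are frame computations extending Section \ref{Secthirdorder}, where the vanishing of the zero-order and Clifford-action contributions is governed by Remark \ref{holbem} (the endomorphisms are holonomy-valued, hence annihilate parallel spinors, and their contractions with the parallel vectors $\{\varphi,\xi_\alpha\}$ vanish) rather than by Corollary \ref{easy} or the Bianchi identity as you suggest. Apart from that misattribution --- harmless, since you also invoke $R_{ij}\varphi=0$ and parallelism, which are the operative facts --- the proposal is correct and essentially identical to the paper's proof.
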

From the property that all involved endomorphisms are build up via the curvature endomorphism we immediately get the following remark. It is intensively used in the proof of Theorem \ref{evolution}.
\begin{remark}\label{holbem}
The endomorphisms which occur as entries in the elements of the center take their values in the holonomy subbundle of ${\rm End}TM$.
\end{remark}
Another property which is almost immediate is the next special example.
\begin{corollary}\label{symmetricspace}
Let $\hat M$ be a special graded manifold whose reduction $M$ is a Riemannian symmetric space. Then the brackets reduce to 
\begin{align*}
{(3)}\ \ &\big[\o,\mathfrak{B}\big]=\mathfrak{D},&
{(4)}\ \ &\big[\o,\mathfrak{D}\big]=\mathfrak{B},&&\\
{(5)}\ \ &\big[\mathfrak{B},\mathfrak{B}\big]=\mathfrak{B},&
{(6)}\ \ &\big[\mathfrak{D},\mathfrak{D}\big]=\mathfrak{B},&  
{(7)}\ \ &\big[\mathfrak{B},\mathfrak{D}\big]=\mathfrak{D}\,.
\end{align*}
From these relations we read that the central part admits a further $\ZZZ_2$-grading, beside the one provided by $\X(\hat M)$. It is given by the two different  types of elements and not compatible with the standard $\ZZZ_2$-grading.
\end{corollary}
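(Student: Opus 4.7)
The plan is to exploit the characterization of a Riemannian symmetric space by $\nabla R=0$ and deduce from it that every endomorphism-valued tensor $A$ arising as a non-spinorial entry of a central generator is parallel. Since all elements of $\mathfrak{Z}$ are built recursively from the initial generator $\mathfrak{B}^{(2)}(R;\varphi,\psi)$ by iterating the brackets (3)--(7) of Theorem~\ref{evolution}, it suffices to check that the property $\nabla A=0$ is preserved under each operation of Definition~\ref{operations} that occurs in those brackets.

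For the base case $A=R$ the claim holds by hypothesis. For the induction step, operations (b), (c), (d), and (f) are purely algebraic composites of endomorphism-valued tensors, so the Leibniz rule combined with $\nabla g=0$, $\nabla R=0$, and the inductive hypothesis $\nabla A=\nabla B=0$ shows that their outputs are again parallel. Operation (a) sends a parallel $A$ to $0$, and operation (e), $\nabla_A B=A^m\nabla_m B$, vanishes whenever $B$ is parallel. Thus no non-parallel tensor is ever produced by the recursion, and by Remark~\ref{holbem} each such $A$ takes values in the holonomy bundle.

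Given this, the announced reduction is obtained by inspection of the brackets of Theorem~\ref{evolution}. In (3) the summand $\mathfrak{B}^{(k+1)}(\nabla A;\ldots)$ disappears, leaving only the $\mathfrak{D}$ term; in (4) the summand $\mathfrak{D}^{(k+1)}(\nabla A;\ldots)$ disappears, leaving only the $\mathfrak{B}$ term; in (6) both $\nabla_A B$ and $\nabla_B A$ vanish, so only the $\mathfrak{B}$ term survives; and in (7) $\nabla_A B=0$, so only the $\mathfrak{D}$ term survives. Bracket (5) is already purely algebraic and retains its $\mathfrak{B}$ form. The additional $\ZZZ_2$-grading is then read off the reduced table by assigning parity $0$ to the $\mathfrak{B}$-generators and parity $1$ to the $\mathfrak{D}$-generators (with $\e$ even and $\o$ odd in this new grading); its incompatibility with the standard $\ZZZ_2$-grading on $\X(\hat M)$ follows from the fact that, for instance, $\mathfrak{B}^{(k)}$ has standard parity $k\bmod 2$, which depends on $k$, whereas its new parity is uniformly $0$.

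The main obstacle is really only bookkeeping: verifying that operations (b), (c), (d), (f) commute with $\nabla$. This is routine, since the Levi-Civita connection acts as a derivation on all tensorial contractions, so each $\nabla$ applied to a recursively built $A$ expands into a sum of terms each containing a factor $\nabla R$ or $\nabla A'$ for some earlier stage, all of which vanish by the hypothesis $\nabla R=0$ together with the inductive assumption.
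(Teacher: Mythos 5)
Your proof is correct and follows essentially the same route as the paper, which simply notes that $\nabla R=0$ on a symmetric space and appeals to the evolution of the endomorphisms in Theorem \ref{evolution}; your explicit induction showing that the operations (b), (c), (d), (f) of Definition \ref{operations} preserve parallelism, so that every $\nabla A$ and $\nabla_A B$ term drops out, is exactly the detail the paper leaves implicit. The identification of the vanishing summands in brackets (3), (4), (6), (7) and the reading-off of the new $\ZZZ_2$-grading are likewise as in the paper.
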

\begin{proof}\,[Theorem \ref{evolution}]
The proof of \eqref{thirdorderandeven} made  use of the fact that the curvature endomorphism is invariant under infinitesimal isometries of the base manifold $M$. Similar calculations  show part {\em 1.} and {\em 2.} of Theorem \ref{evolution} by using the following lemma.

\begin{lemma}
Let  $A\in\Gamma\big(\otimes^k TM\,\otimes \,{\rm End} TM\big)$ 
and $B\in\Gamma(\otimes^l TM\,\otimes\,{\rm End}TM\big)$ be invariant under  infinitesimal isometries of the 
(pseudo) Riemannian manifold $M$. Then the endomorphisms from the list in Definition \ref{operations} are also  invariant 
under infinitesimal automorphisms. 
\end{lemma}

\begin{proof}
We only have to prove the statement for {(a)},  {(b)}, and for the first summand in {(c)}, because it turns out  that the property holds for each summand in {(c)} separately.  Therefore, let $X$ be a Killing vector field on the base manifold $M$. Then we get
\begin{align*}
(\mathcal{L}_X \nabla A)(Y)&=(\mathcal{L}_X\nabla_YA-(\nabla A)([X,Y]))=\\
   &=[\mathcal{L}_X,\nabla_Y]A+\nabla_Y\underbrace{\mathcal{L}_X A}_{=0}-\nabla_{[X,Y]}A 
   \stackrel{\eqref{LieCov}}{=} 0
\end{align*}
The result for {(b)} and the bracket in {(c)} is an easy  consequence of  the Leibniz rule and $A\rfloor^{1}B(X_1,\ldots,X_k,Y_1,\ldots,Y_l)=B(A(X_1,\ldots,X_k)Y_1,Y_2,\ldots,Y_l)$. 
\end{proof}

The remaining calculations  {(3)} to {(7)}\ in theorem \ref{evolution} are extended versions of the calculations for \eqref{thirdorderandodd} and \eqref{26} given in Section \ref{Secthirdorder}. A key property is given in Remark \ref{holbem}. This gives zero for the action of the endomorphisms on the parallel spinors.
We will perform these calculations  at two examples, namely part  {(6)} and {(7)}
\begin{align*}
&\big[\mathfrak{D}^{(k)}(A;\xi_1,\ldots,\xi_k),\mathfrak{D}^{(l)}(B;\eta_1,\ldots,\eta_l)\big] \\
=\ &  \big[A_{(i)}{}^n\gamma^{(i)}\xi\otimes\nabla_n, B_{(j)}{}^m\gamma^{(j)}\eta\otimes\nabla_m\big]\\
=\ &  \gamma^{(i)}\xi\wedge \gamma^{(j)}\eta\otimes A_{(i)}{}^n\nabla_nB_{(j)}{}^m\nabla_m 
  +\gamma^{(i)}\xi\wedge\gamma^{(j)}\eta\otimes A_{(i)}{}^nB(e_{j_1},\ldots, e_{j_l},\nabla_ne^m)\nabla_m \\
&  +\gamma^{(i)}\xi\wedge\gamma^{(j)}\eta\owedge A_{(i)}{}^nB_{(j)}{}^m(R_{nm}+\nabla_{[e_n,e_m]}) 
 -(-)^{k+l}\gamma^{(j)}\eta\wedge\gamma^{(i)}\xi \otimes B_{(j)}{}^m\nabla_mA_{(i)}{}^n\nabla_n \\
&  -(-)^{k+l}\gamma^{(j)}\eta\wedge\gamma^{(i)}\xi \otimes B_{(j)}{}^nA(e_{i_1},\ldots, e_{i_k},\nabla_me^n)\nabla_n \\
=\ &  \gamma^{(i)}\xi\wedge\gamma^{(j)}\eta\owedge A_{(i)}{}^nB_{(j)}{}^m R_{nm}
  +\big( A_{(i)}{}^n\nabla_nB_{(j)}{}^m-B_{(j)}{}^n\nabla_nA_{(i)}{}^m \big)
     \gamma^{(i)}\xi\wedge \gamma^{(j)}\eta\otimes \nabla_m \\
=\ &   \mathfrak{B}^{(k+l)}(A\rfloor^1 B\rfloor^2 R;\xi_1,\ldots,\xi_k,\eta_1,\ldots,\eta_l)
 +\mathfrak{D}^{(k+l)}(\nabla_AB-\nabla_BA;\xi_1,\ldots,\xi_k,\eta_1,\ldots,\eta_l)
\end{align*}
and 
\begin{align*}
&\big[\mathfrak{D}^{(k)}(A;\xi_1,\ldots,\xi_k),\mathfrak{B}^{(l)}(B;\eta_1,\ldots,\eta_l)\big]\\
=\ & \big[A_{(j)}{}^m\gamma^{(j)}\xi\otimes\nabla_m,\gamma^{(i)}\eta\otimes B_{(i)}\big]\\
=\ &
\sum\nolimits_{\alpha} \gamma^{(j)}\xi \wedge\gamma^{i_1}\eta_1\wedge\cdots
\wedge(\nabla_m\gamma^{i_\alpha})\eta_\alpha\wedge\cdots\wedge\gamma^{i_l}\eta_l\owedge A_{(j)}{}^m B_{(i)}\\
&+ \gamma^{(j)}\xi\wedge\gamma^{(i)}\eta\owedge A_{(j)}{}^m [\nabla_m,B_{(i)}]  \\
&-(-)^{k+l}{\textstyle\sum_{\beta}} \gamma^{(i)}\eta \wedge\gamma^{j_1}\xi_1\wedge\cdots\wedge[B_{(i)},\gamma^{j_\beta}]\xi_\beta
\wedge\cdots\wedge\gamma^{j_k}\xi_k\otimes A_{(j)}{}^m\nabla_m \\
=\ & 
\gamma^{(j)}\xi\wedge\gamma^{(i)}\eta\owedge A_{(j)}{}^m(\nabla_mB)_{(i)} \\
& \quad 
-(-)^{k+l}{\textstyle\sum_{\beta}} \gamma^{(i)}\eta \wedge\gamma^{j_1}\xi_1\wedge\cdots\wedge 
B_{(i)}{}^{j_\beta}{}_n\gamma^{n}\xi_\beta\wedge\cdots\wedge\gamma^{j_k}\xi_k\otimes A_{(j)}{}^m\nabla_m \\ 
&=
\mathfrak{B}^{(k+l)}(\nabla_AB;\xi_1,\ldots,\xi_k,\eta_1,\ldots,\eta_l) \\
&\quad  
+(-)^{k+l}\gamma^{(i)}\eta \wedge\gamma^{j_1}\xi_1\wedge\cdots\wedge 
\gamma^{j_k}\xi_k\otimes 
{\textstyle\sum_{\beta}} B_{(i)}{}_{j_\beta}{}^{n}A_{j_1\cdots n\cdots j_k}{}^m\nabla_m\displaybreak[0] \\ 
=\ &
\mathfrak{B}^{(k+l)}(\nabla_AB;\xi_1,\ldots,\xi_k,\eta_1,\ldots,\eta_l)  
 +(-)^{k+l}\gamma^{(i)}\eta \wedge\gamma^{(j)}\xi\otimes  (A\circ B)_{(j)(i)}{}^m \nabla_m \\ 
=\ & 
\mathfrak{B}^{(k+l)}(\nabla_AB;\xi_1,\ldots,\xi_k,\eta_1,\ldots,\eta_l)
+\mathfrak{D}^{(k+l)}( A\circ B;\xi_1,\ldots,\xi_k,\eta_1\ldots,\eta_l) 
\end{align*}
We used the short abbreviation $\gamma^{(i)}\xi:=\gamma^{i_1}\xi_1\wedge\cdots\wedge\gamma^{i_k}\xi_k$ as well as the notation $(i)=(i_1,\ldots,i_k)$  for multi indices.
\end{proof}
\begin{proof}\,[Corollary \ref{symmetricspace}]
For symmetric spaces we have $\nabla R=0$. Therefore the statement is a consequence of the evolution of the involved endomorphism, see Theorem \ref{evolution}.
\end{proof}

\section{Further remarks}

\begin{itemize}[leftmargin=2em]
\item  {\it Products of pseudo Riemannian manifolds.}\  
Given  a product of two Riemannian manifolds $M=K\times L$ and assume one factor to be non flat. The space of parallel spinors $\mathcal{K}_1(M)$ is given by the tensor product of the two spaces of parallel spinors of the respective factors $\mathcal{K}_1(K)\otimes\mathcal{K}_1(L)$.  
This product structure of $\mathcal{K}_1$ yields  that even if the two summands have finite supersymmetric Killing structures this is not valid for the product, apart from one very special case. This is the case where one factor is   even dimensional with finite SSKS and the second factor is one dimensional.
Even in this very special case the  restriction to products where at least one factor is even dimensional was necessary. This is due to the fact that otherwise the doubling of the spinor bundle would lead to a doubling  of the amount of odd Killing fields. 
This would from the very beginning destroy the finiteness of the SSKS. The latter is similar to the discussion of the real SSKS on K\"{a}hler manifolds of quaternionic type.  

\item {\it Introducing Twistor spinors.}\  
The relation between twistor spinors and conformal vector fields with respect to the morphism $S\otimes S\to \X(\hat M)$ might suggests the extension of the supersymmetric Killing structures to these objects. This needs another ansatz of the SSKS because for our ansatz a closing of the algebra is only guaranteed if we restrict ourself to Killing vector fields. 
The  restriction of the even fields again would force us to restrict the set of odd fields to those twistor spinors which close into Killing vector fields. This, indeed, is a very strong restriction as the example of the flat space tells us. There the space of twistor spinors is of dimension $2^{2[\frac{D}{2}]}$, i.e.\ maximal, but the reduction would lead us to the subset of constant spinors so that we are left with the algebra discussed above. 

The problem to define superalgebras from twistor spinors was also recognized by \cite{Habermann1}. In that article it is shown that the set of imaginary Killing spinors does not lead to a superalgebra in general.  

\item  {\it Non vanishing field strength.}\  
The next step is  to introduce fields on the manifold $M$ which will enter in the Killing equation (motivated by gravity thories with nonvanishing field strengths). 
For the natural  extension of our ansatz the introduction of a nonvanishing right hand side of the Killing equation leads  to certain conditions on the involved fields. These properties arise, because we have to make sure the closure of the Killing structure. These conditions are, however, not uncommon and are assumend to  be related to the field equations of the involved fields.
\end{itemize}


\begin{appendix}

\section{Spin geometry}\label{secSpingeo}

\subsection{Clifford representations and transformations} 

On $V=\RRR^D$ consider the metric $g=(-1,\ldots [\text{t times}]\ldots, -1,1, \ldots [\text{s times}]\ldots ,1)$ with signature $\sigma=t-s$. The Clifford algebra associated to this metric is denoted by $\cl_{s,t}$ and we recall the defining relation
\begin{equation}
vw+wv=-2g(v,w).
\end{equation}
In  this context Euclidean signature is given by $(t=0,s=D)$ and Lorentzian signature by $(t=1,s=D-1)$.

A {\sc Clifford representation} is an irreducible representation of  $C\ell_{s,t}$. They are characterized by the signature $\sigma  \mod 8$ and are listed in Table \ref{clifford}.
In addition we list the irreducible representations of the complexification of the Clifford algebras given by $\cl^c_D=\cl_{s,t}\otimes\CCC=\cl_{D,0}\otimes\CCC$ which do only depend on the dimension $D$.
\begin{table}[htb]\caption{The Clifford representations}\label{clifford}\centering
$\displaystyle
\renewcommand{\arraystretch}{1.9}
\begin{array}{c||c|c|c|c}
\sigma &0&2&4&6\\\hline
{\cl}_{s,t}
 &\mathfrak{gl}_{2^{[\frac{D}{2}]}}\RRR
 &\mathfrak{gl}_{2^{[\frac{D}{2}]}}\RRR
 &\mathfrak{gl}_{2^{[\frac{D}{2}]-1}}\HHH
 &\mathfrak{gl}_{2^{[\frac{D}{2}]-1}}\HHH\\\hline\hline
\sigma&1&3&5&7\\\hline
\cl_{s,t} &\mathfrak{gl}_{2^{[\frac{D}{2}]}}\RRR\oplus\mathfrak{gl}_{2^{[\frac{D}{2}]}}\RRR
 &\mathfrak{gl}_{2^{[\frac{D}{2}]}}\CCC
 &\mathfrak{gl}_{2^{[\frac{D}{2}]-1}}\HHH\oplus\mathfrak{gl}_{2^{[\frac{D}{2}]-1}}\HHH
 &\mathfrak{gl}_{2^{[\frac{D}{2}]}}\CCC\\\hline\hline
D&\multicolumn{2}{c|}{\text{even}}&\multicolumn{2}{c}{\text{odd}}\\\hline
\cl_D^c&\multicolumn{2}{c|}{\mathfrak{gl}_{2^{[\frac{D}{2}]}}\CCC}&\multicolumn{2}{c}{\mathfrak{gl}_{2^{[\frac{D}{2}]}}\CCC\oplus\mathfrak{gl}_{2^{[\frac{D}{2}]}}\CCC}
\end{array}
$
\end{table}

These algebras are considered as acting on $S_\CCC=\CCC^{2^{[\frac{D}{2}]}}$ and the elements are called complex spinors.
A {\sc Spin representation} is an irreducible representation of $Spin(s,t)\subset\cl^+_{s,t} $. These representations only depend on the absolute value of the signature $|\sigma|$, because $\cl^+_{s,t}\ \simeq\ \cl^+_{t,s}.\label{2}$.
For $D$ odd the representation of the Clifford algebra is irreducible under $Spin(s,t)$.
For $D$ even  the representation splits into two irreducible representations $S_\CCC=S_\CCC^+\oplus S_\CCC^-$ of half the dimension. The elements are  called {\sc chiral} or {\sc Weyl spinors} and the projections are given by
\begin{equation}
p^\pm=\tfrac{1}{2}\big({\bf 1}\pm\,\zeta\gamma_{D+1}\big)\,,
\end{equation}
with $\zeta=1$ if $\sigma\equiv0\,{\rm mod}\,4$ and $\zeta=i$ if $\sigma\equiv2\,{\rm mod}\,4$. We look at the different cases:
\begin{itemize}[leftmargin=2em]
\item $|\sigma|=0 $. The projections $p^\pm$ are real such that the real representation of dimension $2^{[\frac{D}{2}]}$ splits into two real representations of dimension $2^{[\frac{D}{2}]-1}$.
\item $|\sigma|=1$, i.e.\  $\sigma=1,7$. We see that $\sigma=1$ gives a real representation of dimension $2^{[\frac{D}{2}]}$.
\item $|\sigma|=2$, i.e.\ $\sigma=2,6$. $\sigma=2$ has a real representation of dimension $2^{[\frac{D}{2}]}$ and $\sigma=6$ has a quaternionic representation. The projections are neither real nor quaternionic such that the representation on the splitting is complex. We get two complex representations of real dimension $2^{[\frac{D}{2}]}$.
\item $|\sigma|=3$, i.e.\ $\sigma=3,5$. $\sigma=3$ gives a quaternionic representation of real dimension $2^{[\frac{D}{2}]+1}$.
\item $|\sigma|=4$.  The projections are quaternionic such that we get a splitting of the quaternionic representation  in two quaternionic representations of real dimension $2^{[\frac{D}{2}]}$.
\end{itemize}

For the construction of real spinors we need some transformations which connect various equivalent Clifford representations. If we start with the representation given by the matrices $\{\gamma^\mu\}$, they are given by
\begin{align}
C^\dagger_\pm \gamma_\mu C_\pm &= \pm(-)^{t+1}\gamma_\mu^T\\
B^\dagger\gamma_\mu B &=  -(-)^t\gamma_\mu^\dagger\\
A_\pm^\dagger\gamma_\mu A_\pm & = \pm \gamma_\mu^*.
\end{align}
The transformations $A,B$ and the  {\sc charge conjugation} $C$ have following properties\footnote{The proofs of the various statements on the transformations in this subsection are an extension of the proof for the Lorentzian case, see \cite{Scherk1}. We start with $A_\pm$ and $B$ and the sign $\epsilon$ follows from counting the antisymmetric matrices in $\{C^\dagger\gamma^{\mu_1\cdots\mu_k}\}_{0\leq k\leq D}$.} 
 where the signs $\epsilon_\pm$ are from Table \ref{sign} and the missing of one indicates the non-existence of the transformation.
\begin{equation}
\begin{aligned}
C^\dagger_\pm  C_\pm &= A_\pm^\dagger A_\pm= B^\dagger B = {\bf 1}\,,
&  \qquad A_\pm A_\pm^* &=\epsilon_\pm{\bf 1}\,,\\
C_\pm^T &= (\pm)^t(-)^{\frac{1}{2}t(t-1)}\epsilon_\pm C_\pm\,,
&A_\pm^T &=\epsilon_\pm A_\pm \\
B =\gamma_1\cdots\gamma_t & = (-)^{\frac{1}{2}t(t-1)}B^\dagger \,, 
&B^\dagger\gamma_\mu B & =B\gamma_\mu B^\dagger\,,\\
C_\pm &=A_\pm B^*\,. &&
\end{aligned}
\label{chargecon1}
\end{equation}

Properties \eqref{chargecon1} don't depend on the signature but only on the dimension of the space, see Lemma \ref{propIndependence1} and \ref{propIndependence2}.
The charge conjugation is block diagonal, i.e.\ a map $S_\CCC^\pm\otimes S_\CCC^\pm\to \CCC$, for $D=0,4$  and off-blockdiagonal, i.e.\ $S_\CCC^\pm\otimes S_\CCC^\mp\to\CCC$, for $D=2,6$. 

\begin{table}[htb]\caption{The signs }\label{sign}\centering
$\displaystyle
\begin{array}{c||c|c|c|c|c|c|c|c}
\sigma &\ \ 0\ \ &\ \ 1\ \ &\ \ 2\ \ &\ \ 3\ \ &\ \ 4\ \ &\ \ 5\ \ &\ \ 6\ \ &\ \ 7\ \  \\\hline\hline
\epsilon_+& + & + & + & / & - & - & -&/\\ \hline
\epsilon_- & + & / & - & - &-  &/  &+& +
\end{array}
$
\end{table}

$A$ and  $B$  give rise to the  {\sc Dirac conjugate} and the {\sc charge conjugate} of a spinor $\eta\in S_\CCC$. They are given by
\begin{equation}
\bar\eta =\eta^\dagger B 
\quad \text{and}\quad  
\eta^C=A_\pm\eta^*,
\end{equation}
respectively. They are  connected via
\begin{equation}
\eta^C=C_\pm\bar\eta^T.\label{ch} 
\end{equation}

We remarked that the symmetry of the charge conjugation does not depent on the signature. This is also true for the higher $\mathfrak{spin}$-invariant morphisms $S\otimes S\to \Lambda^kV\otimes\CCC$ which are explicitly constructed as follows. Take the generators of the Clifford representation and consider the maps
\begin{equation}
\gamma^{\mu_1\ldots\mu_k}:S_\CCC\to\Lambda^kV\otimes S_\CCC \text{ with } 
\gamma^{\mu_1\ldots\mu_k}=
\gamma^{[\mu_1}\gamma^{\mu_2}\cdots\gamma^{\mu_k]}.\label{mord}
\end{equation}
Combined with the charge conjugation this gives trise to the morphism $S_\CCC\otimes S_\CCC\to \Lambda^kV\otimes\CCC$. 

\begin{lemma}\label{propIndependence1}
The symmetry $\Delta^k$ of the morphism \eqref{mord} is given by  
\begin{equation}\begin{aligned}
\Delta^0_\pm &= (\pm)^t(-)^{\frac{1}{2} t(t-1)}\epsilon_\pm  \\  
\Delta_\pm^k &= (\pm)^k(-)^{\frac{1}{2} k(k+2t+1)}\Delta^0_\pm 
\end{aligned}\end{equation}
with
\begin{equation}
\Delta_\pm^k\ =\ -\,\Delta_\pm^{k-2}.
\end{equation}
\end{lemma}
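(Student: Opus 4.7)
The plan is to read off $\Delta_\pm^k$ by swapping the two arguments in the bilinear pairing $(\eta,\xi)\mapsto \eta^T C_\pm\gamma^{\mu_1\cdots\mu_k}\xi$ and tracking the resulting sign. Since the expression is a scalar, it equals its own transpose, so
\[
\xi^T C_\pm\gamma^{(k)}\eta \;=\; \eta^T(\gamma^{(k)})^T C_\pm^T\xi,
\]
with the shorthand $\gamma^{(k)}:=\gamma^{\mu_1\cdots\mu_k}$. The base case $k=0$ is immediate: using the identity $C_\pm^T=(\pm)^t(-)^{\frac{1}{2}t(t-1)}\epsilon_\pm C_\pm$ from \eqref{chargecon1} one reads off $\Delta_\pm^0$ as stated. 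For general $k$ one substitutes $C_\pm^T=\Delta_\pm^0 C_\pm$ and then has to rewrite $(\gamma^{(k)})^T C_\pm$ in the form $(\text{sign})\cdot C_\pm\gamma^{(k)}$.

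To accomplish this, first transport $(\gamma^{(k)})^T$ past $C_\pm$. Applying $C_\pm^{-1}\gamma_\mu C_\pm=\pm(-)^{t+1}\gamma_\mu^T$ once for each factor and then reversing the order of the $k$ (anticommuting) indices, which costs the standard $(-)^{k(k-1)/2}$, gives
\[
(\gamma^{(k)})^T \;=\; (\pm)^k(-)^{k(t+1)+\frac{1}{2}k(k-1)}\;C_\pm^{-1}\gamma^{(k)} C_\pm .
\]
Substituting back yields $\Delta_\pm^0(\pm)^k(-)^{k(t+1)+\frac{1}{2}k(k-1)}\,C_\pm^{-1}\gamma^{(k)} C_\pm^2$ on the right, so the content of the proof is to show that the extra $C_\pm^2$ collapses.

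The key step is precisely this: conjugation by $C_\pm$ is the transpose up to the scalar $\pm(-)^{t+1}$, so conjugation by $C_\pm^2$ squares this operation and acts as the identity on every $\gamma_\mu$; hence $C_\pm^2$ commutes with the whole Clifford algebra and by Schur's lemma, applied to the irreducible complex Clifford module $S_\CCC$ (or, in odd $D$, to the chosen simple summand of $\cl_D^c$), $C_\pm^2$ is a scalar multiple of the identity. Consequently $C_\pm^{-1}\gamma^{(k)} C_\pm^2=C_\pm\gamma^{(k)}$, and comparing with $\Delta_\pm^k C_\pm\gamma^{(k)}$ gives
\[
\Delta_\pm^k \;=\; \Delta_\pm^0\,(\pm)^k(-)^{k(t+1)+\frac{1}{2}k(k-1)},
\]
which equals the claimed formula after rewriting $k(t+1)+\tfrac12 k(k-1)=\tfrac12 k(k+2t+1)$.

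The recursion is then a direct verification: the ratio $\Delta_\pm^k/\Delta_\pm^{k-2}$ picks up the trivial factor $(\pm)^2=1$ together with $(-)^{\frac{1}{2}(k(k+2t+1)-(k-2)(k+2t-1))}=(-)^{2k-3+2(t+1)}=-1$. The only real obstacle is the collapse of $C_\pm^2$; everything else is sign bookkeeping. One could also circumvent Schur by computing $C_\pm^2$ explicitly from $C_\pm=A_\pm B^*$ together with $A_\pm A_\pm^*=\epsilon_\pm\mathbf{1}$ and the listed properties of $B$, but the Schur argument is by far the cleanest.
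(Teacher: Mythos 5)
Your overall strategy --- transpose the scalar pairing, use the symmetry of $C_\pm$ from \eqref{chargecon1}, push $(\gamma^{\mu_1\cdots\mu_k})^T$ through one generator at a time via $C^\dagger_\pm\gamma_\mu C_\pm=\pm(-)^{t+1}\gamma_\mu^T$, and pay $(-)^{\frac12 k(k-1)}$ for reversing the antisymmetrized indices --- is exactly the intended one (the paper gives no proof of its own beyond a footnote deferring to Scherk's Lorentzian computation), and your sign bookkeeping, including the simplification $k(t+1)+\tfrac12k(k-1)=\tfrac12k(k+2t+1)$ and the two-step recursion, is correct.

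The one step that does not hold up is the collapse of $C_\pm^2$. Conjugation by $C_\pm$ agrees with $\pm(-)^{t+1}\times(\text{transpose})$ only on the generators $\gamma_\mu$; since transpose is an anti-automorphism while conjugation is an automorphism, you cannot ``square the operation'' to conclude $C_\pm^{-2}\gamma_\mu C_\pm^2=\gamma_\mu$. Every attempt to derive this from the listed relations is circular: transposing the intertwining relation gives $C_\pm\gamma_\mu^TC_\pm^{-1}=\pm(-)^{t+1}\gamma_\mu$, i.e.\ conjugation of $\gamma_\mu^T$ by $C_\pm$ in the \emph{opposite} direction to the one you need. In fact $C^\dagger C={\bf 1}$ and $C^T=\Delta^0C$ only give $C\overline{C}=\Delta^0{\bf 1}$, so $C^2=\Delta^0\,\overline{C}^{-1}C$ is a scalar matrix if and only if $C$ is proportional to a real matrix --- true for the explicit matrices of Appendix \ref{SecDim4}, but a normalization fact rather than a consequence of \eqref{chargecon1}, so Schur cannot be invoked. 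The gap vanishes once you use the paper's actual convention: the footnote makes clear that the matrices whose symmetry is counted are $C^\dagger\gamma^{\mu_1\cdots\mu_k}$, not $C\gamma^{\mu_1\cdots\mu_k}$. With $(C_\pm^\dagger)^T=\overline{C_\pm}$ and $C_\pm\overline{C_\pm}=\Delta^0_\pm{\bf 1}$ one gets directly
\[
\bigl(C_\pm^\dagger\gamma^{\mu_1\cdots\mu_k}\bigr)^T=(\gamma^{\mu_1\cdots\mu_k})^T\,\overline{C_\pm}
=(\pm)^k(-)^{k(t+1)+\frac12 k(k-1)}\,C_\pm^\dagger\gamma^{\mu_1\cdots\mu_k}\,C_\pm\overline{C_\pm}
=\Delta^0_\pm(\pm)^k(-)^{\frac12 k(k+2t+1)}\,C_\pm^\dagger\gamma^{\mu_1\cdots\mu_k},
\]
with no extra input. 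So replace the Schur step by this observation (or equivalently work with $C^{-1}\gamma^{(k)}$ throughout) and the proof is complete.
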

Because of the periodicity we only have to consider $k=0$ and $k=1$. If we in addition use $(-)^{\frac{1}{2}(2m)(2m-1)}=(-)^m$ we get the result which is given in Table \ref{summary}.
\begin{lemma}\label{propIndependence2}
$\Delta^k_\pm$ for $k=0,1$ is  given by\footnote{Here as well as in Table \ref{summary}, \ref{complexsusy} and \ref{realsusy} we use a slightly modified definition of the charge conjugation. 
This modification is given by $C_\pm$ goes to $C_\pm$ for $t$ odd, and to $C_\mp$ for $t$ even.\label{foot}}
\begin{center}
$\displaystyle\small{\renewcommand{\arraystretch}{1.9}
\begin{array}{c|c|c|c|c|c|c|c}
  D=0&1&2&3&4&5&6&7   \\\hline\hline
\Delta^0_\pm=+ & \Delta^0_+=+  & \Delta^0_\pm=\pm  & \Delta^0_-=- &
 \Delta^0_\pm=-  & \Delta^0_+=-   &  \Delta^0_\pm =\mp &  \Delta^0_-=+\\\hline
\Delta^1_\pm=\pm & \Delta^1_+=+  & \Delta^1_\pm=+ & \Delta^1_-=+ &
 \Delta^1_\pm=\mp  & \Delta^1_+=-   &  \Delta^1_\pm =- &  \Delta^1_-=-\\
\end{array}
}$\end{center}
\end{lemma}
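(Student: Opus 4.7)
The plan is to verify the tabulated values directly by reduction to the formula of Lemma \ref{propIndependence1}, together with Table \ref{sign}, and then to check independence of the choice of signature at fixed $D$.

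First, for each residue class $D \bmod 8$, I would pick a convenient representative $(s,t)$ with $t$ small (either $t=0$ or $t=1$, depending on the parity of $D$), which collapses the factors $(\pm)^t$ and $(-)^{t(t-1)/2}$ in the formula $\Delta^0_\pm = (\pm)^t (-)^{t(t-1)/2} \epsilon_\pm$ from Lemma \ref{propIndependence1} to explicit signs. Multiplying by the appropriate $\epsilon_\pm$ read off Table \ref{sign} yields the tabulated entry of $\Delta^0_\pm$ immediately. Running through the eight residue classes produces the first row of the stated table.

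Next, I need to confirm that $\Delta^0_\pm$ indeed only depends on $D \bmod 8$, not on the full signature. The transitions between signatures at fixed $D$ are generated by the shifts $(s,t) \to (s\pm 1, t\mp 1)$, which change $\sigma$ by $\pm 2$; the sign change of $(\pm)^t (-)^{t(t-1)/2}$ under such a shift must match the sign change of $\epsilon_\pm$ across the corresponding columns of Table \ref{sign}. Separately, the shift $(s,t) \to (s+1,t+1)$ preserves $\sigma$ but flips the parity of $t$; here the sign change of the $t$-dependent factors is absorbed precisely by the footnote convention swapping $C_\pm \leftrightarrow C_\mp$ when $t$ is even. Both checks reduce to short sign computations to be carried out row-by-row against Table \ref{sign}.

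For the $\Delta^1_\pm$ row, I would apply the case $k=1$ of the recursion $\Delta^k_\pm = (\pm)^k(-)^{k(k+2t+1)/2}\Delta^0_\pm$ of Lemma \ref{propIndependence1}, which gives $\Delta^1_\pm = \pm(-)^{t+1}\Delta^0_\pm$, and substitute the already-tabulated $\Delta^0_\pm$. The only new ingredient is the parity of $t$, handled via the same footnote swap as above. The main obstacle is bookkeeping rather than content: the interplay between $\epsilon_\pm$ (which is sometimes undefined, as indicated by the ``$/$'' entries in Table \ref{sign}), the parity of $t$, and the footnote convention must be tracked carefully across all eight dimensional residues. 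I would organize the verification as a compact auxiliary table, with columns for the chosen representative $(s,t)$, the value of $\epsilon_\pm$ pulled from Table \ref{sign}, and the $t$-sign factor, so that each entry of the target table in Lemma \ref{propIndependence2} appears as a single one-line product.
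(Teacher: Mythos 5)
Your proposal is correct and follows essentially the same route as the paper, which obtains the table by direct substitution into the formulas of Lemma \ref{propIndependence1} together with Table \ref{sign} and the footnote's swap convention (the paper leaves exactly this computation to the reader). One small bookkeeping remark: at fixed $D$ the only signature transitions are $(s,t)\to(s\mp1,t\pm1)$, which simultaneously shift $\sigma$ by $\pm2$ and flip the parity of $t$, so the two independence checks you list separately are really one combined sign computation --- the move $(s,t)\to(s+1,t+1)$ changes $D$ by $2$ and is not relevant here.
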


\begin{table}[htb]\caption{The symmetry properties of the $\mathfrak{spin}$-invariant morphisms which are given by
$S_\CCC\otimes S_\CCC\to\Lambda^{k}V\otimes\CCC$ for $D$ odd, and by
 $S_\CCC^\pm\otimes S_\CCC^\pm\to \Lambda^{2m+1}V\otimes\CCC$
(resp.\ $S_\CCC^\pm\otimes S_\CCC^\mp\to \Lambda^{2m}V\otimes\CCC$)
in the chiral (c)
or  $S_\CCC^\pm\otimes S_\CCC^\mp\to \Lambda^{2m+1}V\otimes\CCC$
(resp.\ $S_\CCC^\pm\otimes S_\CCC^\pm\to \Lambda^{2m}V\otimes\CCC$)
in the non chiral (nc)  case for $D$ even.}\label{summary}\centering

$\displaystyle\renewcommand{\arraystretch}{1.9}
\begin{array}{c||c|c||c|c||c|c}
D & k=2m & k=0& k=2m+1 & k=1 & &\\\hline
 &  \multicolumn{2}{c||}{\Delta^{2m}} &  \multicolumn{2}{c||}{\Delta^{2m+1}}&\text{via}&
\\\hline
0  & (-)^m &+
    &\pm(-)^m&\pm&C_\pm& \text{nc}\\\hline
1 & (-)^m &+
   & (-)^m&+&C_+&\\\hline
2 &\pm(-)^m&\pm
   & (-)^m & +&C_\pm& \text{c}\\\hline
3 & -(-)^m &-
   & (-)^m& +&C_-&\\\hline
4 & -(-)^m &-
   &\mp(-)^m&\mp&C_\pm&\text{nc}\\\hline
5 & -(-)^m &-
   &-(-)^m &-&C_+&\\\hline
6 &\mp(-)^m &\mp
   &-(-)^m &-&C_\pm&\text{c}\\\hline
7 & (-)^m &+
   &-(-)^m& -&C_-
\end{array}
$
\end{table}


\subsection{Real spinors}\label{realspinorsapp}

So far we have constructed morphisms $A,B,C:S_\CCC\otimes S_\CCC\to\CCC$  based on the complex spinors. To get real supersymmetry, we need real spinors.

We have to distinguish the three cases of real, quaternionic and complex representations. For their description we need the notion of (anti-)conjugation.

\begin{remark}
A conjugation (resp.\ anti-conjugation) on a complex space $V$ is a map $\tau:V\to V$ with $\tau^2=1$ (resp.\ $\tau^2=-1$) which is  anti-linear, i.e. $\tau(az)=\bar a\tau(z)$ for all $a\in\CCC,z\in V$.

In our examples of the spinor spaces the charge conjugation yields the conjugation ($\epsilon_\pm=1$) or anti-conjugation ($\epsilon_\pm=-1$).
\end{remark}

\begin{itemize}[leftmargin=2em]
\item {{\it The Real Case:} $\sigma=1,0,7$}

The fact that we have a real representation of the spinors is reflected in the existence of a conjugation $\tau$ on $S_\CCC$ which commutes with the action of $Spin$. In this case the real part of $S_\CCC$ with respect to $\tau$ is  the space:
\begin{equation}
S=\left\{\left.z\in S_\CCC\right| z=\tau(z)\right\}=\,{\rm Re}(S_\CCC)\subset S_\CCC.
\end{equation}
Moreover in the case $\sigma=0$ the real structure is compatible with the chiral structure, i.e.\ $S=S^+\oplus S^-$.

\item {{\it The Complex Case:} $\sigma=2,6$}

In signature $\sigma=2$ we have a real representation. Because the representation only depends on the absolute value of the signature,  in both cases a conjugation $\tau$ exists which commutes with the action of $Spin$. This real structure does not respect the chiral splitting of $S_\CCC$. In fact, we have $\tau(S_\CCC^\pm)=S_\CCC^\mp$.
The space of real  spinors is given by
\begin{equation}\begin{aligned}
S&=\big\{z\in S_\CCC\ |\ \tau(z)=z\big\} 
  =\left\{\binom{z}{\tau(z)}\big|\ z\in S_\CCC^+\right\}\\
  &\subset {\rm span}_\CCC\{Q_\alpha,Q_{\dot \alpha}\}=S_\CCC
\end{aligned}\end{equation}
The action of $U(1)$ on $S_\CCC=S_\CCC^+\oplus S_\CCC^-$ given by
\begin{equation}
a(\phi^+,\phi^-)=(a\phi^+, \bar a\phi^-)\quad\text{for }a\in U(1)
\end{equation}
commutes with the conjugation $\tau$ and so yields an additional symmatry called  $R$-symmetry. 

\item {{\it The Quaternionic Case:} $\sigma=3,4,5$} 

Take a quaternionic representation $S_\CCC$ and consider it as complex as indicated. There exists an anti-conjugation $\tau_0:S_\CCC\to S_\CCC$ which commutes with the action of $Spin$.

On $\CCC^2$ we consider the anti-conjugation given by $\tau_1={\rm conj}\circ\Omega$ with $\Omega=\left(\begin{array}{cc}0&1\\-1&0\end{array}\right)$.

We define the space $\tilde S_\CCC=S_\CCC\oplus S_\CCC\simeq S_\CCC\otimes \CCC^2$ with conjugation $\tau=\tau_0\otimes\tau_1:\tilde S_\CCC\to \tilde S_\CCC$.
The real spinor space is\footnote{In fact, $S$ is a representation space for  $Spin\otimes SU(2)$, because the $R$-symmetry group 
$SU^*(2)\simeq SU(2)$ commutes with the action of $\tau_1$. 
The $R$-symmetry group can be a smaller group contained in $SU(2)$, namely $SO^*(2)\simeq U(1)$.\label{footR}}
\begin{equation}
S=\left\{\left.z\in\tilde S_\CCC\right|\,\tau(z)=z\right\}=\left\{\left.(z^1,z^2)\in\tilde S_\CCC\right|\,(-\tau_0(z^2),\tau_0(z^1))=(z^1,z^2)\right\}.\label{real}
\end{equation}
Moreover in the case $\sigma=4$ the quaternionic structure is compatible  with the chiral structure, i.e.\ $S=S^+\oplus S^-$ 
\end{itemize}


\section{Supersymmetry}\label{secSUSY}

In the introduction we drew a connection between supersymmetry and spinors which obey a Killing equation and in particular   we stressed the importance of parallel spinors. But we did this without explaining supersymmetry itself. In the  first part of this section we will shortly recall the flat super Poincar\'e algebra, before we turn to the construction of supersymmetry via spinors and extend this to curved manifolds. The results summed up in theorems \ref{whencomplex} and \ref{whenreal} are  similar to the results in \cite{dAuFerrLledVar} with a slight modification in the case of signatures 2 and 6 which have to be discussed seperately.

The famous result by Coleman and Mandula in \cite{ColMan} states  --  roughly --  that in a  physical theory the symmetry algebra is at most a direct sum of the Poincare algebra $\mathfrak{so}(3,1)\ltimes \RRR^{3,1}$ and an algebra of internal symmetries. This means that the generators of the latter do not have any spin indices but only multiplet indices. One requirement in this theorem is that the generators form an algebra with respect to the bracket. If we weaken this condition and allow  the generators to  form a superalgebra this direct sum behaviour is not longer true. It was shown in \cite{WessZumino2} that it is possible to construct a superalgebra with nontrivial mixing of internal and spinorial generators and proven in \cite{HaagLopusSohn} that the construction is the only one. The generators of this enlarged {\sc super Poincar\'e algebra} $\mathfrak{p}$ are:
\begin{itemize}[leftmargin=2em]
\item the even generators :
\begin{itemize}[leftmargin=2em]
\item Poincar\'e algebra: $E_{ij}$, $P_k$ for $1\leq i,j,k\leq 4$.
\item Internal symmetry $\mathfrak{su}(N)\oplus \mathfrak{u}(1)^M$: $X^a=(X^a)^A_B$ and  $x^{AB}=-x^{BA}=x^{AB}_o b^o$ for  $1\leq A,B\leq N$, $1\leq a\leq N^2-1$, $1\leq o\leq M$.
\end{itemize}
\item the odd generators: $Q_\alpha^A$ for $1\leq \alpha\leq 4$, $1\leq A\leq N$.
\end{itemize}
The odd generators behave as Majorana spinors under Lorentz transformations and the additional, nontrivial  brackets are given by 
\begin{equation}\begin{aligned}
{[}E^{ij},Q_\alpha^A] &= (\gamma^{ij})_{\alpha}^{\beta}Q^A_\beta\,,&
[P_k,Q_\alpha^A] &= 0 
\\
[Q_\alpha^A,Q_\beta^B] &= 2\gamma^k_{\alpha\beta}\delta^{AB}P_k+\epsilon_{\alpha\beta}x^{AB}\,, &
[X^a,Q_\alpha^A]&= (X^a)^A_BQ_\alpha^B\,. \label{classicalSUSY}
\end{aligned}\end{equation}
In this list $\gamma^k_{\alpha\beta}$ is the product of the charge conjugation matrix and the image of the standard basis under the spin representation $\rho$. 

Of course, this can easily be enlarged to arbritrary dimensions $D$ and signatures. We ``only'' have to make sure that the odd-odd commutator is indeed an anticommutator. This is to say the matrix  $\gamma^k_{\alpha\beta}$ is symmetric. Later in this appendix we will see when this construction is possible and  that this symmetry requirement can be weakened. Furthermore to form a real algebra the odd generators should be in a real representation.


\subsection{Complex supersymmetry}

Supersymmetry needs {symmetric} morphisms $S\otimes S\to V$ which will yield the supersymmetry brackets. 
In the case of complex supersymmetry the existence or non existence follows from  whether or  not the morphism for $k=1$ is symmetric, see Table  \ref{summary}. The result is given in Table \ref{complexsusy} and yields
\begin{theorem}\label{whencomplex}
Complex supersymmetry is possible in dimension $D=0,1,2,3,4\mod 8$.
\end{theorem}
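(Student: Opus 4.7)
The plan is to reduce the existence of a super Poincar{\'e}-type bracket to a purely algebraic symmetry question on the $\mathfrak{spin}$-invariant bilinear map $S\otimes S\to V$ and then read off the admissible dimensions from the tables of $\Delta^k$ computed earlier.

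First I would recall that in the complex setting the odd-odd bracket is an \emph{anti}commutator of odd generators $Q_\alpha$, so in order for a relation of the form $\{Q_\alpha,Q_\beta\}=\gamma^k_{\alpha\beta}P_k$ (possibly augmented by central charges) to be consistent, the pairing $(\phi,\psi)\mapsto C(\phi,\gamma^k\psi)e_k$ -- namely the map obtained from the charge conjugation $C_\pm$ composed with Clifford multiplication in degree $k=1$ -- must be \emph{symmetric}. This is precisely the sign $\Delta^1_\pm$ introduced in Lemma \ref{propIndependence1}. Supersymmetry in the strict complex sense is possible if and only if one can choose a sign $\pm$ for which $\Delta^1_\pm=+$.

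Next I would invoke Lemma \ref{propIndependence2}, whose table lists $\Delta^1_\pm$ modulo $8$. Reading off entries: for $D\equiv 1\pmod 8$ one has $\Delta^1_+=+$, for $D\equiv 2$ both $\Delta^1_\pm=+$, for $D\equiv 3$ one has $\Delta^1_-=+$; for $D\equiv 0$ the two signs yield $\Delta^1_\pm=\pm$ so the choice $+$ works; and for $D\equiv 4$ the two signs yield $\Delta^1_\pm=\mp$ so again one choice produces a symmetric pairing. In the remaining cases $D\equiv 5,6,7\pmod 8$ all available entries are negative, so no symmetric invariant map $S\otimes S\to V$ exists and the proposed anticommutator would have to vanish identically on symmetric spinor combinations, contradicting the role of $\{Q_\alpha,Q_\beta\}$ as the generator of translations. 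Collecting the admissible residues gives the claimed set $\{0,1,2,3,4\}$ modulo $8$.

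The only subtle point -- and the main thing to be careful about -- is the bookkeeping of signs in Lemma \ref{propIndependence2} together with the convention footnote that swaps $C_\pm\leftrightarrow C_\mp$ for $t$ even; once that is handled consistently, the conclusion follows by simple inspection of the table, and Table \ref{complexsusy} referred to in the statement is precisely the summary of this case analysis. No further structural argument is required, since a single symmetric $\mathfrak{spin}$-invariant pairing $S\otimes S\to V$ is sufficient to define the bracket in \eqref{classicalSUSY} and to verify its graded Jacobi identity, the latter reducing in flat space to the standard $\gamma$-matrix Fierz identity, which holds in all dimensions.
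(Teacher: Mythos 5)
Your proposal is correct and follows essentially the same route as the paper: the paper likewise reduces the question to whether the degree-one $\mathfrak{spin}$-invariant morphism $S\otimes S\to V$ can be chosen symmetric, i.e.\ whether $\Delta^1_\pm=+$ for some sign, and then reads the admissible residues $D\equiv 0,1,2,3,4\pmod 8$ directly off the table in Lemma \ref{propIndependence2} (summarized in Table \ref{complexsusy}). The only quibble is your closing remark about the Fierz identity: for the super Poincar\'e algebra the odd-odd-odd Jacobi identity holds trivially because $[P_k,Q_\alpha]=0$, so no Fierz rearrangement is actually needed, but this does not affect the argument.
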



\subsection{Real supersymmetry}

In the case of real supersymmetry we are able to construct supersymmetry although the charge conjugation itself has not the ``right'' symmetry property. This is due to the appearance of the additional $R$-symmetry group, in particular in the quaternionic case.

With regard to real supersymmetry we have to insist on two conditions
\begin{itemize}[leftmargin=2em]
\item The bracket has to be real.
\item The symmetry of the bracket  (which is not in all cases a priori given by the one of the charge conjugation).
\end{itemize}

\begin{proposition}\label{bilinear}
If we restrict ourself to the real spinors, the morphisms $S\otimes S\to \CCC$ and  $S\otimes S\to V\otimes \CCC$  take their values in $\RRR$ and  $V$, respectively (or, accidentally, in $i\RRR$ and $iV$),  if they are defined as follows in the different cases.
\begin{itemize}[leftmargin=2em]
\item In the real and complex case the morphisms are given by the restriction of $C$ to $S\subset S_\CCC$. The explicit form is given by $\eta\otimes\xi\mapsto \eta^TC^\dagger\gamma_\mu\xi $. We get
\begin{equation}\begin{aligned}
C(\eta,\xi)&=\bar\xi\eta \\
C(\eta,\gamma^\mu\xi)&=\bar\xi\gamma^\mu\eta.
\end{aligned}\end{equation}

\item In the quaternionic case\footnote{We denote elements in $S$ in the quaternionic case by capital Greek letters, and the entries in the pair by the corresponding small Greek letters, e.g.\ ${\rm H}=(\eta,\tau(\eta)), \Phi=(\phi,\tau(\phi))$} we take the morphism to be of the form\linebreak 
$\Phi\otimes\Psi\stackrel{C_\Omega}{\mapsto}\frac{1}{2}\Omega^{IJ}\phi_I^TC^\dagger\gamma_\mu\psi_J$
if $\Delta^1=-1$, or 
$\Phi\otimes\Psi\stackrel{C_\delta}{\mapsto}\frac{1}{2}\delta^{IJ}\phi_I^TC^\dagger\gamma_\mu\psi_J$
if $\Delta^1=+1$. 
For $\Delta^1=-1$ this yields
\begin{equation}\begin{aligned}
C_\Omega({\rm H},\Xi)&=\frac{1}{2}(\bar\xi\eta+\overline{\xi_C}\eta_C)\\
C_\Omega({\rm H},\gamma^\mu\Xi)&=\frac{\pm1}{2}(-)^{t+1}\big(\bar\eta\gamma^\mu\xi+\overline{\eta_C}\gamma^\mu\xi_C\big),
\end{aligned}\end{equation}
and for $\Delta^1=+1$
\begin{equation}\begin{aligned}
C_\delta({\rm H},\Xi)&=-\frac{1}{2}(\overline{\xi_C}\eta+\bar\xi \eta_C) \\
C_\delta({\rm H},\gamma^\mu\Xi)&=\frac{\pm1}{2}(-)^{t+1}
     \big(\overline{\eta_C} \gamma^\mu\xi+\bar\eta\gamma^\mu\xi_C\big).
\end{aligned}\end{equation}
\end{itemize}
\end{proposition}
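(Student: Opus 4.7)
The plan is to verify, by direct computation of complex conjugates, that each of the listed bilinear forms is invariant (or anti-invariant) under complex conjugation when its arguments are taken to be real spinors. The ingredients are the defining intertwining relations $A^\dagger \gamma_\mu A = \pm \gamma_\mu^*$, $B^\dagger \gamma_\mu B = -(-)^t \gamma_\mu^\dagger$, $C = AB^*$, together with the unitarity of $A,B,C$ and the parameters $\epsilon_\pm$ recorded in Table~\ref{sign}. The real structure on $S$ will be encoded, in each case, by an algebraic identification of $\eta^*$ with something linear in $\eta$, which is the whole point of passing to real spinors.

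In the real and complex cases, a real spinor satisfies $\eta = \eta^C = A_\pm \eta^*$, so $\eta^* = A_\pm^\dagger \eta$. Starting from $z:=\eta^T C^\dagger \gamma_\mu \xi$, I would first rewrite $\bar z = \xi^T B^* \gamma_\mu^* \eta^*$ by entrywise conjugation, substitute $\eta^* = A_\pm^\dagger \eta$, and then push $\gamma_\mu^*$ past $A_\pm^\dagger$ using $\gamma_\mu^* A_\pm^\dagger = \pm A_\pm^\dagger \gamma_\mu$. The remaining factor $B^* A_\pm^\dagger$ is processed via $C = AB^*$, i.e.\ $B^* = A_\pm^\dagger C$, so that one obtains $\bar z = \pm\,\epsilon_\pm\, \xi^T A_\pm^\dagger C A_\pm^\dagger \gamma_\mu \eta$ after accounting for $A_\pm^T = \epsilon_\pm A_\pm$. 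A parallel manipulation using $\xi = A_\pm \xi^*$ brings this back into the form $\pm \eta^T C^\dagger \gamma_\mu \xi$ up to the overall sign dictated by $\epsilon_\pm$ and the symmetry $\Delta^1$ from Lemma~\ref{propIndependence2}; the outcome is $\bar z = \pm z$, proving that $z\in\RRR$ (or $i\RRR$, the case where the overall sign is negative). The scalar variant $\eta^T C^\dagger \xi$ is treated identically, and the reformulation $C(\eta,\xi)=\bar\xi\eta$, $C(\eta,\gamma^\mu\xi)=\bar\xi\gamma^\mu\eta$ follows from definitions together with the identities just established (here one exploits $C = AB^*$ once more to convert $\eta^T C^\dagger$ into $\bar\eta^T$-type expressions via~\eqref{ch}).

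In the quaternionic case the real spinors have the form $\Phi=(\phi_1,\phi_2)\in S_\CCC\oplus S_\CCC$ subject to $\phi_2 = \tau_0(\phi_1)$, i.e.\ $\phi_{I,C} = \Omega^{IJ}\phi_J$ with $\tau_0=A_\pm\cdot{}^{\ast}$ obeying $\tau_0^2=-1$. I would plug the ansatz $C_\Omega(\mathrm H,\Xi) = \tfrac12 \Omega^{IJ}\eta_I^T C^\dagger\gamma_\mu\xi_J$ (resp.\ $C_\delta$) into the above conjugation computation one summand at a time; the spinorial conjugation produces an extra sign $\Delta^1$ from the symmetry $\eta_I^T C^\dagger\gamma_\mu\xi_J = \Delta^1\,\xi_J^T C^\dagger \gamma_\mu\eta_I$, whereas the $R$-symmetry contraction produces the compensating sign $\Omega^{JI}=-\Omega^{IJ}$ (resp.\ $\delta^{JI}=\delta^{IJ}$). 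These combine to cancel the undesired sign precisely when $\Omega$ is used for $\Delta^1=-1$ and $\delta$ for $\Delta^1=+1$, guaranteeing reality. The explicit formulas in terms of Dirac conjugates then follow by the same substitution $\phi_{I,C}=\Omega^{IJ}\phi_J$ and the identity $\bar\eta\gamma^\mu\xi = \pm(-)^{t+1}(\bar\xi\gamma^\mu\eta)^{\ast}$ derived from the symmetry of $C^\dagger\gamma_\mu$. The main obstacle, and essentially the only content, is the sign bookkeeping: one must verify that the combinations of $\epsilon_\pm$, $\Delta^0_\pm$, $\Delta^1_\pm$ from Tables~\ref{sign} and~\ref{summary} agree so that the reality conclusion is uniform across signatures, which is really a case analysis modulo~$8$ rather than a conceptual difficulty.
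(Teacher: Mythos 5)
Your proposal is correct and follows essentially the route the paper intends: the paper gives no written proof of Proposition \ref{bilinear}, but the Remark immediately following it records exactly the conjugation identities \eqref{realityCharge} and \eqref{35} that your computation with $A^\dagger\gamma_\mu A=\pm\gamma_\mu^*$, $C=AB^*$ and $\eta^*=A^\dagger_\pm\eta$ establishes, and reality then follows by setting $\eta_C=\eta$, $\xi_C=\xi$ (resp.\ $\phi_{I,C}=\Omega^{IJ}\phi_J$ in the quaternionic case). The remaining content is, as you say, the sign bookkeeping against Tables \ref{sign} and \ref{summary}, which the paper delegates to the extension of the Lorentzian computation in \cite{Scherk1}.
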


\begin{remark}
\begin{enumerate}[leftmargin=2.5em]
\item[(1)] We recall the symmetry of the morphism $S\otimes S\to\Lambda^k$
\begin{equation}
\Delta^1_{\pm}\ =\ -\epsilon_\pm(\pm)^{t+1} (-)^{\frac{1}{2}t(t+1)}\label{decide}.
\end{equation} 
Consider for example  the quaternionic ($\epsilon_\pm=-$) \underline{and} symmetric ($\Delta^1=+$) case. \eqref{decide} is equivalent to  $ (\pm)^{t+1}=(-)^{\frac{1}{2}t(t+1)}$ which has no solution for Lorentzian space-times ($t=1$).
\item[(2)] We have the following relations between spinors and their charge conjugated
\begin{align}
\bar\xi\eta&=(\pm)^t(\overline{\xi_C}\eta_C)^\dagger\label{realityCharge}\\
\bar\xi\gamma^\mu\eta&=(\pm)^{t+1}(\overline{\xi_C}\gamma^\mu\eta_C)^\dagger\label{35}
\end{align}
\item[(3)]
We  mentioned in the last remark the accidental occurrence of different reality properties of the two morphisms when restricted to real spinors.
This in fact no accident, but due to
\[
\tau(\gamma^\mu\varphi)=A_\pm(\gamma^\mu\varphi)^*=\pm \gamma^\mu A_\pm\varphi^*
=\pm\gamma^\mu\varphi
\]
for $\varphi=\tau\varphi$. The sign reflects whether we choose $C_+$ or $C_-$ to define the charge conjugation.
Whenever we have to choose $C_-$ we consequently have to replace the usual morphism  $V\otimes S_\CCC\ni X\otimes\varphi\mapsto X\varphi\in S_\CCC$ induced by Clifford multiplication  by $V\otimes S\ni X\otimes\varphi\mapsto iX\varphi\in S$ to make sure that we have the same reality property in both morphisms.
\end{enumerate}
\end{remark}

The construction of the bracket  together with \eqref{decide} provides us the tool to decide when real supersymmetry is possible. The complete result is given in Table \ref{realsusy}.

\begin{theorem}\label{whenreal}
Real supersymmetry is possible in dimension $D=1,2,3\mod 8$ (independent of the signature) and furthermore in dimension and signature \linebreak $(D\ {\rm mod}\ 8, \sigma\ {\rm mod}\ 8)=(0,0)$, $(0,2)$, $(0,4)$, $(4,0)$, $(4,4)$, $(4,6)$, $(5,3)$, $(5,5)$, $(6,4)$, $(7,3)$ and $(7,5)$.
\end{theorem}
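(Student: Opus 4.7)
The plan is a case analysis by the residue of $\sigma$ modulo $8$, using the explicit bilinear maps constructed in Proposition~\ref{bilinear} and the symmetry table provided by Lemma~\ref{propIndependence2} together with Table~\ref{summary}. Real supersymmetry amounts to producing, from the charge conjugation, a $\mathrm{Spin}(s,t)$-equivariant bracket $\{\,\cdot\,,\,\cdot\,\}\colon S\otimes S\to V$ that is symmetric and takes real values; up to the rescaling by $i$ discussed in the third item of the Remark following Proposition~\ref{bilinear}, every candidate bracket arises as the restriction of one of the morphisms $S_\CCC\otimes S_\CCC\to V\otimes\CCC$ from Table~\ref{summary}. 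Thus the whole statement reduces to checking, for each $(D,\sigma)\bmod 8$, whether there is a compatible choice of sign $\pm$ (so that $\epsilon_\pm$ in Table~\ref{sign} is nonzero) and, in the quaternionic case, a compatible $R$-symmetry contraction ($\Omega$ or $\delta$) that simultaneously enforces reality and symmetry.

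First I would handle the real case $\sigma\equiv 0,1,7\pmod 8$ and the complex case $\sigma\equiv 2,6\pmod 8$ together. In both regimes the real spinor bundle $S$ carries only a single copy of the complex spinor representation, either as $\mathrm{Re}_\tau(S_\CCC)$ or as the graph of $\tau$ in $S_\CCC^+\oplus S_\CCC^-$, so Proposition~\ref{bilinear} leaves no freedom: the bracket is the restriction of $(\eta,\xi)\mapsto\bar\xi\gamma^\mu\eta$, its symmetry is equivalent to $\Delta^1_\pm=+1$ on the $\pm$-sign at which $\epsilon_\pm=+1$, and reality follows from~\eqref{realityCharge} and~\eqref{35} once this sign choice matches the defining conjugation. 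A direct lookup in Lemma~\ref{propIndependence2} picks out precisely those signatures in $\{0,1,2,7\}\bmod 8$ as admissible, with the chiral/non-chiral splitting in the even cases coming from whether the real structure is chirality preserving or chirality reversing (the two columns of Table~\ref{summary}).

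Next I would handle the quaternionic case $\sigma\equiv 3,4,5\pmod 8$. Here the real spinor bundle is embedded diagonally in $S_\CCC\oplus S_\CCC$, and the $R$-symmetry $SU(2)$ (or its reduction $SO^*(2)\simeq U(1)$ in the chirality-split cases, cf.\ footnote~\ref{footR}) yields the two contractions $C_\Omega$ and $C_\delta$ of Proposition~\ref{bilinear}. The overall symmetry of the resulting bracket is the product of $\Delta^1_\pm$ with the symmetry of the $R$-tensor ($-1$ for $\Omega$, $+1$ for $\delta$), so one of the two contractions is always symmetric whenever the quaternionic structure itself exists, i.e.\ whenever $\epsilon_\pm=-1$ for some sign; reality is checked as before from~\eqref{35}. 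This contributes all signatures $\sigma\equiv 3,4,5\pmod 8$ in the allowed dimensions, producing in particular the pairs $(0,4),(4,0),(4,4),(4,6),(5,3),(5,5),(6,4),(7,3),(7,5)$ listed in the statement, as well as filling in $D\equiv 1,2,3\pmod 8$ with all signatures.

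The main obstacle will be keeping the sign bookkeeping straight. One has to match the choice of $C_\pm$ and $A_\pm$ to the conjugation or anti-conjugation $\tau$ defining the real spinor bundle in Subsection~\ref{realspinorsapp}; account for the compensating factor of $i$ in the Clifford multiplication whenever the $C_-$ branch is used; and correctly identify the chiral versus non-chiral column of Table~\ref{summary} in each even-dimensional case, since the charge conjugation either preserves or swaps the two chiralities according as $\sigma\equiv 0,4$ or $\sigma\equiv 2,6$. Once this sign bookkeeping is settled, cross-referencing the admissible $(\epsilon_\pm,\Delta^1_\pm)$-data with Table~\ref{sign} and Lemma~\ref{propIndependence2} becomes a finite table check, and reproduces exactly the list of admissible $(D,\sigma)\bmod 8$ recorded in the theorem and in Table~\ref{realsusy}.
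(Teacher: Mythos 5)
Your approach coincides with the paper's own: the proof there consists precisely of Proposition \ref{bilinear}, the symmetry relation \eqref{decide}, and the resulting finite table check recorded in Table \ref{realsusy}, organized --- as you propose --- by the real/complex/quaternionic type of the representation and by whether a sign choice exists for which existence ($\epsilon_\pm$), symmetry ($\Delta^1_\pm$), and reality are simultaneously satisfied. Two of your intermediate summaries are misstated, however, and would produce the wrong list if taken at face value. First, in the real and complex regimes admissibility is not a condition on a single residue class: it depends jointly on $D$ and $\sigma$, because the sign $\pm$ at which $\epsilon_\pm=+1$ (Table \ref{sign}, a function of $\sigma$) must coincide with a sign at which $\Delta^1_\pm=+1$ (Lemma \ref{propIndependence2}, a function of $D$). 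This is why $(4,0)$ and $(4,6)$ are admissible while $(0,6)$ and $(6,0)$ are not, so your claim that the lookup selects exactly the classes in $\{0,1,2,7\}$ does not describe the actual outcome of the cross-referencing. Second, the pairs $(4,0)$ and $(4,6)$ have $\sigma\equiv 0$ and $6$ and therefore belong to the real and complex cases, not to the quaternionic one; your observation that in the quaternionic case ($\sigma\equiv 3,4,5$) one of the contractions $C_\Omega$, $C_\delta$ can always be chosen symmetric is correct and does account for every pair with those signatures, but only for those. These are exactly the sign-bookkeeping issues you yourself flag as the main obstacle; once they are repaired, the finite check reproduces Table \ref{realsusy} and hence the list in the theorem.
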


\begin{remark}
The pairs in the list in Theorem \ref{whenreal} can be seen as minimal supersymmetries obtained by charge conjugation, i.e.\ those which arise by considering the charge conjugation and the nature of the spinor representation (real, complex or quaternionic). If we allow extended supersymmetries $(N\geq 2)$ also in the non-quaternionic cases, or if we consider constructions of bilinears by taking into account further spin invariant automorphisms resulting from the algebraic nature of the spinors, we are able to fill some of the gaps in the list above (see \cite{AlekCor2}). 
\end{remark}

In Tables \ref{complexsusy} and \ref{realsusy} we list in which dimension and signature our construction via the charge conjugation leads to supersymmetry.
We use the following abbreviations:
\begin{itemize}[leftmargin=2em]
\item$Y_{(\alpha\beta)}$ means $Y=Y^T$, $Y_{[\alpha\beta]}$ means $Y=-Y^T$
\item Yes, No denotes whether we have supersymmetry or not.
\item Chiral, non chiral denotes the type of the morphism $S\otimes S\to V$.
\item r,c or q denote the reality type of the spin representation (real, complex or quaternionic).
\item M, MW, W or D denote the type of the spinors (Majorana, Majorana-Weyl, Weyl or Dirac).
\end{itemize}
In Table \ref{realsusy} we shaded the diagonals for Lorentzian signature ($D=2-\sigma$) and Euclidean signature which is  ($D=-\sigma$). We see that in all Lorentzian cases supersymmetry is present, whereas in Euclidean signature there is no supersymmetry in $D=5$ and $6$.
\begin{table}[htb]\caption{Complex supersymmetry from charge conjugation}\label{complexsusy}\centering
$\displaystyle
{\renewcommand{\arraystretch}{1.9}
\begin{array}{c|cl||c|cl}
D=0&C_+     &\text{Yes, non chiral}\quad&\quad D=4&C_-&\text{Yes, non chiral} \\
\hline
D=1&C_+     &{\rm Yes}                      &\quad D=5&&{\rm No}\\
\hline
D=2&C_\pm&\text{Yes, chiral }       &\quad D=6&&{\rm No}\\
\hline
D=3&C_-      &{\rm Yes}                      &\quad D=7&&{\rm No}
\end{array}}
$
\end{table}

\begin{landscape}
\begin{table}[p]\caption{Real supersymmetry from charge conjugation}\label{realsusy}
${\renewcommand{\arraystretch}{1.2}\tiny
\begin{array}{c||c|c|c|c|c|c|c|c||c}
 & \sigma=0 &\sigma=1&\sigma=2&\sigma=3&\sigma=4&\sigma=5&\sigma=6&\sigma=7& \\
\LCC
& \grau && \hellgrau  & & & & & & \\
\hline\hline
D=0
 &\epsilon_+=1 & &\epsilon_+=1 & &\epsilon_-=-1 & &\epsilon_-=1 & &\\
 &(C_+^\dagger\gamma^\mu)_{(\alpha\beta)} &
 & (C_+^\dagger\gamma^\mu)_{(\alpha\beta)} &
 &(C_-^\dagger\gamma^\mu)_{[\alpha\beta]} &
 & (C_-^\dagger\gamma^\mu)_{[\alpha\beta]} & &\\
 &{\rm Yes}& &{\rm Yes} & &{\rm Yes},\ R=SU(2) & & {\rm No} & &C_\pm^T=C_\pm\\
\cline{2-2}\cline{4-4}\cline{6-6}\cline{8-8}
 &\epsilon_-=1 & &{\epsilon_-=-1 }& &\epsilon_+=-1 & &\epsilon_+=-1 & & \text{non chiral} \\
&(C_-^\dagger\gamma^\mu)_{[\alpha\beta]} &
& (C_-^\dagger\gamma^\mu)_{[\alpha\beta]}&
&(C_+^\dagger\gamma^\mu)_{(\alpha\beta)} &
 & (C_+^\dagger\gamma^\mu)_{(\alpha\beta)} & &\\
 &{\rm No}& &{\rm No} & &{\rm Yes},\ R=U(1) & & {\rm No} & &\\
\ECC
\LCC
& &\hellgrau & & & & & &\grau & \\ 
\hline
 D=1
 &&{\epsilon_+=1 }& &\epsilon_+=-1 & &\epsilon_+=-1 & &\epsilon_+=1 &\\
 &&{(C_+^\dagger\gamma^\mu)_{(\alpha\beta)}} &
 & (C_+^\dagger\gamma^\mu)_{(\alpha\beta)} &
 &(C_+^\dagger\gamma^\mu)_{(\alpha\beta)} &
 & (C_+^\dagger\gamma^\mu)_{(\alpha\beta)} &C_+^T=C_+\\
&&{\rm Yes}& &{\rm Yes},\ R=U(1) & &{\rm Yes},\ R=U(1) & & {\rm Yes} & \\
\ECC
\LCC
& \hellgrau & & & & & &\grau & & \\ 
\hline
D=2
 &{\epsilon_+=1} & &\epsilon_-=1 & &\epsilon_-=-1 & &\epsilon_+=1 & &\\
 &(C_+^\dagger\gamma^\mu)_{(\alpha\beta)} &
& (C_-^\dagger\gamma^\mu)_{(\alpha\beta)} &
 &(C_-^\dagger\gamma^\mu)_{(\alpha\beta)} &
 & (C_+^\dagger\gamma^\mu)_{(\alpha\beta)} & &\\
 &{\rm Yes}& &{\rm Yes} & &{\rm Yes},\ R=U(1) & & {\rm Yes} & &C_\pm^T=\pm C_\pm\\
\cline{2-2}\cline{4-4}\cline{6-6}\cline{8-8}
 &{\epsilon_-=1 }& &\epsilon_+=-1 & &\epsilon_+=-1 & &\epsilon_-=-1 & & \text{chiral} \\
&{(C_-^\dagger\gamma^\mu)_{(\alpha\beta)}} &
 & (C_+^\dagger\gamma^\mu)_{(\alpha\beta)} &
 &(C_+^\dagger\gamma^\mu)_{(\alpha\beta)} &
 & (C_-^\dagger\gamma^\mu)_{(\alpha\beta)} & &\\
 &{{\rm Yes}}& &{\rm No} & &{\rm Yes},\ R=U(1) & & {\rm No} & &\\
\ECC
\LCC
& && & & &\grau & &\hellgrau & \\ 
\hline
D=3
 &&\epsilon_-=1 & &\epsilon_-=-1 & &\epsilon_-=-1 & &\epsilon_-=1 &\\
 &&(C_-^\dagger\gamma^\mu)_{(\alpha\beta)} &
 & (C_-^\dagger\gamma^\mu)_{(\alpha\beta)} &
 &(C_-^\dagger\gamma^\mu)_{(\alpha\beta)} &
 & (C_-^\dagger\gamma^\mu)_{(\alpha\beta)} &C_-^T=-C_-\\
&&{\rm Yes}& &{\rm Yes},\ R=U(1) & &{\rm Yes},\ R=U(1) & & {\rm Yes} & \\
\ECC
\LCC
& & & & &\grau & &\hellgrau & & \\ 
\hline
D=4
 &\epsilon_-=1 & &\epsilon_+=1 & &\epsilon_+=-1 & &\epsilon_-=1 & &\\
 &(C_-^\dagger\gamma^\mu)_{(\alpha\beta)} &
 & (C_+^\dagger\gamma^\mu)_{[\alpha\beta]} &
 &(C_+^\dagger\gamma^\mu)_{[\alpha\beta]} &
 & (C_-^\dagger\gamma^\mu)_{(\alpha\beta)} & &\\
 &{\rm Yes}& &{\rm No} & &{\rm Yes},\ R=SU(2) & & {\rm Yes} & &C_\pm^T=-C_\pm\\
\cline{2-2}\cline{4-4}\cline{6-6}\cline{8-8}
 &\epsilon_+=1 & &\epsilon_-=-1 & &\epsilon_-=-1 & &\epsilon_+=-1 & & \text{non chiral} \\
&(C_+^\dagger\gamma^\mu)_{[\alpha\beta]} &
 & (C_-^\dagger\gamma^\mu)_{(\alpha\beta)} &
 &(C_-^\dagger\gamma^\mu)_{(\alpha\beta)} &
 & (C_+^\dagger\gamma^\mu)_{[\alpha\beta]} & &\\
 &{\rm No}& &{\rm No} & &{\rm Yes},\ R=U(1) & & {\rm No} & &\\
\ECC
\LCC
& & & &\grau & &\hellgrau & & & \\
\hline
D=5
 &&\epsilon_+=1 & &\epsilon_+=-1 & &\epsilon_+=-1 & &\epsilon_+=1 &\\
 &&(C_+^\dagger\gamma^\mu)_{[\alpha\beta]} &
 & (C_+^\dagger\gamma^\mu)_{[\alpha\beta]} &
 &(C_+^\dagger\gamma^\mu)_{[\alpha\beta]} &
 & (C_+^\dagger\gamma^\mu)_{[\alpha\beta]} &C_+^T=-C_+\\
&&{\rm No}& &{\rm Yes},\ R=SU(2) & &{\rm Yes},\ R=SU(2) & & {\rm No} & \\
\ECC
\LCC
& &&\grau & &\hellgrau & & & & \\ 
\hline
D=6
 &\epsilon_+=1 & &\epsilon_-=1 & &\epsilon_+=-1 & &\epsilon_+=1 & &\\
 &(C_+^\dagger\gamma^\mu)_{[\alpha\beta]} &
 & (C_-^\dagger\gamma^\mu)_{[\alpha\beta]} &
 &(C_+^\dagger\gamma^\mu)_{[\alpha\beta]} &
 & (C_+^\dagger\gamma^\mu)_{[\alpha\beta]} & &\\
 &{\rm No}& &{\rm No} & &{\rm Yes},\ R=SU(2) & & {\rm No} & &C_\pm^T=\mp C_\pm\\
\cline{2-2}\cline{4-4}\cline{6-6}\cline{8-8}
 &\epsilon_-=1 & &\epsilon_+=-1 & &\epsilon_-=-1 & &\epsilon_-=-1 & & \text{chiral} \\
&(C_-^\dagger\gamma^\mu)_{[\alpha\beta]} &
 & (C_+^\dagger\gamma^\mu)_{[\alpha\beta]} &
 &(C_-^\dagger\gamma^\mu)_{[\alpha\beta]} &
 & (C_-^\dagger\gamma^\mu)_{[\alpha\beta]} & &\\
 &{\rm No}& &{\rm No} & &{\rm Yes},\ R=SU(2) & & {\rm No} & &\\
\ECC
\LCC
& &\grau & &\hellgrau & & & & & \\
\hline
D=7
 &&\epsilon_-=1 & &\epsilon_-=-1 & &\epsilon_-=-1 & &\epsilon_-=1 &\\
 &&(C_-^\dagger\gamma^\mu)_{[\alpha\beta]} &
 & (C_-^\dagger\gamma^\mu)_{[\alpha\beta]} &
 &(C_-^\dagger\gamma^\mu)_{[\alpha\beta]} &
 & (C_-^\dagger\gamma^\mu)_{[\alpha\beta]} &C_-^T=C_-\\
&&{\rm No}& &{\rm Yes},\ R=SU(2) & &{\rm Yes},\ R=SU(2) & & {\rm No} & \\ \hline\hline
\ECC
&{\rm r,\ MW} &{\rm r,\ M}&{\rm c,\ M\ o.\ W}&{\rm q,\ W}&{\rm q,\ W}&{\rm q,\ D}
&{\rm c,\ M\ o.\ W}&{\rm r,\ M}&
\end{array}
}$
\end{table}
\end{landscape}


\newpage 
\section{Conventions in dimension four}\label{SecDim4}

We consider the four dimensional Euclidean space. In this case we have a quaternionic spin representation ($\epsilon_\pm=-1$) which is compatible with the chiral structure. 
Our convention for describing the quaternions $\HHH$ is
\begin{equation}\begin{aligned}
\CCC^2\ni & (a_1+ia_3,a_2+ia_4)\\ 
	&\stackrel{\simeq}{\longleftrightarrow}(a_1+Ja_3)+I(a_2+Ja_4)=a_1+a_2I+a_3J+a_4K\in\HHH\,.
\end{aligned}\end{equation}
With
\begin{equation}
\CCC^{2n}\ni(z_1,\ldots,z_n,z_{n+1},\ldots,z_{2n})\stackrel{\simeq}{\longleftrightarrow}(z_1+Iz_{n+1},\ldots,z_{n}+Iz_{2n})\in\HHH^n
\nonumber
\end{equation}
the multiplication with $I,J,K$ are given by the following complex $2n\times2n$ matrices
\begin{align}
I&=\begin{pmatrix}&-{\bf 1}\\{\bf 1}&\end{pmatrix},&
J&=\begin{pmatrix}i{\bf 1}&\\&-i{\bf 1}\end{pmatrix},&
K&=\begin{pmatrix}&i{\bf 1}\\i{\bf 1}&\end{pmatrix}.
\end{align}
The spinors are given by
\begin{equation}
S_\CCC=\CCC^4=S^+_\CCC\oplus S_\CCC^-=\HHH\oplus\HHH
\end{equation}
with $S_\CCC^+={\rm span}_\CCC\{e_1,e_3\},S_\CCC^-={\rm span}_\CCC\{e_2,e_4\}$ by the above conventions.

The $\gamma$-matrices in chiral representation are given by
\begin{equation}\begin{aligned}
\gamma_1 &=\begin{pmatrix}&-1\\1&\end{pmatrix}
=\footnotesize\begin{pmatrix}&-1&&\\1&&&\\&&&-1\\&&1&\end{pmatrix},&
\gamma_2 &=\begin{pmatrix}&I\\I&\end{pmatrix}
=\footnotesize\begin{pmatrix}&&&-1\\&&-1&\\&1&&\\1&&&\end{pmatrix},\\
\gamma_3 &=\begin{pmatrix}&J\\J&\end{pmatrix}
=\footnotesize\begin{pmatrix}&i&&\\i&&&\\&&&-i\\&&-i&\end{pmatrix},&
\gamma_4 &=\begin{pmatrix}&K\\K&\end{pmatrix}
=\footnotesize\begin{pmatrix}&&&i\\&&i&\\&i&&\\i&&&\end{pmatrix}.
\end{aligned}\end{equation}
By using the modified convention from footnote \ref{foot} the charge conjugation is given by 
\begin{equation}
C_\pm=\footnotesize \begin{pmatrix}&&1&\\&&&\mp1\\-1&&&\\ &\pm1&&\end{pmatrix}.
\end{equation}
We modify the $\mathfrak{so}$-generators using the self duality oparation $\Lambda^2\to\Lambda^2$ with $F_{ij}\mapsto\frac{1}{2}\varepsilon_{ijkl}F^{kl}$. The results $\tilde\gamma_{ij}=\frac{1}{2}\big(\gamma_{ij}-\frac{1}{2}\varepsilon_{ijkl}\gamma^{kl}\big)$ are obviously anti-self dual, i.e.\ $\tilde\gamma_{ij}=-\frac{1}{2}\varepsilon_{ijkl}\tilde\gamma^{kl}$
\begin{gather}
\tilde\gamma_{12} =
\begin{pmatrix}-I&0\\0&0\end{pmatrix}=
\begin{pmatrix}0&0&1&0\\0&0&0&0\\-1&0&0&0\\0&0&0&0\end{pmatrix}
\quad
\tilde\gamma_{13} =
\begin{pmatrix}-J&0\\0&0\end{pmatrix}=
\begin{pmatrix}-i&0&0&0\\0&0&0&0\\0&0&i&0\\0&0&0&0\end{pmatrix}
\nonumber\\
\tilde\gamma_{14} =
\begin{pmatrix}-K&0\\0&0\end{pmatrix}=
\begin{pmatrix}0&0&-i&0\\0&0&0&0\\-i&0&0&0\\0&0&0&0\end{pmatrix}\label{conn2}
\end{gather}

\end{appendix}


\begin{thebibliography}{WW}

\bibitem[1]{AlekCor2}
D.V. Alekseevsky and V.~Cortes.
\newblock Classification of {$N$}--extended {P}oincar\'e algebras and bilinear
  invariants of the spinor representation of {$Spin(p,q)$}.
\newblock {\em Comm.Math.Phys.}, 183:477--510, 1997.

\bibitem[2]{AlCorDevSem}
D.~V. Alekseevsky, V.~Cort{\'e}s, C.~Devchand, and U.~Semmelmann.
\newblock Killing spinors are {K}illing vector fields in {R}iemannian
  supergeometry.
\newblock {\em J. Geom. Phys.}, 26(1-2):37--50, 1998.

\bibitem[3]{Bat1}
Marjorie Batchelor.
\newblock The structure of supermanifolds.
\newblock {\em Trans.Am.Math.Soc.}, 253:329--338, 1979.

\bibitem[4]{BergIkem11}
L.~B\'{e}rard-Bergery and A.~Ikemakhen.
\newblock {Sur l'holonomie des vari\'{e}t\'{e}s pseudo-{R}iemanniennes de
  signature $(n,n)$. (On the holonomy of pseudo-Riemannian manifolds with
  signature $(n,n)$).}
\newblock {\em Bull.Soc.Math.Fr.}, 125(1):93--114, 1997.

\bibitem[5]{BFGK}
Helga Baum, Thomas Friedrich, Ralf Grunewald, and Ines Kath.
\newblock {\em {Twistors and Killing Spinors on Riemannian Manifolds.}}
\newblock {Teubner-Texte zur Mathematik, 124. Stuttgart etc.: B. G. Teubner
  Verlagsgesellschaft. 180 p.}, 1991.

\bibitem[6]{BlauFigHullPapa2}
Matthias Blau, Jos{\'e} Figueroa-O'Farrill, Christopher Hull, and George
  Papadopoulos.
\newblock A new maximally supersymmetric background of {IIB} superstring
  theory.
\newblock {\em J. High Energy Phys.}, 1; Paper 47:16, {\tt hep\--th 0110242},
  2002.

\bibitem[7]{BlauFigHullPapa1}
Matthias Blau, Jos{\'e} Figueroa-O'Farrill, Christopher Hull, and George
  Papadopoulos.
\newblock Penrose limits and maximal supersymmetry.
\newblock {\em Classical Quantum Gravity}, 19(10):L87--L95, 2002.

\bibitem[8]{BriGellRamSchw1}
Lars Brink, Murray Gell-Mann, Pierre Ramond, and John~H. Schwarz.
\newblock Supergravity as geometry of superspace.
\newblock {\em Phys.Lett.\ B}, 74(4--5):336--340, 1978.

\bibitem[9]{BaumKath1}
Helga Baum and Ines Kath.
\newblock {Parallel spinors and holonomy groups on pseudo-Riemannian spin
  manifolds.}
\newblock {\em Ann.Global Anal.Geom.}, 17(1):1--17, 1999.

\bibitem[10]{BruPes1}
Ugo Bruzzo and Vladimir Pestov.
\newblock On the structure of {D}e{W}itt supermanifolds.
\newblock {\em J.Geom.Phys.}, 30(2):147--168, 1999.

\bibitem[11]{BenaRoiban}
Iosif Bena and Radu Roiban.
\newblock Supergravity pp--wave solution with28 and 24 supercharges.
\newblock {\tt hep-th 0206195}, 2002.

\bibitem[12]{Bryant1}
Robert~L. Bryant.
\newblock Pseudo-{R}iemannian metrics with parallel spinor fields and vanishing
  {R}icci tensor.
\newblock In {\em Global analysis and harmonic analysis (Marseille-Luminy,
  1999)}, volume~4 of {\em {S\'emin.\ Congr.}}, pages 53--94. Soc.Math.France,
  Paris, 2000.

\bibitem[13]{ColMan}
S.~Coleman and J.~Mandula.
\newblock All possible symmetries of the {S}--matrix.
\newblock {\em Phys.Rev.}, 159:1251--1256, 1967.

\bibitem[14]{deWittBook}
Bryce DeWitt.
\newblock {\em Supermanifolds}.
\newblock Cambridge Monographs on Mathematical Physics. Cambridge University
  Press, Cambridge, second edition, 1992.

\bibitem[15]{dAuFerrLledVar}
R.~D'Auria, S.~Ferrara, M.~A. Lled{\'o}, and V.~S. Varadarajan.
\newblock Spinor algebras.
\newblock {\em J.Geom.Phys.}, 40(2):101--129, 2001.

\bibitem[16]{Fig1}
Jos{\'e} Figueroa-O'Farrill.
\newblock On the supersymmetries of anti-de {S}itter vacua.
\newblock {\em Classical Quantum Gravity}, 16(6):2043--2055, 1999.

\bibitem[17]{Fig3}
Jos{\'e} Figueroa-O'Farrill.
\newblock Breaking the {M}-waves.
\newblock {\em Classical Quantum Gravity}, 17(15):2925--2947, 2000.

\bibitem[18]{FigPapado1}
Jos{\'e} Figueroa-O'Farrill and George Papadopoulos.
\newblock Homogeneous fluxes, branes and a maximally supersymmetric solution of
  {M}-theory.
\newblock {\em J.High Energy Phys.}, 8, Paper 36:26; {\tt hep\--th 0105308v2},
  2001.

\bibitem[19]{GreenSchwarzWitten}
Michael~B. Green, John~H. Schwarz, and Edward Witten.
\newblock {\em Superstring theory. {V}ol.\ 1; {V}ol.\ 2}.
\newblock Cambridge Monographs on Mathematical Physics. Cambridge University
  Press, Cambridge, second edition, 1988.
\newblock Vol.\ 1: Introduction; Vol.\ 2: Loop amplitudes, anomalies and
  phenomenology.

\bibitem[20]{GaoTian}
Yi-hong Gao and Gang Tian.
\newblock Instantons and the monopole-like equations in eight dimensions.
\newblock {\em J.High Energy Phys.}, 5, Paper 36:23; {\tt hep\--th 0004167v2},
  2000.

\bibitem[21]{Habermann1}
Katharina Habermann.
\newblock The twistor equation on {R}iemannian manifolds.
\newblock {\em J.Geom.Phys.}, 7(4):469--488, 1990.

\bibitem[22]{Hitchin}
Nigel Hitchin.
\newblock Harmonic spinors.
\newblock {\em Advances in Math.}, 14:1--55, 1974.

\bibitem[23]{HaagLopusSohn}
Rudolf Haag, Jan T. {\L}opusza{\'n}ski, and Martin Sohnius.
\newblock All possible generators of supersymmetries of the {$S$}-matrix.
\newblock {\em Nuclear Phys. B}, 88:257--274, 1975.

\bibitem[24]{Klinker1}
Frank Klinker.
\newblock The spinor bundle of {R}iemannian products.
\newblock {\tt math.DG 0212058}, 2002.

\bibitem[25]{Klinker2}
Frank Klinker.
\newblock {\em Supersymmetric Killing Structures}.
\newblock PhD thesis, University Leipzig, Germany, 2003.

\bibitem[26]{KobNom}
Shoshichi Kobayashi and Katsumi Nomizu.
\newblock {\em Foundations of Differential Geometry. {V}ol.\ {I}, {II}}.
\newblock Wiley Classics Library. John Wiley \& Sons Inc., New York, 1996.
\newblock Reprint of the 1963 resp.\ 1969 original, A Wiley-Interscience
  Publication.

\bibitem[27]{Kostant}
Bertram Kostant.
\newblock Graded manifolds, graded {L}ie theory, and prequantization.
\newblock In {\em Differential geometrical methods in mathematical physics
  (Proc. Sympos., Univ. Bonn, Bonn, 1975)}, pages 177--306. Lecture Notes in
  Math., Vol. 570. Springer, Berlin, 1977.

\bibitem[28]{Leites1}
D.~A. Leites.
\newblock Introduction to the theory of supermanifolds.
\newblock {\em Uspekhi Mat.Nauk}, 35(1(211)):3--57, 255, 1980.

\bibitem[29]{LawMich}
H.~Blaine Lawson, Jr. and Marie-Louise Michelsohn.
\newblock {\em Spin geometry}, volume~38 of {\em Princeton Mathematical
  Series}.
\newblock Princeton University Press, Princeton, NJ, 1989.

\bibitem[30]{Michelson}
Jeremy Michelson.
\newblock A pp--wave with 26 supercharges.
\newblock {\tt hep-th 0206204v4}, 2002.

\bibitem[31]{Rogers1}
Alice Rogers.
\newblock A global theory of supermanifolds.
\newblock {\em J.Math.Phys.}, 21(6):1352--1365, 1980.

\bibitem[32]{Rogers5}
Alice Rogers.
\newblock Graded manifolds, supermanifolds and infinite-dimensional {G}rassmann
  algebras.
\newblock {\em Comm.Math.Phys.}, 105(3):375--384, 1986.

\bibitem[33]{Rothstein}
Mitchell~J. Rothstein.
\newblock The axioms of supermanifolds and a new structure arising from them.
\newblock {\em Trans. Amer. Math. Soc.}, 297(1):159--180, 1986.

\bibitem[34]{Scherk1}
J.~Scherk.
\newblock Extended supersymmetry and extended supergravity theories.
\newblock In {\em Recent Developements in Gravitation, (Carg\`ese, 1978), NATO
  Adv.\ Study Institutes Series, Ser.\ B, Vol.\ 44; Eds.\ M.\ L\'evy, S.\
  Deser}, pages 479--517. Plenum Press, New York, 1979.

\bibitem[35]{ScheunertNahm1}
M.~Scheunert, W.~Nahm, and V.~Rittenberg.
\newblock Classification of all simple graded {L}ie algebras whose {L}ie
  algebra is reductive. {I}.
\newblock {\em J. Mathematical Phys.}, 17(9):1626--1639, 1976.

\bibitem[36]{ScheunertNahm2}
M.~Scheunert, W.~Nahm, and V.~Rittenberg.
\newblock Classification of all simple graded {L}ie algebras whose {L}ie
  algebra is reductive. {II}. {C}onstruction of the exceptional algebras.
\newblock {\em J. Mathematical Phys.}, 17(9):1640--1644, 1976.

\bibitem[37]{WangMK1}
McKenzie~Y. Wang.
\newblock Parallel spinors and parallel forms.
\newblock {\em Ann.Global Anal.Geom.}, 7(1):59--68, 1989.

\bibitem[38]{WessBagger}
Julius Wess and Jonathan Bagger.
\newblock {\em {Supersymmetry and Supergravity.}}
\newblock {Princeton Series in Physics. Princeton, New Jersey: Princeton
  University Press. IX, 180 p.}, 1983.

\bibitem[39]{WessZumino2}
J.~Wess and B.~Zumino.
\newblock Supergauge transformations in four dimensions.
\newblock {\em Nuclear Phys.}, B70:39--50, 1974.

\bibitem[40]{WessZumino1}
J.~Wess and B.~Zumino.
\newblock Superspace formulation of supergravity.
\newblock {\em Phys.Lett.\ B}, 66(4):361--364, 1977.

\bibitem[41]{Yagi2}
Katsumi Yagi.
\newblock Super manifolds.
\newblock {\em Osaka J. Math.}, 25(4):909--932, 1988.

\end{thebibliography}
\end{document}